\documentclass[11pt]{amsart}
\usepackage[margin=1.15in]{geometry}
\usepackage{amscd,amssymb, amsmath, wasysym}

\usepackage{graphicx}
\usepackage{amsfonts}
\usepackage{mathrsfs}    
\usepackage{amsmath}    
\usepackage{amsthm}     
\usepackage{amscd}      
\usepackage{amssymb}    
\usepackage{eucal}      
\usepackage{latexsym}   
\usepackage{graphicx}   
\usepackage{verbatim}   
\usepackage[all,cmtip,line]{xy}    
\usepackage[colorlinks=true,linkcolor=blue,urlcolor=black,bookmarksopen=true]{hyperref}
\usepackage{bookmark}

\makeatletter

\newcounter{thmcounter}

\numberwithin{equation}{section}
\numberwithin{thmcounter}{section}

\newtheorem{theorem}[thmcounter]{Theorem}
\newtheorem{proposition}[thmcounter]{Proposition}
\newtheorem{lemma}[thmcounter]{Lemma}
\newtheorem{corollary}[thmcounter]{Corollary}

\theoremstyle{definition}
\newtheorem{definition}[thmcounter]{Definition}

\newtheorem{example}[thmcounter]{Example}

\newtheorem{remark}[thmcounter]{Remark}

\newtheoremstyle{claim}{9pt}{3pt}{}{\parindent}{\bf}{.}{1em}{}

\theoremstyle{claim}

\newenvironment{namelist}[1]{%
\begin{list}{}
{
\settowidth{\labelwidth}{#1}%
\setlength{\labelsep}{0.3em}%
\setlength{\leftmargin}{\labelwidth}%
\addtolength{\leftmargin}{\labelsep}}}{%
\end{list}}

\newcommand{\nZ}{\mathbb{Z}}                     

\newcommand{\nP}{\mathbb{P}}                     

\newcommand{\sF}{\mathscr{F}}

\newcommand{\sO}{\mathscr{O}}                    

\newcommand{\sL}{\mathscr{L}}

\newcommand{\mf}[1]{\mathfrak{#1}}

\DeclareMathOperator{\bl}{Bl}                    

\DeclareMathOperator{\coker}{Coker}              

\DeclareMathOperator{\gr}{gr}                    

\DeclareMathOperator{\id}{id}                    

\DeclareMathOperator{\pr}{pr} 

\DeclareMathOperator{\qis}{qis}                  

\DeclareMathOperator{\Supp}{Supp}                
\DeclareMathOperator{\Sym}{Sym}                  
\DeclareMathOperator{\sHom}{\mathscr{H}om}       

\DeclareMathOperator{\Proj}{Proj}                

\newcommand*{\longhookrightarrow}{\ensuremath{\lhook\joinrel\relbar\joinrel\rightarrow}}

\newcounter{rkcounter}             
\begin{document}

\title[Local and global properties of secants of curves]{Some local and global properties of secant varieties of nonsingular projective curves}

\author{Lawrence Ein}
\address{Department of Mathematics, University Illinois at Chicago, 851 South Morgan St.,
Chicago, IL 60607, USA}
\email{ein@uic.edu}

\author{Wenbo Niu}
\address{Department of Mathematical Sciences, University of Arkansas, Fayetteville, AR 72701, USA}
\email{wenboniu@uark.edu}

\author{Jinhyung Park}
\address{Department of Mathematical Sciences, KAIST, 291 Daehak-ro, Yuseong-gu, Daejeon 34141, Republic of Korea}
\email{parkjh13@kaist.ac.kr}

\subjclass[2010]{14N07, 14B05, 14F06}

\keywords{secant variety, projective curve, symmetric product of an algebraic curve, singularities of an algebraic variety, tangent cone}

\date{\today}

\begin{abstract} 
The main goal of this paper is to study some local and global properties of secant varieties of algebraic curves. These results complement our previous work \cite{ENP} by addressing issues given therein and providing solutions to problems raised subsequently. Specifically, we show a description of tangent cones of secant varieties of curves, and compute the cohomology groups of secant sheaves on symmetric products of curves, which answers a question posed in \cite{ENP} and leads to a recursive formula for Hilbert polynomials of secant varieties of curves. In the appendix, we present a cohomological approach to arithmetical Cohen--Macaulayness of secant varieties of curves, completing the proof in \cite{ENP}.
\end{abstract}

\maketitle


\section{Introduction}

\noindent Throughout the paper, we work over an algebraically closed field $\Bbbk$ of characteristic zero.
 Let $C$ be a nonsingular projective curve of genus $g\geq 0$, and let $L$ be a very ample line bundle determining an embedding 
 $$
 C\longhookrightarrow \nP^r:=\nP(H^0(L)), \text{ where }r:=r(L)=h^0(L)-1.
 $$
 Assume that $\deg L\geq 2g+2k+1$ for an integer $k\geq 0$, and consider the \emph{$k$-th secant variety} 
 $$
 \Sigma_k=\Sigma_k(C, L) \subseteq \nP^r
 $$
 of $C \subseteq \nP^r$ defined as the union of $(k+1)$-secant $k$-planes to $C$. Note that $\dim \Sigma_k = 2k+1$ and the singular locus of $\Sigma_k$ is $\Sigma_{k-1}$ unless $(C, L) = (\nP^1, \sO_{\nP^1}(2k+1))$ in which case $\Sigma_k = \nP^{2k+1}$. It has attracted considerable attention in last two decades on understanding the geometry of secant varieties (see e.g., \cite{CKP, CS, CR, ENP, RS, SV, Ullery}). In our previous work \cite{ENP}, we studied singularities and syzygies of secant varieties of curves by settling several conjectures in this area. In particular, we proved that $\Sigma_k$ has normal Du Bois singularities, $\Sigma_k \subseteq \nP^r$ is projectively normal, and $H^i(\Sigma_k, \sO_{\Sigma_k}(\ell))=0$ for $i > 0$ and $\ell > 0$. The necessary basic known facts on secant varieties of curves are reviewed in Section \ref{sec:prelim}. In the sequel, we look at further local and global properties of secant varieties of curves, which are complementary results to \cite{ENP}. 
 
\medskip

The first result that we establish in this paper is the description of tangent cones of secant varieties of curves. The classical Terracini's lemma (see e.g., \cite[Theorem 1.1]{CR}) determines the tangent spaces of secant varieties at general points. In our situation, if $x \in \Sigma_k \setminus \Sigma_{k-1}$, then there exists a unique effective divisor $\xi \in C_{k+1}$ of degree $k+1$ such that $x \in \langle \xi \rangle$. Then Terracini lemma says that the projective tangent space to $\Sigma_k$ at $x$ is $\langle 2\xi \rangle$. 
If $x \in \Sigma_m \setminus \Sigma_{m-1}$ in $\Sigma_k$, then there exists a unique effective divisor $\xi \in C_{m+1}$ such that $x \in \langle \xi \rangle$. We actually prove that the ``projective'' tangent cone to $\Sigma_k$ at $x$ is a cone over the secant variety $\Sigma_{k-m-1}(C, L(-2\xi))$ with the vertex $\langle 2\xi \rangle$ (see Theorem \ref{thm:01}, in which the ``projectivized'' tangent cone is described). Our result naturally generalizes the classical Terracini's lemma. As an application, we give a quick proof of the fact that $\Sigma_k$ is Cohen--Macaulay (see Corollary \ref{cor:02}). This readily implies that $\Sigma_k \subseteq \nP^r$ is arithmetically Cohen--Macaulay (see Corollary \ref{cor:Sigma=aCM}), as was asserted in \cite[Theorem 5.8]{ENP}.
Along the way, we also show that all cones over secant varieties of curves have normal Cohen--Macaulay Du Bois singularities (see Theorem \ref{thm:coneisDB}). This answers a question of Karen Smith (see Remark \ref{rem:KS}).

\medskip

Our second result is the computation of the cohomology groups of symmetric powers of secant sheaves on symmetric products of algebraic curves. Essentially, secant varieties of curves are the images of secant bundles over symmetric products of curves, which are the projectivizations of secant sheaves. It was asked in \cite[Problem 6.2]{ENP} that if one can explicitly determine the cohomology groups of symmetric powers of those secant sheaves. We are able to answer this question using the higher direct images of the structure sheaves of the secant bundles (see Theorem \ref{thm:cohomologysecantsheaf}). It is common to investigate the properties of secant varieties through symmetric products of curves. However, in this case, the approach is reversed: we derive properties of symmetric products of curves from secant varieties. In the process, we establish a recursive formula for the Hilbert polynomials of secant varieties of curves (see Theorem \ref{thm:hilbertpolynomial}). For the completeness, we also compute the cohomology groups of exterior powers of secant sheaves (see Theorem \ref{thm:cohwedgesecsheaf}).

    

\medskip

In the appendix of this paper (Appendix \ref{sec:appendix}), we present a cohomological approach to arithmetical Cohen--Macaulayness of secant varieties of curves along the line of \cite[Theorem 5.8]{ENP} which asserts that $\Sigma_k \subseteq \nP^r$ is arithmetically Cohen--Macaulay. What we did in the proof is to show the cohomology vanishing  
\begin{equation}\label{eq:ACM}
	H^i(\Sigma_k, \sO_{\Sigma_k}(-\ell)) = 0~~\text{ for $1 \leq i \leq 2k$ and $\ell \geq 0$},
\end{equation}
which is equivalent to the arithmetical Cohen--Macaulayness since we already established that $\Sigma_k \subseteq \nP^r$ is projectively normal and $H^i(\Sigma_k, \sO_{\Sigma_k}(\ell))=0$ for $i > 0$ and $\ell > 0$. To achieve (\ref{eq:ACM}), we applied  \cite[Theorem 10.42]{K} to verify (\ref{eq:ACM}) only for the case of $\ell = 0,1$. Indeed, \cite[Theorem 10.42]{K} states that if $X$ is a projective pure dimensional scheme with Du Bois singularities and $A$ is an ample line bundle on $X$, then $h^i(X, A^{-\ell}) = h^i(X, A^{-1})$ for $i< \dim X$ and $\ell \geq 1$. However, this statement  contains a crucial typo: the equality $h^i(X, A^{-\ell}) = h^i(X, A^{-1})$  should be replaced by the inequality $h^i(X, A^{-\ell}) \geq h^i(X, A^{-1})$. Because of this, our proof is incomplete since we only proved (\ref{eq:ACM}) for $\ell=0,1$. This was pointed out by Doyoung Choi, to whom we would like to thank. In the appendix, we give a complete proof without quoting \cite[Theorem 10.42]{K}. Since our description of the tangent cones of secant varieties of curves also proves that secant varieties of curves are arithmetically Cohen–Macaulay (see Corollary \ref{cor:Sigma=aCM}), we have given two essentially different proofs of this statement.

\medskip

\noindent \textbf{Acknowledgment}.
We would like to thank Doyoung Choi and Karen Smith for providing some motivation for this paper.

\section{Symmetric products and secant varieties of curves}\label{sec:prelim}

\noindent In this section, we set up the notations used throughout the paper, and  briefly review facts on symmetric products, secant sheaves, secant bundles, and secant varieties of algebraic curves. For details, we refer the reader to the work \cite{ENP}. 

\medskip

We henceforth fix a nonsingular projective curve $C$ of genus $g \geq 0$. For an integer $k\geq 1$, we write $C_k$ for the $k$-th symmetric product of $C$, and write $C^k$ for the $k$-th  Cartesian (or ordinary) product of $C$. Consider the canonical morphism
$$
\sigma_{k+1} \colon C_{k} \times C\longrightarrow C_{k+1}
$$
sending $(\xi,x)$ to $\xi+x$. It is a finite flat surjective morphism of degree $k+1$.

\begin{definition}[{\cite[Definitions 3.1 and 3.10]{ENP}}]\label{def:secsheaf}
	For an integer $k\geq 1$, let $\pr_2 \colon C_k \times C \rightarrow C$ be the projection to $C$.
	For a line bundle $L$ on $C$, the \emph{secant sheaf} on $C_{k+1}$ associated to $L$\footnote{This is also known as the \emph{tautological bundle} on $C_{k+1}$ associated to $L$.} is defined to be 
	$$
	E_{k+1,L}:=\sigma_{k+1,*}(\pr_2^*L)=\sigma_{k+1,*} (\sO_{C_k} \boxtimes L),
	$$ 
	and the \emph{secant bundle} (of $k$-planes) over $C_{k+1}$  is defined to be
	$$
	B^{k}(L):=\nP(E_{k+1,L})
	$$ 
	equipped with the natural projection $\pi_k \colon B^{k}(L) \rightarrow C_{k+1}$. If the line bundle $L$ is clear from the context, then we sometimes simply write $B^k$ instead of $B^k(L)$.
\end{definition}

Since $\sigma_{k+1}$ is a finite flat surjective morphism of degree $k+1$, it is elementary to see that the secant sheaf $E_{k+1, L}$ is locally free of rank $k+1$. We denote $\delta_{k+1}$ the Cartier divisor on $C_{k+1}$ such that
$$\sO_{C_{k+1}}(-\delta_{k+1})=\det E_{k+1,\sO_C}.$$

\begin{definition}[{\cite[Definition 3.2]{ENP}}]  
For a line bundle $L$ on $C$, we define a line bundle $T_{k+1}(L)$\footnote{In some papers (e.g., \cite{Agostini:SecnatConj, CKP}) following \cite{ENP}, the line bundle $T_{k+1}(L)$ is denoted by $T_{k+1,L}$ or $S_{k+1,L}$.} on $C_{k+1}$ to be the invariant descend of the $(k+1)$-fold box product   
$$
L^{\boxtimes (k+1)}:=\underbrace{L\boxtimes  \cdots \boxtimes L}_{\text{$k+1$ times}}
$$ 
from the Cartesian product $C^{k+1}$ to the symmetric product $C_{k+1}$ under the action of the symmetric group $\mathfrak{S}_{k+1}$. We also define two more line bundles:
$$
N_{k+1,L}:=\det E_{k+1,L}~~\text{ and }~~A_{k+1,L}:=T_{k+1,L}(-2\delta_{k+1}).
$$
\end{definition}

Several properties of line bundles $T_{k+1}(L)$, $ N_{k+1,L}$, and $A_{k+1,L}$ have been discussed and proved in \cite{ENP}. Among others, we recall that $N_{k+1, L} = T_{k+1}(L)(-\delta_{k+1})$ and $\omega_{C_{k+1}} = N_{k+1,\omega_C}$ (see \cite[Remark 3.3]{ENP}). The cohomology groups of these two line bundles are well studied and summarized in the following lemma. 

\begin{lemma}[{\cite[Lemma 2.4]{Agostini:SecnatConj}, \cite[Lemma 3.7]{ENP}}]\label{lm:03}
	For any $i \geq 0$, we have
	\begin{align*}
		&H^i(C_m, N_{m,L}) = \wedge^{m-i} H^0(C, L) \otimes S^i H^1(C, L) \text{ and } \\
		&H^i(C_m, T_{m}(L)) = S^{m-i} H^0(C, L) \otimes \wedge^i H^1(C, L). &
	\end{align*}
\end{lemma}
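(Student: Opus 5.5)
The plan is to pull both line bundles back to the Cartesian product $C^m$ along the quotient map $q\colon C^m\to C_m$ and compute there by Künneth, then take invariants under the symmetric group $\mathfrak{S}_m$. Since $q$ is finite and we are in characteristic zero, for any line bundle $M$ on $C_m$ we have $H^i(C_m,M)=H^i(C^m,q^*M)^{\mathfrak{S}_m}$, because $M$ is the $\mathfrak{S}_m$-invariant direct summand of $q_*q^*M$.

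For $T_m(L)$, by definition $q^*T_m(L)=L^{\boxtimes m}$ with the permutation linearization. Künneth gives
\[
H^i(C^m,L^{\boxtimes m})=\bigoplus_{i_1+\cdots+i_m=i}H^{i_1}(L)\otimes\cdots\otimes H^{i_m}(L),
\]
with each $i_j\in\{0,1\}$. Here $\mathfrak{S}_m$ acts by permuting the tensor factors, with the Koszul sign rule for the degree-one classes. The invariant part is $(H^0(L)\oplus H^1(L)[-1])^{\otimes m}$ taken in super-symmetric degree $m$. In cohomological degree $i$ this is $S^{m-i}H^0(L)\otimes\wedge^iH^1(L)$, which proves the second formula.

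For $N_{m,L}=T_m(L)(-\delta_m)$, I would use that $q^*\sO(-\delta_m)$ is $\sO_{C^m}$ twisted by the sign character. Equivalently, $q^*\delta_m$ is half the big diagonal, and the invariant sections of $\sO(-\delta_m)$ correspond to the anti-invariant functions. To justify this, I would take $N_{m,L}=\det E_{m,L}$ and note that $q^*E_{m,L}$ maps to $\bigoplus_j \pr_j^*L$, injectively with cokernel supported on the diagonals. The determinant therefore maps to $L^{\boxtimes m}$, and $\mathfrak{S}_m$ acts on the wedge with a sign. It follows that $H^i(C_m,N_{m,L})$ is the sign-isotypic part of the Künneth decomposition above. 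With the Koszul signs, the sign-isotypic part is the super-exterior power, which in cohomological degree $i$ is $\wedge^{m-i}H^0(L)\otimes S^iH^1(L)$. This gives the first formula.

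The main obstacle is the sign-twist claim. We need $q^*N_{m,L}\cong L^{\boxtimes m}\otimes\chi_{\mathrm{sgn}}$ as $\mathfrak{S}_m$-linearized bundles, and not merely some twist of it along the diagonal. I would verify this by comparing, on an open set, the natural map $q^*\det E_{m,L}\to\det\bigoplus_j\pr_j^*L$. The pulled-back determinant picks up exactly the Vandermonde-type vanishing along the diagonal, namely $q^*\delta_m=\frac12 q^*(\text{branch divisor})$. Dividing by that vanishing gives an isomorphism onto the sign-twisted $L^{\boxtimes m}$, and the equivariance follows because transpositions act by $-1$ on the wedge of the permuted summands.
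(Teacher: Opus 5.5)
The paper gives no proof of its own here; it only cites Agostini and Ein--Niu--Park. Your overall strategy, K\"{u}nneth on $C^m$ plus isotypic components, is the standard one. The $T_m(L)$ half is correct as written, including the Koszul-sign bookkeeping.

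The gap is in the $N_{m,L}$ half. The claim that $q^*\sO_{C_m}(-\delta_m)$ is $\sO_{C^m}$ twisted by the sign character is false, and so is your stated target $q^*N_{m,L}\cong L^{\boxtimes m}\otimes\chi_{\mathrm{sgn}}$. Your own Vandermonde computation shows that $q^*\delta_m$ is linearly equivalent to the whole reduced big diagonal $D=\sum_{i<j}\Delta_{ij}$. (It is $\delta_m$ that is half of the branch divisor in $C_m$.) Hence $q^*N_{m,L}\cong L^{\boxtimes m}(-D)\otimes\chi_{\mathrm{sgn}}$, and $\sO_{C^m}(-D)$ is not trivial, since it has no nonzero global sections. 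You cannot ``divide by the vanishing'': the Vandermonde is a section of $\sO_{C^m}(D)$, not an invertible function. Consequently your formula $H^i(C_m,M)=H^i(C^m,q^*M)^{\mathfrak{S}_m}$, applied to $M=N_{m,L}$, only computes invariants in $H^i(C^m,L^{\boxtimes m}(-D)\otimes\chi_{\mathrm{sgn}})$. That is a different group from $H^i(C^m,L^{\boxtimes m})$, so the sentence ``it follows that $H^i(C_m,N_{m,L})$ is the sign-isotypic part'' does not follow from what you wrote.

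What is missing is that the correct statement concerns push-forwards rather than pull-backs: $N_{m,L}\cong (q_*L^{\boxtimes m})^{\mathrm{sgn}}$. To prove it, push forward the equivariant injection $q^*N_{m,L}\hookrightarrow L^{\boxtimes m}\otimes\chi_{\mathrm{sgn}}$, whose cokernel is $(L^{\boxtimes m}\otimes\chi_{\mathrm{sgn}})|_D$. Then take $\mathfrak{S}_m$-invariants; this is exact in characteristic zero and returns $N_{m,L}$ on the left. It remains to see that the cokernel has no invariant local sections. The transposition $(ij)$ fixes $\Delta_{ij}$ pointwise and acts trivially on the fibres of $L^{\boxtimes m}$ there, because it swaps two copies of the same one-dimensional space. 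After the sign twist it therefore acts by $-1$, so an invariant section vanishes on each $\Delta_{ij}$, hence on the reduced divisor $D$. Equivalently, anti-invariant local sections of $L^{\boxtimes m}$ are divisible by the Vandermonde. This gives $H^i(C_m,N_{m,L})=H^i(C^m,L^{\boxtimes m})^{\mathrm{sgn}}$, and your super-exterior-power computation then finishes the proof.

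Alternatively, you can bypass the sign analysis entirely. The facts $N_{m,L}=T_m(L)(-\delta_m)$ and $\omega_{C_m}=N_{m,\omega_C}$, recalled in the paper just before the lemma, give $\omega_{C_m}\otimes N_{m,L}^{-1}\cong T_m(\omega_C\otimes L^{-1})$. Serre duality on $C_m$ and on $C$ then converts the $T$-formula for $\omega_C\otimes L^{-1}$ in degree $m-i$ into the $N$-formula for $L$ in degree $i$.
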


The space of global section $H^0(C_{k+1},E_{k+1, L})$ turns out to be the same as  $H^0(C, L)$, and the fiber $E_{k+1,L}\otimes \Bbbk(\xi)$ at a point $\xi \in C_{k+1}$ can be identified with $H^0(\xi, L|_{\xi})$. 

\medskip

From now on, we always assume that $L$ is a given line bundle on $C$ whose degree satisfies the condition 
$$
\deg L\geq 2g+2k+1
$$
unless otherwise stated.
Then $L$ is very ample, so it gives an embedding
$$
 C\longhookrightarrow \nP^r:=\nP(H^0(L)), \text{ where }r:=r(L)=h^0(L)-1.
$$
In fact, the line bundle $L$ separates $2k+2$ points (or we may say that $L$ is $(2k+1)$-very ample), namely, the restriction map 
$$
H^0(C, L)\longrightarrow H^0(\xi, L|_{\xi})
$$
is surjective for all $\xi \in C_{2k+2}$. This, in particular, implies that the tautological line bundle $\sO_{B^k(L)}(1)$ of $B^k(L) = \nP(E_{k+1,L})$ is globally generated, and therefore, it induces a morphism 
$$
\beta_k \colon B^k(L)\longrightarrow \nP^r.
$$
The \emph{$k$-th secant variety} $\Sigma_k=\Sigma_k(C,L)$ of $C$ embedded by $L$ is the image of the morphism $\beta_k$. Because of degree assumption on $L$, the morphism $\beta_k$ is actually a resolution of singularities of the secant variety $\Sigma_k$. Geometrically, the $k$-th secant variety $\Sigma_k$ is the variety swept out by the $(k+1)$-secant $k$-planes to $C$ in $\nP^r$. In what follows, $\Sigma_k$ always means the $k$-th secant variety $\Sigma_k(C, L)$ of the curve $C$ embedded by the given line bundle $L$ with $\deg L \geq 2g+2k+1$. Similarly, $\Sigma_m$ is the $m$-th secant variety $\Sigma_m(C, L)$ of $C$ embedded by $L$ for every $0 \leq m \leq k-1$. Sometimes, we need to consider secant varieties of $C$ embedded by another line bundle $\mathscr{L}$. In this case, to specify the embedding line bundle $\mathscr{L}$, we denote by $\Sigma_m(\mathscr{L})$ the $m$-th secant variety of $C$ embedded by $\mathscr{L}$.

\medskip

The $k$-th secant variety $\Sigma_k$ is stratified by the lower order secant varieties, i.e., 
$$C=\Sigma_0\subseteq \Sigma_k\subseteq \Sigma_2\subseteq\cdots\subseteq \Sigma_k.$$
From this stratification, we observe that for a closed point $x\in \Sigma_k$, there exits a unique integer $0 \leq m\leq k$ such that $x\in \Sigma_m \setminus \Sigma_{m-1}$. Note that each $(m+1)$-secant $m$-plane to $C$ in $\nP^r$ is spanned by a degree $m+1$ effective divisor on $C$. Thus there exists a unique degree $m+1$ effective divisor $\xi_{m+1, x}$ on $C$  such that $x$ is in the $(m+1)$-secant $m$-plane spanned by $\xi_{m+1, x}$.  We call $\xi_{m+1, x}$ the \emph{degree $m+1$ divisor on $C$ determined by $x$} (see \cite[Definition 3.12]{ENP}). Write
$$
F_x:=\beta_k^{-1}(x)
$$
for the fiber of $\beta_k$ over the point $x$. The geometry of the fiber $F_x$, especially, the conormal bundle $N_{F_x/B^k}$ inside $B^k=B^k(L)$, have been well studied in \cite[Section 3]{ENP}, and we refer the reader to there for details. Here we recall from \cite[Proposition 3.13]{ENP} that  
$$
F_x \cong C_{k-m}~~\text{ and }~~N_{F_x/B^k}^* \cong \sO_{F_x}^{\oplus 2m+1} \oplus E_{k-m, L(-2\xi_{m+1,x})}. 
$$

\medskip

For $0 \leq m\leq k$, there is a natural morphism 
$$
\alpha_{k,m} \colon B^m(L)\times C_{k-m}\longrightarrow B^k(L)
$$
whose construction is recalled here.  For any $\xi_{m+1} \in C_{m+1}$ and $\xi_{k-m} \in C_{k-m}$, let $\xi:=\xi_{m+1} + \xi_{k-m} \in C_{k+1}$. 
Note that the $(m+1)$-secant $m$-plane $\nP(H^0(L|_{\xi_{m+1}}))$ spanned by $\xi_{m+1}$ is naturally embedded in the $(k+1)$-secant $k$-plane $\nP(H^0(L|_{\xi}))$ spanned by $\xi$. 
The map $\alpha_{k,m}$ embeds $\nP(H^0(L|_{\xi_{m+1}}))\times \{\xi_{k-m}\}$ into $\nP(H^0(L|_{\xi}))$. The image of the morphism $\alpha_{k,m}$ is called the \emph{relative secant variety of $m$-planes} in $B^k(L)$ and denoted by $Z_m^k$. If the number $k$ is clear from the context, then we simply write $Z_m$ instead of $Z^k_m$. Notice that $Z_{k-1} = \beta_k^{-1}(\Sigma_{k-1})$ is a prime divisor on $B^k(L)$ such that $\sO_{B^k(L)}(Z_{k-1}) = \sO_{B^k(L)}(k+1) \otimes \pi_k^* A_{k,L}^{-1}$ (see \cite[Proposition 3.15]{ENP}). It can be shown that the restricted morphism $\beta_k|_{B^k(L)\setminus Z_{k-1}} \colon B^k(L) \setminus Z_{k-1} \to \Sigma_k \setminus \Sigma_{k-1}$ is an isomorphism. This in particular implies that the singular locus of $\Sigma_k$ is exactly $\Sigma_{k-1}$.

\medskip

The morphism $\alpha_{k,m}$ is compatible with the morphisms $\beta_k$ and $\beta_m$, so one has a commutative diagram
$$
\xymatrix{
	B^m(L)\times C_{k-m} \ar[d]_{\pr_1} \ar[r]^-{\alpha_{m,k}} & B^k(L)\ar[d]^{\beta_{k}}\\
	B^m(L) \ar[r]_-{\beta_{m}} &\nP(H^0(L)),}
$$
where $\pr_1$ is the projection. Recall also that there is a natural projection $\pi_k \colon B^k(L)\rightarrow C_{k+1}$ in the definition of $B^k(L)$. We have the following fact on the dualizing sheaves, 
$$\pi^*_kT_{k+1}(\omega_C)=\omega_{B^k(L)}(Z_{k-1}),$$
which  can be found in \cite[(5.13)]{ENP}. 

\medskip

We now recall some further basic properties of secant varieties proved in \cite{ENP}. 

\begin{theorem}[{\cite[Theorem 5.2 and Proposition 5.4]{ENP}}]\label{p:01}
One has the following:
	\begin{enumerate}
		\item $\Sigma_k$ has normal Du Bois singularities.
		\item $\Sigma_k \subseteq \nP^r$ is projectively normal.
		\item $H^i(\Sigma_k, \sO_{\Sigma_k}(\ell))=0$ for $i > 0$ and $\ell > 0$. 
	\end{enumerate}
\end{theorem}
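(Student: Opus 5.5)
This statement collects results of \cite{ENP}, so the plan is to reconstruct the argument there rather than invent a new one. The key tool is the resolution $\beta_k \colon B^k \to \Sigma_k$. The first and central step is to show
$$
\beta_{k,*}\sO_{B^k} = \sO_{\Sigma_k} \quad\text{and}\quad R^i\beta_{k,*}\sO_{B^k} = 0 \ \text{ for } i>0,
$$
and also that $R^i\beta_{k,*}\sO_{B^k}(-Z_{k-1}) = 0$ for $i>0$, with $\beta_{k,*}\sO_{B^k}(-Z_{k-1}) = \mathscr{I}_{\Sigma_{k-1}/\Sigma_k}$.

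I would argue by induction on $k$ via the theorem on formal functions. The fiber is $F_x \cong C_{k-m}$, with conormal bundle $N^*_{F_x/B^k} \cong \sO^{\oplus 2m+1} \oplus E_{k-m, L(-2\xi)}$. Vanishing on infinitesimal neighborhoods then reduces to
$$
H^i\bigl(C_{k-m}, \Sym^a(N^*_{F_x/B^k})\bigr) = 0 \quad\text{for } i>0.
$$
In turn this reduces to the vanishing of $H^i(C_{k-m}, \Sym^a E_{k-m,L(-2\xi)})$ for $i>0$. Here $\deg L(-2\xi) \ge 2g+2(k-m)-1$, and I would get this vanishing from $H^i(B^{k-m-1}(L(-2\xi)), \sO(a))$ by induction, or from Lemma \ref{lm:03}-type computations. \textbf{This is the main obstacle:} controlling the cohomology of symmetric powers of secant sheaves with the degree bound only just sufficient.

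Normality then follows from $\beta_{k,*}\sO_{B^k} = \sO_{\Sigma_k}$, since $B^k$ is smooth and the fibers are connected. For the Du Bois property I would use the criterion that an isomorphism outside $\Sigma_{k-1}$, together with the quasi-isomorphism
$$
\mathscr{I}_{\Sigma_{k-1}} \simeq R\beta_{k,*}\sO_{B^k}(-Z_{k-1}),
$$
shows that $\Sigma_k$ is Du Bois whenever $\Sigma_{k-1}$ is. This uses the fact that $Z_{k-1}$ is a divisor whose structure is built from $B^{k-1}\times C$ via $\alpha_{k,k-1}$, so that $Z_{k-1}$ is Du Bois (it has snc-type singularities, or one argues by induction using $\Sigma_{k-1}$). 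The base case is $\Sigma_0 = C$.

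For (3), I would use Leray together with $R\beta_{k,*}\sO = \sO_{\Sigma_k}$ to get
$$
H^i(\Sigma_k, \sO(\ell)) = H^i(B^k, \sO_{B^k}(\ell)) = H^i(C_{k+1}, \Sym^\ell E_{k+1,L}).
$$
I would then prove the vanishing by induction using the divisor sequence
$$
0 \to \sO_{B^k}(\ell)(-Z_{k-1}) \to \sO_{B^k}(\ell) \to \sO_{Z_{k-1}}(\ell) \to 0.
$$
Alternatively, I would write $\sO(\ell) = \omega_{B^k} \otimes (\text{big and nef})$ using $\omega_{B^k}(Z_{k-1}) = \pi_k^* T_{k+1}(\omega_C)$ and then apply Kawamata--Viehweg vanishing.

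For (2), projective normality, I would check surjectivity of $S^\ell H^0(L) \to H^0(B^k, \sO(\ell))$. This is done by induction on $\ell$ and $k$ through the same sequence, using the vanishing of $H^1(\mathscr{I}_{\Sigma_{k-1}}(\ell))$ and the projective normality of $\Sigma_{k-1}$.
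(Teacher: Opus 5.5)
Your central step is false once $g\geq 1$ and $k\geq 1$: $R^i\beta_{k,*}\sO_{B^k}$ does not vanish for $i>0$. Theorem~\ref{thm:cohomologysecantsheaf} of this paper computes $R^i\beta_{k,*}\sO_{B^k(L)}=\wedge^iH^1(C,\sO_C)\otimes\sO_{\Sigma_{k-i}}$ for $0\leq i\leq k$. There is also an elementary obstruction. $B^k$ is a $\nP^k$-bundle over the $(k+1)$-dimensional $C_{k+1}$, so $H^{2k+1}(B^k,\sO_{B^k})=0$, while $h^{2k+1}(\Sigma_k,\sO_{\Sigma_k})=h^0(\Sigma_k,\omega_{\Sigma_k})=\binom{g+k}{k+1}\neq 0$. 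This is exactly why the statement claims Du Bois and not rational singularities.

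Your formal-function reduction already breaks at the first step, because $\Sym^a N^*_{F/B^k}$ has trivial summands and $H^i(F,\sO_F)=H^i(C_{k-m},\sO_{C_{k-m}})=\wedge^iH^1(C,\sO_C)$. The auxiliary vanishing you hoped for fails no matter how large the degree is, e.g.\ $H^1(C_2,E_{2,M})=H^1(C\times C,\sO_C\boxtimes M)\supseteq H^1(C,\sO_C)\otimes H^0(C,M)$. The same error sinks your proof of (3). By Theorem~\ref{thm:cohomologysecantsheaf}(2), $H^i(C_{k+1},\Sym^\ell E_{k+1,L})=H^i(B^k,\sO_{B^k}(\ell))=\wedge^iH^1(C,\sO_C)\otimes H^0(\Sigma_{k-i},\sO_{\Sigma_{k-i}}(\ell))$, which is nonzero for $1\leq i\leq\min(k,g)$. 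So it cannot equal $H^i(\Sigma_k,\sO_{\Sigma_k}(\ell))$, and no Kawamata--Viehweg argument can kill it.

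The route of \cite{ENP} that the paper relies on (cf.\ Proposition~\ref{p:21}(1) and the proof of Lemma~\ref{lem:gammaisomorphism}) uses only $\beta_{k,*}\sO_{B^k}=\sO_{\Sigma_k}$ about $\sO_{B^k}$. It obtains this not from a vanishing but from surjectivity of $\Sym^\ell T^*_x\nP^r\to H^0(F,\Sym^\ell N^*_{F/B^k})$. Via $N^*_{F/B^k}=\sO_F^{\oplus 2m+1}\oplus E_{k-m,L(-2\xi)}$, this reduces to projective normality of $\Sigma':=\Sigma_{k-m-1}(L(-2\xi))$. So (1) and (2) must be proved in a single induction; connected fibres alone only identify $\beta_{k,*}\sO_{B^k}$ with the structure sheaf of the normalization.

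The vanishing that is true is the twisted one you listed second, $R^i\beta_{k,*}\sO_{B^k}(-Z_{k-1})=0$ for $i>0$. Restrict $\sO_{B^k}(Z_{k-1})\cong\sO_{B^k}(k+1)\otimes\pi_k^*A_{k+1,L}^{-1}$ to $F$, where $\sO_{B^k}(1)$ is trivial; this gives $\sO_{B^k}(-Z_{k-1})|_F\cong A_{k-m,L(-2\xi)}$. The graded pieces then become $H^i(C_{k-m},\Sym^jE_{k-m,L(-2\xi)}\otimes A_{k-m,L(-2\xi)})=H^i(\Sigma',\mathscr{I}_{\Sigma''/\Sigma'}(j+k-m))$, with $\Sigma'':=\Sigma_{k-m-2}(L(-2\xi))$, and these vanish for $i>0$ by induction.

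Accordingly, (3) has to go through $H^i(\Sigma_k,\mathscr{I}_{\Sigma_{k-1}/\Sigma_k}(\ell))=H^i(B^k,\sO_{B^k}(\ell)(-Z_{k-1}))$ and the ideal sequence of $\Sigma_{k-1}\subseteq\Sigma_k$, never through $H^i(B^k,\sO_{B^k}(\ell))$. For (2), the real content, which your sketch does not address, is that degree-$\ell$ forms vanishing on $\Sigma_{k-1}$ surject onto $H^0(B^k,\sO_{B^k}(\ell)(-Z_{k-1}))$. Lastly, the Du Bois criterion you invoke also needs $Z_{k-1}$ itself to be Du Bois. For $k\geq 2$ it is not even normal (two branches meet along $Z_{k-2}$), so calling it ``snc-type'' is an assertion, not an argument.
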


There is a Du Bois-type condition given in \cite[Theorem 5.2]{ENP}. We reformulate it in terms of complexes. As usual, a coherent sheaf $\sF$ on a variety can be considered as a complex with $\sF$ as a degree zero term in the derived category.

\begin{proposition}\label{p:21} 
One has the following:
	\begin{enumerate}
		\item $R\beta_{k,*}\sO_{B^k(L)}(-Z_{k-1})\cong_{\qis} I_{\Sigma_{k-1}/\Sigma_k}$.
		\item $H^i(C_{k+1},S^\ell E_{k+1,L}\otimes T_{k+1}(\omega_C))=H^{2k+1-i}(\Sigma_{k}, I_{\Sigma_{k-1}/\Sigma_{k}}(-\ell))^*$,  for $i\geq 0$ and $\ell\geq 0$.
		\item There is an exact triangle
		\begin{equation}\label{eq:tri}
			\omega^\bullet_{\Sigma_{k-1}}\longrightarrow \omega^\bullet_{\Sigma_k}\longrightarrow R\beta_{k,*}\omega_{B^k(L)}(Z_{k-1})[2k+1]\stackrel{+1}{\longrightarrow}
		\end{equation}
            in the derived category of $\Sigma_k$, where $\omega^\bullet$ is the dualizing complex of a variety.
	\end{enumerate}
\end{proposition}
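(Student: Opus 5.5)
The plan is to obtain all three parts from the Du Bois-type statement in \cite[Theorem 5.2]{ENP}, combined with Grothendieck duality for the proper map $\beta_k$.

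\emph{Part (1).} The relevant input from \cite{ENP} is twofold: $\beta_{k,*}\sO_{B^k(L)}(-Z_{k-1}) = I_{\Sigma_{k-1}/\Sigma_k}$ and $R^i\beta_{k,*}\sO_{B^k(L)}(-Z_{k-1})=0$ for $i>0$. The first statement says that $\Sigma_k$ is normal and that $\beta_k^{-1}(\Sigma_{k-1})=Z_{k-1}$ with reduced image $\Sigma_{k-1}$. For the vanishing I would follow \cite{ENP}. Push forward the sequence
$$0\to\sO_{B^k}(-Z_{k-1})\to\sO_{B^k}\to\sO_{Z_{k-1}}\to 0.$$
We know $R\beta_{k,*}\sO_{B^k}=\sO_{\Sigma_k}$, because $\Sigma_k$ has rational-type behaviour along the fibers $F_x\cong C_{k-m}$ via the theorem on formal functions. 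We also know $R\beta_{k,*}\sO_{Z_{k-1}}=\sO_{\Sigma_{k-1}}$; this follows inductively through $\alpha_{k,k-1}$ and the Du Bois property, which is exactly the content of \cite[Theorem 5.2]{ENP}. Comparing the two pushforwards gives (1).

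\emph{Part (3).} I apply Grothendieck duality $R\sHom(-,\omega^\bullet_{\Sigma_k})$ to (1). Since $B^k$ is smooth of dimension $2k+1$, we have
$$R\sHom(R\beta_{k,*}\sO(-Z_{k-1}),\omega^\bullet_{\Sigma_k})\cong R\beta_{k,*}\omega_{B^k}(Z_{k-1})[2k+1].$$
I then dualize the sequence
$$0\to I_{\Sigma_{k-1}/\Sigma_k}\to\sO_{\Sigma_k}\to\sO_{\Sigma_{k-1}}\to0.$$
Using $R\sHom(\sO_{\Sigma_{k-1}},\omega^\bullet_{\Sigma_k})=\omega^\bullet_{\Sigma_{k-1}}$, this gives the triangle \eqref{eq:tri}.

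\emph{Part (2).} I apply Serre duality on $B^k$ together with (1):
$$H^{2k+1-i}(\Sigma_k,I_{\Sigma_{k-1}/\Sigma_k}(-\ell)) = H^{2k+1-i}(B^k,\sO(-Z_{k-1})\otimes\beta_k^*\sO(-\ell)),$$
by the projection formula. This group is dual to $H^i(B^k,\omega_{B^k}(Z_{k-1})\otimes\sO_{B^k}(\ell))$. By the formula $\omega_{B^k}(Z_{k-1})=\pi_k^*T_{k+1}(\omega_C)$ recalled above, it equals $H^i(B^k,\pi_k^*T_{k+1}(\omega_C)\otimes\sO(\ell))$. Since $R\pi_{k,*}\sO(\ell)=S^\ell E_{k+1,L}$ for $\ell\ge0$, this is $H^i(C_{k+1},S^\ell E_{k+1,L}\otimes T_{k+1}(\omega_C))$.

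The main obstacle is the vanishing of the higher direct images in (1), i.e.\ the Du Bois input. All other steps are formal, so I would simply cite \cite[Theorem 5.2]{ENP} for it.
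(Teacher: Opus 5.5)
Your arguments for (2) and (3) are exactly the paper's. For (2), you combine (1) with the projection formula, Serre duality on $B^k(L)$, $\omega_{B^k(L)}(Z_{k-1})=\pi_k^*T_{k+1}(\omega_C)$ and $R\pi_{k,*}\sO_{B^k(L)}(\ell)=S^\ell E_{k+1,L}$. For (3), you dualize $0\to I_{\Sigma_{k-1}/\Sigma_k}\to\sO_{\Sigma_k}\to\sO_{\Sigma_{k-1}}\to 0$ into $\omega^\bullet_{\Sigma_k}$ and apply Grothendieck duality for $\beta_k$. For (1), the paper simply cites \cite[Theorem 5.2]{ENP}, as you ultimately do, so your proof is complete once it rests on that citation.

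However, the sketch you give of why (1) holds is wrong and should be deleted. Both of your ``known'' inputs fail as soon as $g\geq 1$ and $k\geq 1$, because $\Sigma_k$ is Du Bois but not rational. Suppose $R\beta_{k,*}\sO_{B^k(L)}=\sO_{\Sigma_k}$. Then $H^{2k+1}(\Sigma_k,\sO_{\Sigma_k})=H^{2k+1}(C_{k+1},\sO_{C_{k+1}})=0$, since $\dim C_{k+1}<2k+1$. This contradicts $h^{2k+1}(\Sigma_k,\sO_{\Sigma_k})=\binom{g+k}{k+1}$ from \cite[Theorem 1.2]{ENP}. More precisely, Theorem~\ref{thm:cohomologysecantsheaf} gives $R^i\beta_{k,*}\sO_{B^k(L)}=\wedge^iH^1(C,\sO_C)\otimes\sO_{\Sigma_{k-i}}$, reflecting the nonzero cohomology of $\sO$ on the fibres $F_x\cong C_{k-m}$; there is no ``rational-type behaviour'' along them. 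Likewise $R\beta_{k,*}\sO_{Z_{k-1}}\neq\sO_{\Sigma_{k-1}}$. Given (1), pushing forward $0\to\sO_{B^k(L)}(-Z_{k-1})\to\sO_{B^k(L)}\to\sO_{Z_{k-1}}\to 0$ shows that the restriction maps $R^i\beta_{k,*}\sO_{B^k(L)}\to R^i\beta_{k,*}\sO_{Z_{k-1}}$ are isomorphisms for $i\geq 1$, between sheaves that are nonzero when $g\geq 1$. So the vanishing of $R^i\beta_{k,*}\sO_{B^k(L)}(-Z_{k-1})$ cannot come from showing that both pushforwards are trivial. In \cite{ENP} it is proved directly for $\sO_{B^k(L)}(-Z_{k-1})$, via the theorem on formal functions along the fibres $F_x$.
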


\begin{proof} 
For the first statement (1), the quasi-isomorphism $R\beta_{k,*}\sO_{B^k(L)}(-Z_{k-1})\cong_{\qis} I_{\Sigma_{k-1}/\Sigma_k}$ was proved in \cite[Theorem 5.2]{ENP}.  For the second statement (2), write $H$ for the tautological line bundle of $B^k(L)$. We compute directly that
	\begin{eqnarray*}
		H^{2k+1-i}(\Sigma_{k}, I_{\Sigma_{k-1}/\Sigma_{k}}(-\ell))^*&=&H^{2k+1-i}(B^k(L),\sO_{B^k(L)}(-Z_{k-1}-\ell H))^*\\
		&=&H^i(B^k(L),\omega_{B^k(L)}(Z_{k-1}+\ell H)) \\
		&=&H^i(C_{k+1},S^\ell E_{k+1,L}\otimes T_{k+1}(\omega_C)).
	\end{eqnarray*} 
For the last statement (3), apply the functor $R\sHom(\ \_\ , \omega^\bullet_{\Sigma_{k}})$ to the short exact sequence 
	$$0\longrightarrow I_{\Sigma_{k-1}/\Sigma_k}\longrightarrow \sO_{\Sigma_k}\longrightarrow \sO_{\Sigma_{k-1}}\longrightarrow 0$$
	to yield an exact triangle 
	$$\omega^\bullet_{\Sigma_{k-1}}\longrightarrow \omega^\bullet_{\Sigma_k}\longrightarrow R\sHom(I_{\Sigma_{k-1}/\Sigma_k},\omega^\bullet_{\Sigma_{k}})\stackrel{+1}{\longrightarrow}$$
	By Grothendieck duality we have 
	$$R\sHom(I_{\Sigma_{k-1}/\Sigma_k},\omega^\bullet_{\Sigma_{k}})=R\sHom(R\beta_{k,*}\sO_{B^k(L)}(-Z_{k-1}),\omega^\bullet_{\Sigma_{k}})\cong R\beta_{k,*}\omega_{B^k(L)}(Z_{k-1})[2k+1],$$
	where we use the fact that the dualizing complex $\omega^\bullet_{B^k(L)}$ of $B^k(L)$ is $\omega_{B^k(L)}[2k+1]$.
	Thus we obtain the desired exact triangle. 
\end{proof}

\section{Tangent cones of secant varieties}\label{sec:tangentcones}

\noindent In this section, we study the tangent cones of secant varieties of curves. Our approach is motivated by \cite[Section 3]{Ein}. Throughout the section, we fix a closed point $x$ in the $k$-th secant variety $\Sigma_k$. If $x\notin \Sigma_{k-1}$, then $\Sigma_{k}$ is nonsingular at $x$ and its tangent cone at $x$ is just the tangent space, which is not interesting here. So we assume that $x\in \Sigma_{k-1}$ so that $\Sigma_k$ is singular at $x$. Then there exists a unique integer $m$ with $0\leq m\leq k-1$ such that $x\in \Sigma_m \setminus \Sigma_{m-1}$. There exists a unique effective divisor $\xi = \xi_{m+1, x} \in C_{m+1}$ of degree $m+1$ such that the point $x$ is in the linear space $\langle \xi \rangle$ spanned by $\xi$ in $\nP^r$. 

\medskip

Recall that there is  a birational morphism 
$$
\beta_k \colon B^k \longrightarrow \Sigma_k\subseteq \nP^r=\nP(H^0(L)).
$$
Write 
$$
F:=\beta^{-1}_k(x)
$$
for the fiber of $\beta_k$ over the point $x$. Recall from \cite[Proposition 3.13]{ENP} that $F\cong C_{k-m}$ and
and $N^*_{F/B^k}$ is locally free with a canonical splitting into a direct sum of   the conormal bundle $N^*_{F/Z_m}$ of $F$ inside $Z_m$, which is the relative secant variety of $m$-planes in $B^k$, and  the restriction of the conormal bundle  $N^*_{Z_m/B^k}$ of $Z_m$ inside $B^k(L)$, namely,
\begin{equation}\label{eq:split}
	N^*_{F/B^k}=N^*_{F/Z_m}\oplus N^*_{Z_m/B^k(L)}|_F.
\end{equation}
These two direct summands in the splitting can be computed explicitly: 
$$N^*_{Z_m/B^k}|_F\cong E_{k-m,L(-2\xi)} ~~\text{ and }~~ N^*_{F/Z_m}=T_x^*\Sigma_m\otimes \sO_F,$$
where  $T_x^*\Sigma_m$ is the cotangent space of $\Sigma_m$ at $x$, which has dimension $2m+1$ as $\Sigma_m$ is nonsingular at $x$.

\medskip

Write $\mf{m}$ for the maximal ideal sheaf of $\sO_{\Sigma_k}$ defining the point $x$. The cotangent space of $\Sigma_k$ at $x$ is  $T^*_x\Sigma_k=\mf{m}/\mf{m}^2$. Recall that $\Sigma_k$ is singular at $x$. It was observed in \cite{ENP} that  
$$T^*_x\Sigma_k=T^*_x\nP^r=H^0(F,N^*_{F/B^k}).$$
Consider the associated graded ring of the local ring $\sO_{\Sigma_{k},x}$ which is defined to be $$G:=\gr_{\mf{m}}\sO_{\Sigma_k}=\oplus_{\ell\geq 0} \mf{m}^\ell/\mf{m}^{\ell+1}.$$
Then the ``projectivized'' tangent cone to $\Sigma_k$ at $x$ is defined to be a projective scheme $$\nP(G):=\Proj(G).$$ It is naturally a projective subscheme of the projectivized tangent space $\nP^{r-1}:=\nP(T^*_x\nP^r)$ of $\nP^r$ at $x$, which is the same as the projectivized tangent space of $\Sigma_k$ at $x$, determined by the surjective graded morphism $\Sym^*T^*_x\nP^r \rightarrow G$.
We will frequently just call $\nP(G)$ as the tangent cone. Here we mention a geometric consequence of the splitting in (\ref{eq:split}). It gives two natural morphisms 
$$\nP(E_{k-m,L(-2\xi)})\longrightarrow \nP(T^*_x\nP^r) ~~\text{ and }~~ \nP(T_x^*\Sigma_m\otimes \sO_F)\longrightarrow \nP(T^*_x\nP^r).$$
The image of the first map is the secant variety $\Sigma_{k-m-1}(L(-2\xi))$ and the image of the second one is simply the linear space $\nP(T_x^*\Sigma_m)$.

\begin{lemma}\label{lem:gammaisomorphism}
For any integer $\ell\geq 0$, the natural map
	$$\gamma_\ell \colon \mf{m}^\ell/\mf{m}^{\ell+1}\longrightarrow H^0(F,S^\ell N^*_{F/B^k})$$
	is an isomorphism. 
\end{lemma}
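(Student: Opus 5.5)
Following the approach of \cite[Section 3]{Ein}, I will compare the $\mf{m}$-adic filtration on $\sO_{\Sigma_k,x}$ with the filtration of $\sO_{B^k}$ by powers of the ideal $I_F$ of $F=\beta_k^{-1}(x)$. Since $\Sigma_k$ is normal (Theorem \ref{p:01}) and $\beta_k$ is birational, $\beta_{k,*}\sO_{B^k}=\sO_{\Sigma_k}$. Moreover $\mf{m}\cdot\sO_{B^k}\subseteq I_F$, and in fact $\mf{m}\cdot\sO_{B^k}=I_F$, because $T^*_x\nP^r\to H^0(F,N^*_{F/B^k})$ is an isomorphism and $N^*_{F/B^k}$ is globally generated (it is $\sO_F^{\oplus 2m+1}\oplus E_{k-m,L(-2\xi)}$, and the second summand is generated since $L(-2\xi)$ has degree $\ge 2g+2(k-m-1)+1$). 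Hence $\mf{m}^\ell\sO_{B^k}=I_F^\ell$, which gives a natural map $\mf{m}^\ell\to H^0(B^k,I_F^\ell)$ and then $\gamma_\ell$ through $I_F^\ell/I_F^{\ell+1}=S^\ell N^*_{F/B^k}$; this identification holds because $F$ is a local complete intersection of codimension $2k+1-\dim F$ in the smooth $B^k$.

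The proof then has two steps, taken after localizing around $x$ (or working on the formal neighborhood). \emph{Step 1}: show $\beta_{k,*}I_F^\ell=\mf{m}^\ell$ for all $\ell$. Equivalently, $\mf{m}^\ell$ is integrally closed and the blow-up of $\Sigma_k$ along $\mf{m}$ is dominated nicely; I will instead prove surjectivity of the graded pieces directly and conclude by induction. \emph{Step 2}: show $H^1(B^k,I_F^{\ell+1})=0$ near $F$, i.e.\ $R^1\beta_{k,*}I_F^{\ell+1}=0$, so that $\beta_{k,*}I_F^\ell\to H^0(F,S^\ell N^*_{F/B^k})$ is surjective with kernel $\beta_{k,*}I_F^{\ell+1}$. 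Both statements follow by descending induction using the theorem on formal functions: $(R^1\beta_{k,*}I_F^{\ell+1})^\wedge=\varprojlim H^1(I_F^{\ell+1}/I_F^{n})$, and the graded pieces are $H^1(F,S^jN^*_{F/B^k})$ for $j\ge \ell+1$. So the key vanishing is
$$H^1(F,S^jN^*_{F/B^k})=\bigoplus_{a+b=j} S^aT^*_x\Sigma_m\otimes H^1(C_{k-m},S^bE_{k-m,L(-2\xi)})=0,$$
and this in turn yields $\beta_{k,*}I_F^\ell/\beta_{k,*}I_F^{\ell+1}\cong H^0(F,S^\ell N^*)$.

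Granting that, $\gamma_\ell$ is surjective, since $\mf{m}^\ell\to\beta_{k,*}I_F^\ell\to H^0(S^\ell N^*)$ and $H^0(F,S^\ell N^*)$ is generated by products of degree-one sections: $H^0(C_{k-m},S^bE)$ is a quotient of $S^bH^0(E)=S^bH^0(L(-2\xi))$. That last fact is the surjectivity $S^bH^0(\mathscr L)\to H^0(C_n,S^bE_{n,\mathscr L})$ for $\deg\mathscr L\ge 2g+2n-1$, which is the projective normality of $\Sigma_{n-1}(\mathscr L)$ plus $\beta_*\sO=\sO$, combined with Theorem \ref{p:01}(2) and (3) applied to $\mathscr L=L(-2\xi)$ and $n=k-m$. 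The $H^1$ vanishing is likewise Theorem \ref{p:01}(3) for $\Sigma_{k-m-1}(L(-2\xi))$, transported through $R\beta_*\sO_{B}=\sO_\Sigma$ (rational, indeed Du Bois, singularities). Surjectivity at every level then gives $\mf{m}^\ell+\beta_*I_F^{\ell+1}=\beta_*I_F^\ell$, so by Krull/Artin--Rees in the complete local ring, $\beta_*I_F^\ell=\mf{m}^\ell$. Since the kernel of $\gamma_\ell$ is $\mf{m}^\ell\cap\beta_*I_F^{\ell+1}=\mf{m}^{\ell+1}$, injectivity follows.

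I expect the main obstacle to be the cohomological input on $F\cong C_{k-m}$: the vanishing of $H^1(C_n,S^bE_{n,\mathscr L})$ and the surjectivity of $S^bH^0(\mathscr L)\to H^0(S^bE_{n,\mathscr L})$. I would derive both from Theorem \ref{p:01} via the identification $H^i(C_n,S^bE_{n,\mathscr L})=H^i(B^{n-1}(\mathscr L),\sO(b))=H^i(\Sigma_{n-1}(\mathscr L),\sO(b))$. This last equality needs $R^i\beta_*\sO_{B}=0$ for $i>0$, which I would extract from Proposition \ref{p:21}(1) by induction on $n$ using the sequence $0\to\sO(-Z)\to\sO\to\sO_Z\to0$, or else quote from \cite{ENP}.
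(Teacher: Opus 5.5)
\textbf{There is a genuine error, though it sits in a removable step.} The vanishing you call the key input is false as soon as $g\geq 1$; it holds only for $g=0$. Step~2 fails with it.

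In your own decomposition, the summand with $b=0$ is $S^jT^*_x\Sigma_m\otimes H^1(C_{k-m},\sO_{C_{k-m}})\cong S^jT^*_x\Sigma_m\otimes H^1(C,\sO_C)\neq 0$. For $b>0$, Theorem~\ref{thm:cohomologysecantsheaf}(2) gives $H^1(C_{k-m},S^bE_{k-m,L(-2\xi)})\cong H^1(C,\sO_C)\otimes H^0(\Sigma_{k-m-2}(L(-2\xi)),\sO(b))$, which is nonzero when $k-m\geq 2$.

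The underlying mistake is the assertion $R\beta_{k,*}\sO_{B^k}=\sO_{\Sigma_k}$. For $g\geq 1$ the singularities of $\Sigma_k$ are Du Bois but not rational, since $R^i\beta_{k,*}\sO_{B^k}=\wedge^iH^1(C,\sO_C)\otimes\sO_{\Sigma_{k-i}}$ by Theorem~\ref{thm:cohomologysecantsheaf}(1). Your proposed derivation from Proposition~\ref{p:21}(1) cannot work. That result only gives $R^i\beta_{k,*}\sO_{B^k}\cong R^i\beta_{k,*}\sO_{Z_{k-1}}$ for $i\geq 1$, and the curve factor of $B^{k-1}(L)\times C\to Z_{k-1}$ contributes $H^1(C,\sO_C)$.

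Concretely, $I_F^{\ell+1}=\sO_{B^k}$ away from $F$. So $R^1\beta_{k,*}I_F^{\ell+1}$ agrees with $H^1(C,\sO_C)\otimes\sO_{\Sigma_{k-1}}$ on $\Sigma_k\setminus\{x\}$, and it does not vanish on any neighbourhood of $x\in\Sigma_{k-1}$. Likewise, the identification $H^i(C_n,S^bE_{n,\mathscr L})=H^i(\Sigma_{n-1}(\mathscr L),\sO(b))$ holds for $i=0$, where normality suffices. It fails for $i=1$ when $g\geq 1$ and $n\geq 2$.

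\textbf{The fix.} Step~2 is dispensable, and once you drop it your argument becomes the paper's.

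Surjectivity of $\gamma_\ell$ uses only $H^0$. The map $S^\ell T^*_x\nP^r\to H^0(F,S^\ell N^*_{F/B^k})$ is onto because $S^bH^0(L(-2\xi))\to H^0(C_{k-m},S^bE_{k-m,L(-2\xi)})=H^0(\Sigma_{k-m-1}(L(-2\xi)),\sO(b))$ is onto, by normality and projective normality. Your derivation of this is correct; the paper simply quotes it from the proof of \cite[Theorem 5.2]{ENP}.

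For injectivity you do not need $\beta_{k,*}I_F^\ell/\beta_{k,*}I_F^{\ell+1}\cong H^0(F,S^\ell N^*_{F/B^k})$. Left exactness of $\beta_{k,*}$ already gives an injection. Combined with surjectivity of $\gamma_\ell$, this yields $\beta_{k,*}I_F^\ell=\mf m^\ell+\beta_{k,*}I_F^{\ell+1}$.

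To conclude $\beta_{k,*}I_F^\ell=\mf m^\ell$, you then need an actual comparison of the filtrations $\{\beta_{k,*}I_F^n\}$ and $\{\mf m^n\}$. Your appeal to Krull/Artin--Rees does not supply this by itself. The theorem on formal functions applied to $\beta_{k,*}\sO_{B^k}=\sO_{\Sigma_k}$ does, namely $\varprojlim\sO_{\Sigma_k}/\mf m^n\cong\varprojlim H^0(\sO_{B^k}/I_F^n)$.

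This is exactly how the paper argues. Set $K_\ell=\ker(\sO_{\Sigma_k}/\mf m^\ell\to H^0(\sO_{B^k}/I_F^\ell))$, which is your $\beta_{k,*}I_F^\ell/\mf m^\ell$. It has surjective transition maps by the snake lemma and zero inverse limit, hence vanishes, and then $\gamma_\ell$ is injective.
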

\begin{proof}
The map $\gamma_{\ell}$ fits into the following commutative diagram 
	$$\xymatrix{
		S^\ell T^*_x\nP^r \ar[dr]_-{\alpha_\ell} \ar[r] & \mf{m}^\ell/\mf{m}^{\ell+1}\ar[d]^{\gamma_\ell}\\
		& H^0(F,S^\ell N^*_{F/B^k}).}$$
	As shown in the proof of \cite[Theorem 5.2]{ENP}, the morphism $\alpha_\ell$ is surjective. Thus we see immediately that the map $\gamma_\ell$ is also surjective. In the rest of the proof, we will show that $\gamma_\ell$ is injective. 
	
    As $\Sigma_k$ is normal, the natural map
    $\beta^\# \colon \sO_{\Sigma_k}\rightarrow \beta_{*}\sO_{B^k}$ is an isomorphism. This was explicitly proven in the proof of \cite[Theorem 5.2]{ENP} using the formal function. Consider two inverse systems $\{\sO_{\Sigma_k}/\mf{m}^\ell\}_{\ell\geq 0}$ and $H^0(\sO_{B^k}/I^\ell_F)\}_{\ell\geq 0}$. For each $\ell \geq 0$, there is a natural morphism 
	$$\beta^\ell_k \colon \sO_{\Sigma_k}/\mf{m}^\ell\longrightarrow H^0(\sO_{B^k}/I^\ell_F)$$
	induced by the morphism $\beta_k$. Write the kernel of $\beta^\ell_k$ as $K_\ell$ and we obtain an induced inverse system $\{K_\ell\}_{\ell\geq 0}$. Consider the commutative diagram 
	$$\xymatrix{
		0\ar[r]&	\mf{m}^{\ell}/\mf{m}^{\ell+1} \ar[d]^{\gamma_{\ell}} \ar[r] & \sO_{\Sigma_k}/\mf{m}^{\ell+1} \ar[r]\ar[d]^{\beta^{\ell+1}_k}  & \sO_{\Sigma_k}/\mf{m}^{\ell}\ar[d]^{\beta^\ell_k} \ar[r] & 0\\
		0\ar[r]&	H^0(I_F^{\ell}/I_F^{\ell+1}) \ar[r]& H^0(\sO_{B^k}/I^{\ell+1}_F)\ar[r]  & H^0(\sO_{B^k}/I^{\ell}_F)\ar[r] &\cdots
	}$$
	Since $\gamma_\ell$ is surjective, by Snake lemma, the induced map $K_{\ell+1}\rightarrow K_\ell$ is surjective. 
	
	As $\beta^\#$ is isomorphic, the formal function theorem then gives the isomorphism between the inverse limits
	$$\varprojlim (\sO_{\Sigma_k}/\mf{m}^\ell)\longrightarrow \varprojlim H^0(\sO_{B^k}/I^\ell_F).$$
	Therefore the inverse limits of the system $\{K_\ell\}_{\ell\geq 0}$ is zero, i.e., $\varprojlim K_\ell=0$. But  $K_{\ell+1}\rightarrow K_\ell $ is surjective for all $\ell\geq 0$. Hence we conclude that $K_\ell=0$ for all $\ell\geq 0$. This means the map $\beta^\ell_k$ is injective and by the Snake lemma, we see the map $\gamma_\ell$ is injective.
\end{proof}

The following is the main result of this section.

\begin{theorem}\label{thm:01} 
The projectivized tangent cone $\nP(G)$ in the space $\nP^{r-1}:=\nP(T^*_x\nP^r)$ is a cone over the secant variety $\Sigma_{k-m-1}(L(-2\xi))$ with the vertex $\nP(T_x^*\Sigma_m)$.
\end{theorem}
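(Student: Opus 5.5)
Our plan is to use Lemma \ref{lem:gammaisomorphism} to identify the graded ring $G$ with a ring of sections over $F$, and then read off the geometry. Write $V:=T_x^*\Sigma_m$, a vector space of dimension $2m+1$, and $E:=E_{k-m,L(-2\xi)}$ on $F\cong C_{k-m}$. The splitting (\ref{eq:split}) gives $N^*_{F/B^k}\cong (V\otimes\sO_F)\oplus E$, hence
$$S^\ell N^*_{F/B^k}\cong \bigoplus_{a+b=\ell} S^aV\otimes S^bE,$$
and therefore
$$G\cong \bigoplus_{\ell\ge 0} H^0(F,S^\ell N^*_{F/B^k}) \cong \Sym^*V\otimes_{\Bbbk} \bigoplus_{b\geq 0}H^0(C_{k-m},S^bE).$$
We will check that Lemma \ref{lem:gammaisomorphism} gives a ring isomorphism, not just a graded vector space one. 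The maps $\gamma_\ell$ are compatible with multiplication, because they come from restricting functions on $B^k$ to the formal neighbourhood of $F$. So $G$ is the ring $R(N^*_{F/B^k}):=\bigoplus_\ell H^0(F,S^\ell N^*_{F/B^k})$, and the surjection $\Sym^*T_x^*\nP^r\to G$ is the composition with the maps $\alpha_\ell$.

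Next we identify the second tensor factor. Put $L':=L(-2\xi)$. Since $\deg L'=\deg L-2m-2\geq 2g+2(k-m-1)+1$, the paper's standing setup applies to $(C,L')$ with $k-m-1$ in place of $k$. Thus $H^0(C_{k-m},E)=H^0(C,L')$, the map $\beta_{k-m-1}\colon \nP(E)=B^{k-m-1}(L')\to\Sigma_{k-m-1}(L')$ is birational, and $\Sigma_{k-m-1}(L')$ is normal and projectively normal by Theorem \ref{p:01}. Using $R\beta_*\sO=\sO$ (Theorem \ref{p:01}(1) together with the normality argument), we get
$$H^0(C_{k-m},S^bE)=H^0(B^{k-m-1}(L'),\sO(b))=H^0(\Sigma_{k-m-1}(L'),\sO(b)).$$
Projective normality then says $\bigoplus_b H^0(S^bE)$ is the homogeneous coordinate ring $S_{\Sigma}$ of $\Sigma_{k-m-1}(L')\subseteq\nP(H^0(L'))$. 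Hence
$$G\cong \Sym^*V\otimes_\Bbbk S_\Sigma .$$
This is precisely the homogeneous coordinate ring of the cone over $\Sigma_{k-m-1}(L')$ with vertex $\nP(V)$, taken in $\nP(V\oplus H^0(L'))$.

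Finally, we must match this with the embedding into $\nP(T^*_x\nP^r)$. Degree one of $G$ is $T_x^*\nP^r=H^0(N^*_{F/B^k})=V\oplus H^0(C,L')$, so the ambient spaces agree. By the remark before the theorem, the two summands give the linear space $\nP(V)$ and the secant variety $\Sigma_{k-m-1}(L')$, the latter via $H^0(E)=H^0(L')$. Since $G$ is generated in degree one with kernel the ideal of the join, $\Proj G$ is the cone as claimed.

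The main obstacle is the multiplicativity and compatibility bookkeeping: checking that the isomorphisms $\gamma_\ell$, the splitting (\ref{eq:split}) and the identification $H^0(E)=H^0(L')$ are compatible with the algebra structures and with the degree-one identification. Only then is $G$ isomorphic to $\Sym V\otimes S_\Sigma$ as a quotient of $\Sym^*T_x^*\nP^r$, and not merely as an abstract graded ring. We would first try to prove this by working on $\nP(N_{F/B^k})$, the exceptional divisor of the normal cone. There the multiplication is tautologically the symmetric algebra structure, and the splitting is canonical because it comes from the subvariety $Z_m$.
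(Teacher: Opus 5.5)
Your proof is correct, but it is organized differently from the paper's.

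The paper argues geometrically. It blows up $B^k$ along $F$ and $\Sigma_k$ at $x$, and notes that the induced map $\phi\colon\nP(N^*_{F/B^k})\to\nP^{r-1}$ is given by $H^0(N^*_{F/B^k})=T^*_x\Sigma_m\oplus H^0(E_{k-m,L(-2\xi)})$ and factors through $\nP(G)$. From the splitting (\ref{eq:split}) it reads off that the image of $\phi$ is, set-theoretically, the cone. It then uses Lemma \ref{lem:gammaisomorphism} only to show that $\sO_{\nP(G)}\to\phi_*\sO_{\nP(N^*_{F/B^k})}$ is injective, by comparing sections in degrees $\ell\gg0$. Hence $\nP(G)$ is the reduced scheme-theoretic image. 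Projective normality of $\Sigma_{k-m-1}(L(-2\xi))$ is deferred to Corollary \ref{cor:01}.

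You instead use Lemma \ref{lem:gammaisomorphism} as a graded ring isomorphism $G\cong\bigoplus_\ell H^0(F,S^\ell N^*_{F/B^k})$ and compute this section ring outright as $\Sym^*V\otimes S_\Sigma$. This has three advantages:
\begin{itemize}
\item it gives Theorem \ref{thm:01} and Corollary \ref{cor:01} at once;
\item it shows directly that $G$ is a domain;
\item it avoids identifying $G_\ell$ with $H^0(\sO_{\nP(G)}(\ell))$.
\end{itemize}
In your route projective normality is not even needed. Since $G$ is generated in degree one, so is its quotient $G/(V)=\bigoplus_b H^0(S^bE)$. Moreover, the kernel of $\Sym^*H^0(L')\to\bigoplus_b H^0(B^{k-m-1}(L'),\sO(b))$ is the ideal of the image of $\beta_{k-m-1}$.

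Two minor points.

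First, the assertion $R\beta_*\sO=\sO$ is false for $g\geq 1$. By Theorem \ref{thm:cohomologysecantsheaf}, $R^i\beta_{k,*}\sO_{B^k(L)}=\wedge^iH^1(C,\sO_C)\otimes\sO_{\Sigma_{k-i}}$. You only use $\beta_*\sO_B=\sO_\Sigma$, which follows from normality and Zariski's main theorem. So nothing breaks, but the statement should be corrected.

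Second, the obstacle you flag is routine:
\begin{itemize}
\item $\gamma$ is multiplicative because $\beta_k^\#$ is a ring map with $\mf{m}^a\sO_{B^k}\subseteq I_F^a$.
\item A graded algebra map out of $\Sym^*T^*_x\nP^r$ is determined by its degree-one part, so compatibility with the presentation is automatic.
\item The only point needing a sentence is that $W=H^0(N^*_{Z_m/B^k}|_F)$ is the kernel of the restriction $T^*_x\nP^r\to T^*_x\Sigma_m$. Then the linear space cut out by $W$, which is the vertex of your cone, is exactly $\nP(T^*_x\Sigma_m)$.
\end{itemize}
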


\begin{proof} 
Consider the blowup $\nP^r$ along $x$ and the blowup $B^k(L)$ along $F$. Consider the strict transforms of $\Sigma_k$ and $Z_m$ to yield the following commutative diagram 
	$$\xymatrix{
		\bl_FZ_m \ar@{^{(}->}[r]\ar[d] &	\bl_F(B^k)  \ar[d] \ar[r] & \bl_x\Sigma_k\ar[d]\ar@{^{(}->}[r] &  \bl_x\nP^r \ar[d] \\
		Z_m  \ar@{^{(}->}[r]&	B^k(L) \ar[r]^{\beta_k} &  \Sigma_k \ar@{^{(}->}[r] &  \nP^r. 
	}$$
	If we look at the exceptional divisors in the above diagram, we obtain the following induced commutative diagram 
	$$\xymatrix{
		\nP(N^*_{F/Z_m}) \ar@{^{(}->}[r]\ar[d] &	\nP(N^*_{F/B^k}) \ar[d] \ar[r] & \nP(G)\ar[d]\ar@{^{(}->}[r] &  \nP^{r-1}=\nP(T^*_{x}\nP^r) \ar[d] \\
		F  \ar@{^{(}->}[r]&	F \ar[r]^{\beta_k} &  x \ar@{^{(}->}[r] &  x .
	}$$
	Morphisms in the top row of the diagram from the schemes  to the space $\nP^{r-1}$ are all given  by the tautological bundle of the Proj construction. In particular, the composition map 
    $$\phi \colon \nP(N^*_{F/B^k})\longrightarrow \nP^{r-1}$$ factoring through $\nP(G)$ is determined by the global sections $$H^0(\sO_{\nP(N^*_{F/B^k})}(1))=H^0(N^*_{F/B^k})=T_x^*\Sigma_m\oplus H^0(E_{k-m,L(-2\xi)}).$$
     Recall the splitting in (\ref{eq:split}) that 
	$$N^*_{F/B^k}=N^*_{F/Z_m}\oplus N^*_{Z_m/B^k(L)}|_F.$$
	We see that the image $\phi(\nP(N^*_{F/Z_m}))$ is the secant variety $\Sigma_{k-m-1}(L(-2\xi))$ of $C$ determined by the line bundle $L(-2\xi)$ and the image $\phi(N^*_{F/Z_m})$ is the linear space $\nP(T_x^*\Sigma_m)$. Thus the image of $\phi$ is a cone over $\Sigma_{k-m-1}(L(-2\xi))$ with the vertex $\nP(T_x^*\Sigma_m)$, which at least set-theoretically equals $\nP(G)$.
	
	Next, we claim that $\nP(G)$ is the scheme-theoretical image of $\phi$. It is enough to show that the induced map 
	$$\phi^\# \colon \sO_{\nP(G)}\longrightarrow \phi_*\sO_{\nP(N^*_{F/B^k})}$$
	is injective. It is then enough to show that for $\ell \gg 0$, the induced map on the global sections
	$$H^0(\sO_{\nP(G)}(\ell))\longrightarrow H^0(\sO_{\nP(N^*_{F/B^k})}(\ell))$$
	is injective. But this map is exactly the map 
	$$\gamma^\ell \colon \mf{m}^\ell/\mf{m}^{\ell+1}\longrightarrow H^0(F,S^\ell N^*_{F/B^k})$$
        in Lemma \ref{lem:gammaisomorphism}, which is an isomorphism.
\end{proof}

\begin{remark}
We also see that the ``projective'' tangent cone to $\Sigma_k$ at $x$ is a cone over $\Sigma_{k-m-1}(C, L(-2\xi))$ with the vertex $\langle 2\xi \rangle$. Here Terracini lemma says that $\langle 2\xi \rangle$ is the projective tangent space to $\Sigma_m$ at $x$. We may think that $\Sigma_{k-m-1}(C, L(-2\xi))$ is the $(k-m-1)$-th secant variety of the embedded projective curve $C \subseteq \nP H^0(L(-2\xi))$ obtained by the inner projection of $C \subseteq \nP^r=\nP H^0(L)$ centered at $\langle 2\xi \rangle$. 
A similar result was also shown by Ciliberto--Russo \cite[Theorem 3.1]{CR}. They actually proved that the projective tangent cones of secant varieties are in general contained in cones over lower secant varieties. 
\end{remark}

\begin{remark}
The birational morphism $\phi \colon \nP (N_{F/B^k}^*) \to \nP (G)$ in the proof of Theorem \ref{thm:01} is a resolution of singularities. On the other hand, there is another birational morphism $P_G \to \nP(G)$ given by $|\sO_{P_G}(1)|$, where
$$
P_G:=\nP (\sO_{\Sigma_{k-m-1}(L(-2\xi))}^{\oplus 2m} \oplus \sO_{\Sigma_{k-m-1}(L(-2\xi))}(1)).
$$
By \cite[Proposition 2.9]{CG}, $P_G$ is of Fano type if and only if $\Sigma_{k-m-1}(L(-2\xi))$ is of Fano type. Recall from \cite[Theorem 1.1]{ENP} that $\Sigma_{k-m-1}(L(-2\xi))$ is of Fano type if and only if $C=\nP^1$. In this case, $\Sigma_{k-m-1}(L(-2\xi))$ is a Fano variety of Picard number $1$, and so is the tangent cone $\nP(G)$. One can also prove the converse: If $\nP(G)$ is a Fano variety, then $C=\nP^1$.      
\end{remark}

\begin{corollary}\label{cor:01} 
The associated graded ring $\gr_{\mf{m}}\sO_{\Sigma_k}$ is the coordinator ring of the projectivized tangent cone $\nP(G)$ in the space $\nP^{r-1}:=\nP(T^*_x\nP^r)$. 
\end{corollary}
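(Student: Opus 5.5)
The plan is to show that the graded ring $G=\gr_{\mf{m}}\sO_{\Sigma_k}$ equals its own saturation: the natural map
$$G\longrightarrow \bigoplus_{\ell\geq 0}H^0(\nP(G),\sO_{\nP(G)}(\ell))$$
should be an isomorphism in every degree. Since $G$ is a quotient of $\Sym^*T^*_x\nP^r$, and $\nP(G)=\Proj(G)$ is the scheme cut out by the homogeneous ideal $J=\ker(\Sym^*T^*_x\nP^r\to G)$, this amounts to proving that $J$ is saturated. Once that holds, $G$ is the homogeneous coordinate ring of $\nP(G)\subseteq\nP^{r-1}$.

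To prove saturation, I will use the resolution $\phi\colon \nP(N^*_{F/B^k})\to\nP(G)$ from the proof of Theorem \ref{thm:01} and compose the maps
$$\mf{m}^\ell/\mf{m}^{\ell+1}\longrightarrow H^0(\nP(G),\sO_{\nP(G)}(\ell))\longrightarrow H^0(\nP(N^*_{F/B^k}),\sO(\ell))=H^0(F,S^\ell N^*_{F/B^k}).$$
By Lemma \ref{lem:gammaisomorphism} the composite is the isomorphism $\gamma_\ell$. So the first map is injective, and the second map is surjective.

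It remains to check that the second map is injective. This is where Theorem \ref{thm:01} enters: it shows that $\nP(G)$ is the scheme-theoretic image of $\phi$, so $\phi^\#\colon\sO_{\nP(G)}\to\phi_*\sO_{\nP(N^*_{F/B^k})}$ is injective. Twisting by $\sO(\ell)$ and taking global sections, which is left exact, gives injectivity of $H^0(\sO_{\nP(G)}(\ell))\to H^0(\phi_*\sO(\ell))=H^0(\sO_{\nP(N^*_{F/B^k})}(\ell))$, using the projection formula $\phi^*\sO_{\nP(G)}(1)=\sO(1)$. Hence both maps are isomorphisms for all $\ell\geq 0$.

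Consequently $G\to\bigoplus_\ell H^0(\sO_{\nP(G)}(\ell))$ is an isomorphism, and $J$ is saturated. Indeed, if $f\in\Sym^\ell$ satisfies $f\cdot\mf{m}_{\rm irr}^N\subseteq J$, then the image of $f$ is zero as a section of $\sO_{\nP(G)}(\ell)$, so by injectivity $f\in J$.

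The only delicate point is the identification in Theorem \ref{thm:01}: that $\phi$ factors through $\nP(G)$ with scheme-theoretic image equal to it, and that the pullback of $\sO_{\nP(G)}(1)$ is the tautological bundle. Injectivity of $\phi^\#$ as a sheaf map, rather than only for $\ell\gg0$, follows because $\phi^\#$ injective in high degrees already forces the sheaf map to be injective. So no additional input beyond Theorem \ref{thm:01} and Lemma \ref{lem:gammaisomorphism} is needed.
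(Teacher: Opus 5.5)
Your proof is correct, but it gets surjectivity of $\mu_\ell\colon\mf{m}^\ell/\mf{m}^{\ell+1}\to H^0(\nP(G),\sO_{\nP(G)}(\ell))$ by a different route from the paper.

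The paper uses the factorization on the other side. By Theorem \ref{thm:01}, $\nP(G)$ is the cone over $\Sigma_{k-m-1}(L(-2\xi))$, which is projectively normal by Theorem \ref{p:01} since $\deg L(-2\xi)\geq 2g+2(k-m-1)+1$. Hence $S^\ell T^*_x\nP^r\to H^0(\nP(G),\sO_{\nP(G)}(\ell))$ is surjective. Together with injectivity of $\mu_\ell$ (from Lemma \ref{lem:gammaisomorphism}), this forces $\mu_\ell$ to be bijective.

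You instead factor $\gamma_\ell$ through $H^0(\nP(N^*_{F/B^k}),\sO(\ell))$. You then use injectivity of $\phi^\#$, which is exactly what the proof of Theorem \ref{thm:01} establishes, to make the second factor injective in every degree, hence bijective. This avoids any appeal to projective normality of lower secant varieties or of cones over them, and gives the projective normality of $\nP(G)$ as a byproduct. The underlying input is the same, though: the surjectivity of $\alpha_\ell$ behind Lemma \ref{lem:gammaisomorphism} amounts to the surjectivity of $S^b H^0(L(-2\xi))\to H^0(\Sigma_{k-m-1}(L(-2\xi)),\sO(b))$, i.e., projective normality of $\Sigma_{k-m-1}(L(-2\xi))$.

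Saturation of $J$ by itself, which is what Corollary \ref{cor:02} uses, already follows from injectivity of $\mu_\ell$, and that is immediate from Lemma \ref{lem:gammaisomorphism}. Injectivity of $\phi^\#$ in your argument, or projective normality in the paper's, is needed only to identify $G$ with the full section ring $\bigoplus_{\ell}H^0(\nP(G),\sO_{\nP(G)}(\ell))$.
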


\begin{proof}  
Note that $\deg L(-2\xi) \geq 2g+2(k-m-1)+1$. 
Since $\Sigma_{k-m-1}(L(-2\xi)) \subseteq \nP H^0(L(-2\xi))$ is projectively normal by Theorem \ref{p:01} and since $\nP(G)$ is a cone over the secant variety $\Sigma_{k-m-1}(L(-2\xi))$ with the vertex $\nP(T_x^*\Sigma_m)$,  we see that $\nP(G)$  is projectively normal in the space $\nP^{r-1}$ . Thus for each $\ell\geq 0$, we have a commutative diagram 
	$$\xymatrix{
		H^0(\nP^{r-1},\sO_{\nP^{r-1}}(\ell))=S^l(T^*_x\nP^r) \ar[dr]_-{\lambda_\ell} \ar[r] & \mf{m}^\ell/\mf{m}^{\ell+1}\ar[d]^{\mu_\ell}\\
		& H^0(\nP(G),\sO_{\nP(G)}(\ell))}.$$
	in which $\lambda_\ell$ is surjective and $\mu_{\ell}$ is injective. 	This implies that $\mu_{\ell}$ is also surjective and therefore
	$$H^0(\nP(G),\sO_{\nP(G)}(\ell))=\mf{m}^\ell/\mf{m}^{\ell+1}.$$
	This proves that the coordinate ring of $\nP(G)$ is the associated algebra $\gr_{\mf{m}}\sO_{\Sigma_k}$.
\end{proof}

As an interesting application of the description of tangent cones, we give a proof for the result that  secant varieties are arithmetical Cohen--Macaulay. This result was asserted in \cite{ENP} by a cohomological approach (we will also discuss it in details in Appendix \ref{sec:appendix}). First, we show that this can be reduced to proving the secant varieties are ``locally'' Cohen--Macaulay.

\begin{lemma}\label{lem:CM=>aCM}
Fix an integer $k \geq 0$. For each $0 \leq m \leq k$, assume that $\Sigma_m(\sL)$ is Cohen--Macaulay for any line bundle $\sL$ on $C$ with $\deg \sL \geq 2g+2m+1$. Then $\Sigma_k = \Sigma_k(L) \subseteq \nP^r = \nP H^0(C, L)$ is arithmetically Cohen--Macaulay for any line bundle $L$ on $C$ with $\deg L \geq 2g+2k+1$.
\end{lemma}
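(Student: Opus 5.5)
The plan is to use the cohomological characterization of arithmetical Cohen--Macaulayness recalled in the introduction, and to obtain the missing negative-degree vanishing from a Kodaira-type vanishing theorem for Cohen--Macaulay Du Bois varieties. By Theorem \ref{p:01}, $\Sigma_k \subseteq \nP^r$ is projectively normal and $H^i(\Sigma_k,\sO_{\Sigma_k}(\ell))=0$ for $i>0$ and $\ell>0$. So $\Sigma_k$ is arithmetically Cohen--Macaulay as soon as (\ref{eq:ACM}) holds, i.e. $H^i(\Sigma_k,\sO_{\Sigma_k}(-\ell))=0$ for $1\le i\le 2k$ and $\ell\ge 0$. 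Projective normality gives the depth at the vertex of the affine cone in degrees $\le 1$, and the vanishing of all $H^i(\sO_{\Sigma_k}(j))$ with $1\le i\le \dim\Sigma_k-1$ and $j\in\nZ$ gives the rest.

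\emph{Case $\ell\geq 1$.} Here I would use the hypothesis that $\Sigma_k$ is Cohen--Macaulay together with Theorem \ref{p:01}(1), which says $\Sigma_k$ has Du Bois singularities. For a projective variety $X$ that is both Cohen--Macaulay and Du Bois, Kodaira vanishing holds: $H^i(X,A^{-1})=0$ for $i<\dim X$ and every ample line bundle $A$. The argument I have in mind runs as follows.
\begin{enumerate}
\item Since $X$ is Du Bois, the natural maps $H^i(X^{an},\mathbb{C})\to H^i(X,\sO_X)$ are surjective. Applying this to cyclic covers gives that $H^i(X,A^{-\ell})\to H^i(X,A^{-\ell m})$ is injective for $m\gg 0$. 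This is the correct form of \cite[Theorem 10.42]{K}, namely $h^i(A^{-\ell})\ge h^i(A^{-1})$.
\item Since $X$ is Cohen--Macaulay, $\omega^\bullet_X=\omega_X[\dim X]$. Serre duality then gives $H^i(X,A^{-\ell m})\cong H^{\dim X-i}(X,\omega_X\otimes A^{\ell m})^*$.
\item The right-hand side vanishes for $m\gg 0$ by Serre vanishing, and injectivity transfers this vanishing to $H^i(X,A^{-\ell})$ for $i<\dim X$.
\end{enumerate}
Applying this with $X=\Sigma_k$ and $A=\sO_{\Sigma_k}(1)$ handles all $\ell\ge 1$ simultaneously.

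\emph{Case $\ell=0$.} I would take the vanishing $H^i(\Sigma_k,\sO_{\Sigma_k})=0$ for $i\ge1$ from the proof of \cite[Theorem 5.8]{ENP}; that part of the argument is unaffected by the typo. If a self-contained argument is wanted, I would argue by induction on $k$ using the short exact sequence
$$0\to I_{\Sigma_{k-1}/\Sigma_k}\to\sO_{\Sigma_k}\to\sO_{\Sigma_{k-1}}\to0.$$
The quotient $\sO_{\Sigma_{k-1}}$ is handled by the inductive hypothesis. The hypotheses of the lemma cover all $m\le k$, so $\Sigma_{k-1}$ is Cohen--Macaulay and the whole argument applies to it. The ideal sheaf term is handled by Proposition \ref{p:21}(1),(2) and Lemma \ref{lm:03}, which reduce it to cohomology of $T_{k+1}(\omega_C)$ on $C_{k+1}$.

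The main obstacle is making the first step rigorous: passing from surjectivity of $H^i(X^{an},\mathbb{C})\to H^i(\sO_X)$ for Du Bois $X$ to the injectivity of $H^i(A^{-\ell})\to H^i(A^{-\ell m})$. This requires that cyclic covers of $\Sigma_k$ branched along general members of $|A^{m}|$ are again Du Bois, or alternatively a direct appeal to the injectivity theorem for Du Bois pairs. I would cite the known result of Kov\'acs--Schwede--Smith / Patakfalvi that Cohen--Macaulay Du Bois varieties satisfy Kodaira vanishing, rather than reprove it.
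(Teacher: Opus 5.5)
For $\ell\ge1$ your argument is the paper's. Cohen--Macaulayness, Serre duality and Serre vanishing kill $H^i(\Sigma_k,\sO_{\Sigma_k}(-\ell))$ for $1\le i\le 2k$ and $\ell\gg0$. The corrected Du Bois injectivity then propagates this to every $\ell\ge1$; the paper quotes \cite[Theorem 9.12]{Kollar.Shafarevich} for that step.

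For $\ell=0$ the paper does not quote \cite{ENP} but proves the vanishing by induction on $k$. Quoting \cite{ENP} is consistent with the introduction. However, the statement you need is $H^i(\Sigma_k,\sO_{\Sigma_k})=0$ only for $1\le i\le 2k$, not for all $i\ge1$, since $h^{2k+1}(\Sigma_k,\sO_{\Sigma_k})=\binom{g+k}{k+1}$.

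Your self-contained fallback has a gap: it skips the one nontrivial step. The inductive hypothesis on $\Sigma_{k-1}$ says nothing about $H^{2k-1}(\Sigma_{k-1},\sO_{\Sigma_{k-1}})$. That is the top cohomology of $\Sigma_{k-1}$, dual to $H^0(\Sigma_{k-1},\omega_{\Sigma_{k-1}})$, and it is nonzero when $g\ge1$. Likewise, Proposition \ref{p:21}(2) and Lemma \ref{lm:03} give $H^i(\Sigma_k,I_{\Sigma_{k-1}/\Sigma_k})=0$ only for $1\le i\le 2k-1$, whereas $H^{2k}(\Sigma_k,I_{\Sigma_{k-1}/\Sigma_k})^*\cong H^1(C_{k+1},T_{k+1}(\omega_C))\cong S^kH^0(C,\omega_C)$.

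So the long exact sequence only yields vanishing for $1\le i\le 2k-2$. Beyond that, $H^{2k-1}(\Sigma_k,\sO_{\Sigma_k})$ and $H^{2k}(\Sigma_k,\sO_{\Sigma_k})$ are the kernel and cokernel of the connecting map $\tau\colon H^{2k-1}(\Sigma_{k-1},\sO_{\Sigma_{k-1}})\to H^{2k}(\Sigma_k,I_{\Sigma_{k-1}/\Sigma_k})$, and you must show that $\tau$ is an isomorphism. The paper addresses this in three steps. It reads $\omega_{\Sigma_{k-1}}=\beta_{k-1,*}\omega_{B^{k-1}(L)}(Z_{k-2})$ off the triangle (\ref{eq:tri}) for $\Sigma_{k-1}$. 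By Serre duality the source of $\tau$ is then dual to $H^0(C_k,T_k(\omega_C))$. Finally it compares this with $H^1(C_{k+1},T_{k+1}(\omega_C))$ via Lemma \ref{lm:03}, both being $S^kH^0(C,\omega_C)$. Without this comparison, your induction does not reach the two top degrees $2k-1$ and $2k$.
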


\begin{proof}
When $k=0$, the assertion is just the well-known theorem of Castelnuovo that $C \subseteq \nP H^0(L)$ is arithmetically Cohen--Macaulay. By induction on $k$, for each $0 \leq m \leq k-1$, we may assume that $\Sigma_m(\sL) \subseteq \nP H^0(\sL)$ is arithmetically Cohen--Macaulay when $\deg \sL \geq 2g+2m+1$. By Theorem \ref{p:01}, it is enough to show the vanishing 
\begin{equation}\label{eq:ACMinLemCM=>aCM}
H^i(\Sigma_{k},\sO_{\Sigma_{k}}(\ell))=0 \text{ for $1 \leq i \leq 2k$ and $\ell \leq 0$}.
\end{equation}
Since $\Sigma_k$ is Cohen--Macaulay, we have (\ref{eq:ACMinLemCM=>aCM}) for $\ell \ll 0$. 
By \cite[Theorem 9.12]{Kollar.Shafarevich}, we get (\ref{eq:ACMinLemCM=>aCM}) for $\ell \leq 0$. The only left case is when $\ell=0$. We only need to show that 
$$
H^i(\Sigma_{k},\sO_{\Sigma_{k}})=0~~\text{ for $1\leq i\leq 2k$}.
$$
By induction on $k$, we may assume that $H^i(\Sigma_{k-1}, \sO_{\Sigma_{k-1}})=0$ for $1 \leq i \leq 2k-2$. Chasing through the short exact sequence 
$$
0\longrightarrow I_{\Sigma_{k-1}/\Sigma_k}\longrightarrow \sO_{\Sigma_{k}}\longrightarrow \sO_{\Sigma_k}\longrightarrow 0,
$$
it is then sufficient to show that
	\begin{enumerate}
		\item $H^i(\Sigma_k,I_{\Sigma_{k-1}/\Sigma_k})=0, \text{ for }1\leq i\leq 2k-1, \text{ and }$
		\item the connection map 
		$$\tau \colon H^{2k-1}(\Sigma_k, \sO_{\Sigma_{k-1}})\longrightarrow H^{2k}(\Sigma_k,I_{\Sigma_{k-1}/\Sigma_k})$$
		is an isomorphism. 
	\end{enumerate}
By Proposition \ref{p:21}, we have 
$$
H^i(\Sigma_k,I_{\Sigma_{k-1}/\Sigma_k})^*=H^{2k+1-i}(C_{k+1},T_{k+1}(\omega_C)) \text{ for all }i\geq 0.
$$
Then (1) follows from Lemma \ref{lm:03} which gives 
$$
H^{2k+1-i}(C_{k+1},T_{k+1}(\omega_C))=0, \text{ for }1\leq i\leq 2k-1.
$$
For (2), applying the exact triangle (\ref{eq:tri}) for $\Sigma_{k-1}$ and taking the $(2k-1)$-th cohomology we compute the dualizing sheaf of $\Sigma_{k-1}$ as
	$$\omega_{\Sigma_{k-1}}=\beta_{k-1,*}\omega_{B^{k-1}(L)}(Z_{k-2}).$$ So we compute by Serre duality that
	$$H^{2k-1}(\Sigma_k, \sO_{\Sigma_{k-1}})^*=H^0(\Sigma_{k-1},\omega_{\Sigma_{k-1}})=H^0(B^{k-1}(L),\omega_{B^{k-1}(L)}(Z_{k-2}))=H^0(C_{k},T_{k}(\omega_C))$$
	which equals $H^1(C_{k+1},T_{k+1}(\omega_C))=H^{2k}(\Sigma_k,I_{\Sigma_{k-1}/\Sigma_k})^*$.	Thus $\tau$ is an isomorphism. 
\end{proof}

We show that the secant varieties are indeed locally Cohen--Macaulay. 

\begin{corollary}\label{cor:02} The secant variety  $\Sigma_k$ is Cohen--Macaulay.
\end{corollary}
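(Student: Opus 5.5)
We prove that $\Sigma_k$ is Cohen--Macaulay by induction on $k$, using the standard fact that a local ring $(R,\mf{m})$ is Cohen--Macaulay whenever its associated graded ring $\gr_{\mf{m}}R$ is Cohen--Macaulay. The case $k=0$ is trivial, since $\Sigma_0=C$ is a nonsingular curve. The induction hypothesis is the following: for every $k'<k$ and every line bundle $\sL$ on $C$ with $\deg \sL\geq 2g+2k'+1$, the variety $\Sigma_{k'}(\sL)$ is Cohen--Macaulay. Lemma \ref{lem:CM=>aCM} then upgrades this to arithmetic Cohen--Macaulayness: $\Sigma_{k'}(\sL)\subseteq \nP H^0(\sL)$ is arithmetically Cohen--Macaulay for all such $k'<k$ and $\sL$.

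Now fix a closed point $x\in\Sigma_k$. If $x\notin\Sigma_{k-1}$, then $\Sigma_k$ is nonsingular at $x$ and there is nothing to prove. Otherwise $x\in\Sigma_m\setminus\Sigma_{m-1}$ for a unique $0\leq m\leq k-1$, and we let $\xi=\xi_{m+1,x}$. By Corollary \ref{cor:01}, $G=\gr_{\mf{m}}\sO_{\Sigma_k,x}$ is the homogeneous coordinate ring of $\nP(G)\subseteq\nP^{r-1}$. By Theorem \ref{thm:01}, $\nP(G)$ is a cone with vertex the linear space $\nP(T^*_x\Sigma_m)$, of dimension $2m$, over $\Sigma_{k-m-1}(L(-2\xi))$. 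Choosing coordinates adapted to the splitting $T^*_x\nP^r=T^*_x\Sigma_m\oplus H^0(E_{k-m,L(-2\xi)})=T^*_x\Sigma_m\oplus H^0(L(-2\xi))$, we obtain
$$
G\cong S\bigl(\Sigma_{k-m-1}(L(-2\xi))\bigr)\otimes_{\Bbbk}\Bbbk[t_0,\dots,t_{2m}],
$$
where $S(\cdot)$ denotes the homogeneous coordinate ring in $\nP H^0(L(-2\xi))$. The identification of the ideals can be read off from the scheme-theoretic statement in Theorem \ref{thm:01}. Concretely, $G$ is the image of $\Sym^* T^*_x\nP^r$ in $\bigoplus_\ell H^0(F,S^\ell N^*_{F/B^k})$ (Lemma \ref{lem:gammaisomorphism}), and $N^*_{F/B^k}=\sO_F^{\oplus 2m+1}\oplus E_{k-m,L(-2\xi)}$. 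So $G$ is the ring of sections of the $S^\ell E_{k-m,L(-2\xi)}$, which is the coordinate ring of $\Sigma_{k-m-1}(L(-2\xi))$ by projective normality, tensored with a polynomial ring in $2m+1$ variables.

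Since $\deg L(-2\xi)=\deg L-2m-2\geq 2g+2(k-m-1)+1$ and $k-m-1<k$, the induction hypothesis together with Lemma \ref{lem:CM=>aCM} shows that $S(\Sigma_{k-m-1}(L(-2\xi)))$ is a Cohen--Macaulay ring. A polynomial extension of a Cohen--Macaulay ring is Cohen--Macaulay, so $G$ is Cohen--Macaulay. Hence $\sO_{\Sigma_k,x}$ is Cohen--Macaulay at every singular point, and $\Sigma_k$ is Cohen--Macaulay. This closes the induction, and Lemma \ref{lem:CM=>aCM} then also gives arithmetic Cohen--Macaulayness of $\Sigma_k$ itself.

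The main point needing care is the passage from ``$\nP(G)$ is a cone over an arithmetically Cohen--Macaulay variety'' to ``$G$ is Cohen--Macaulay''. This requires $G$ to be exactly the tensor product coordinate ring above, including saturation in low degrees. That is precisely what Corollary \ref{cor:01} provides together with the splitting of $N^*_{F/B^k}$. One also has to check that the induction is well-founded, since it runs over all line bundles simultaneously. This is fine because the hypothesis of Lemma \ref{lem:CM=>aCM} involves only the indices $m'\leq k-m-1<k$.
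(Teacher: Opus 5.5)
Your proposal is correct and follows essentially the same route as the paper. You induct on $k$, upgrade the hypothesis to arithmetic Cohen--Macaulayness via Lemma~\ref{lem:CM=>aCM}, identify $\gr_{\mf{m}}\sO_{\Sigma_k,x}$ with the coordinate ring of a cone over $\Sigma_{k-m-1}(L(-2\xi))$ via Theorem~\ref{thm:01} and Corollary~\ref{cor:01}, and conclude with the fact recorded in Remark~\ref{rem:grCM}. Your explicit description $G\cong S(\Sigma_{k-m-1}(L(-2\xi)))\otimes_{\Bbbk}\Bbbk[t_0,\dots,t_{2m}]$, read off from Lemma~\ref{lem:gammaisomorphism} and the splitting of $N^*_{F/B^k}$, is simply a concrete form of the paper's observation that a cone over an arithmetically Cohen--Macaulay variety is arithmetically Cohen--Macaulay.
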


\begin{proof} 
We proceed by induction on $k$. When $k=0$, the secant variety $\Sigma_0$ is just the curve $C$ itself, so $\Sigma_0$ is Cohen--Macaulay. By induction, we may assume that for each $0 \leq m \leq k-1$, the secant variety $\Sigma_m(\sL)$ is Cohen--Macaulay for any line bundle $\sL$ on $C$ with $\deg \sL \geq 2g+2m+1$. By Lemma \ref{lem:CM=>aCM},  $\Sigma_m(\sL)$ is arithmetically Cohen--Macaulay in $\nP H^0(C, \sL)$. Take a point $x\in \Sigma_k$. If $x\in \Sigma_k-\Sigma_{k-1}$, then $\Sigma_k$ is nonsingular at $x$ and thus it is Cohen--Macaulay at $x$. So we consider the case when $x\in \Sigma_{k-1}$.  By Theorem \ref{thm:01}, the tangent cone $\nP(G)$ of $\Sigma_k$ at $x$ is the cone over the secant variety $\Sigma_{k-m-1}(L(-2\xi))$ in the space $\nP(T^*_x\nP^r)$. But by induction, $\Sigma_{k-m-1}(L(-2\xi))$ is arithmetical Cohen--Macaulay in $\nP H^0(C, L(-2\xi))$, and thus, $\nP(G)$ is arithmetical Cohen--Macaulay in $\nP^{r-1}$. Corollary \ref{cor:01} says that the associated graded ring $\gr_{\mf{m}}(\sO_{\Sigma_{k}})$ is the coordinate ring of $\nP(G)$. Thus $\gr_{\mf{m}}(\sO_{\Sigma_{k}})$ is Cohen--Macaulay, and therefore, $\Sigma_k$ is Cohen--Macaulay at the point $x$ (see Remark \ref{rem:grCM} below).
\end{proof}

\begin{remark}\label{rem:grCM}
In the proof of Corollary \ref{cor:02}, we use a well-known commutative algebra fact: For a local noetherian ring $(R,\mf{m},k)$ such that the residue class field $k$ has characteristic zero, if the associated graded ring $\gr_{\mf{m}}(R)=\oplus_{l\geq 0} \mf{m}^l/\mf{m}^{l+1}$ is Cohen--Macaulay, then so is the local ring $R$. As we could not find a suitable reference, we give a sketch of the proof. Since $\gr_{\mf{m}}R$ is generated by $\mf{m}/\mf{m}^2$, we can take general linear forms $s^*_1,\cdots, s^*_n$, where $n=\dim R$ as a regular sequence of $\gr_{\mf{m}}R$. Then they determine a regular sequence $s_1,\cdots, s_n\in \mf{m}$, which proves the result. 
\end{remark}

\begin{corollary}\label{cor:Sigma=aCM}
The secant variety $\Sigma_k \subseteq \nP^r$ is arithmetically Cohen--Macaulay, i.e., one has the following:
\begin{enumerate}
\item $\Sigma_k \subseteq \nP^r$ is projectively normal.
\item $H^i(\Sigma_k, \sO_{\Sigma_k}(\ell))=0$ for $1 \leq i \leq 2k$ and $\ell \in \nZ$. 
\end{enumerate}
\end{corollary}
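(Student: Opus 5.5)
This is a direct combination of Corollary \ref{cor:02} and Lemma \ref{lem:CM=>aCM}. Corollary \ref{cor:02} was proved for an arbitrary curve $C$ and an arbitrary line bundle of degree at least $2g+2k+1$. Applying it for every $0 \leq m \leq k$ shows that $\Sigma_m(\sL)$ is Cohen--Macaulay whenever $\deg \sL \geq 2g+2m+1$. This is exactly the hypothesis of Lemma \ref{lem:CM=>aCM}, so $\Sigma_k \subseteq \nP^r$ is arithmetically Cohen--Macaulay. One point to check is that the induction in Corollary \ref{cor:02} used Lemma \ref{lem:CM=>aCM} only for secant varieties of order strictly less than $k$. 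Hence there is no circularity, and the two results can be run as one simultaneous induction on $k$.

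It remains to unpack arithmetic Cohen--Macaulayness into the two listed conditions.

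\textbf{Condition (1).} Projective normality is Theorem \ref{p:01}(2).

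\textbf{Condition (2).} I split the vanishing $H^i(\Sigma_k,\sO_{\Sigma_k}(\ell))=0$ for $1 \leq i \leq 2k$ according to the sign of $\ell$.
\begin{itemize}
\item For $\ell>0$, it is Theorem \ref{p:01}(3).
\item For $\ell \ll 0$, since $\Sigma_k$ is Cohen--Macaulay of dimension $2k+1$, Serre duality gives $H^i(\Sigma_k,\sO_{\Sigma_k}(\ell)) \cong H^{2k+1-i}(\Sigma_k,\omega_{\Sigma_k}(-\ell))^*$, and Serre vanishing kills the right-hand side when $i \leq 2k$.
\item For the intermediate negative values, I would pass to a general hyperplane section. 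By Bertini it is again Cohen--Macaulay, and the restriction sequences give the vanishing by descending induction on $\ell$; equivalently, one cites \cite[Theorem 9.12]{Kollar.Shafarevich} as in the proof of Lemma \ref{lem:CM=>aCM}.
\item For $\ell=0$, I chase the sequence $0\to I_{\Sigma_{k-1}/\Sigma_k}\to\sO_{\Sigma_k}\to\sO_{\Sigma_{k-1}}\to0$. The cohomology of the ideal is computed by Proposition \ref{p:21}(2) and Lemma \ref{lm:03}, and the vanishing of $H^i(\Sigma_{k-1},\sO_{\Sigma_{k-1}})$ comes from induction. The connecting map in degree $2k-1$ is an isomorphism by the duality computation in Lemma \ref{lem:CM=>aCM}.
\end{itemize}

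The only real subtlety is bookkeeping: all of this is already carried out inside the proof of Lemma \ref{lem:CM=>aCM}, so the corollary follows directly once its hypothesis is supplied by Corollary \ref{cor:02}. There is no genuine obstacle beyond confirming that the inductive hypotheses line up.
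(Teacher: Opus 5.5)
Your proposal is correct and is exactly the paper's proof. The corollary follows immediately by feeding Corollary \ref{cor:02}, which holds for all $m\le k$ and all admissible line bundles, into Lemma \ref{lem:CM=>aCM}, and your check that the induction is not circular is accurate.

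One caution about your optional unpacking: the hyperplane-section induction for intermediate negative $\ell$ is not ``equivalent'' to citing \cite[Theorem 9.12]{Kollar.Shafarevich}. Cohen--Macaulayness alone does not force $H^i(\Sigma_k,\sO_{\Sigma_k}(\ell))=0$ for all $\ell<0$, and the restriction sequences require the same vanishing on the hyperplane section, so that argument is circular. That step genuinely needs the Kodaira-type input for the Du Bois variety $\Sigma_k$ (Theorem \ref{p:01}(1)), which is what the citation supplies. Since you ultimately defer to the lemma, this does not affect your proof.
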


\begin{proof}
Immediate from Lemma \ref{lem:CM=>aCM} and Corollary \ref{cor:02}.
\end{proof}

Finally, we consider a cone $\overline{P} \subseteq \nP^{r+m}$ over the secant variety $\Sigma_k \subseteq \nP^r$ with the vertex $\Lambda \subseteq \nP^{r+m}$, which is a linear subspace of dimension $m-1$. Let
$$
P:=\nP (\sO_{\Sigma_k}(1) \oplus \sO_{\Sigma_k}^{\oplus m})
$$
with canonical projection $\tau \colon P \to \Sigma_k$. Then $\lvert \sO_P(1) \rvert$ induces a birational morphism 
$$
\psi \colon P \longrightarrow \overline{P} \subseteq \nP^{r+m}
$$
contracting an effective divisor $E:=\nP(\sO_{\Sigma_k}^{\oplus m}) \subseteq P$ to to the vertex $\Lambda \subseteq \nP^{r+m}$ of $\overline{P}$. Note that $\psi$ is the blow-up of $\overline{P}$ along $\Lambda$ with exceptional divisor $E$.

\begin{theorem}\label{thm:coneisDB}
The cone $\overline{P}$ has normal Cohen--Macaulay Du Bois singularities, and $\overline{P} \subseteq \nP^{r+m}$ is arithmetically Cohen--Macaulay. Consequently, the affine cone over $\Sigma_k$ and all the tangent cones to $\Sigma_k$ have normal Cohen--Macaulay Du Bois singularities.
\end{theorem}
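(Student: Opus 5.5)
Since $\Sigma_k$ is Cohen--Macaulay by Corollary \ref{cor:02}, and $\Sigma_k \subseteq \nP^r$ is arithmetically Cohen--Macaulay by Corollary \ref{cor:Sigma=aCM}, the plan is to reduce all assertions about the cone $\overline{P}$ to properties of the affine cone over $\Sigma_k$.

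First, note that $\overline{P}$ is the join of $\Sigma_k$ with the linear space $\Lambda$. Its homogeneous coordinate ring is therefore $S(\overline{P}) = S(\Sigma_k)[y_1,\dots,y_m]$, where $S(\Sigma_k)=\bigoplus_\ell H^0(\sO_{\Sigma_k}(\ell))$; here we use projective normality of $\Sigma_k$, by Theorem \ref{p:01}. Because $S(\Sigma_k)$ is Cohen--Macaulay and normal, the same holds for a polynomial ring over it. Hence $\overline{P}\subseteq\nP^{r+m}$ is arithmetically Cohen--Macaulay and projectively normal. In particular, every local ring of $\overline{P}$ is a localization of a polynomial extension of $S(\Sigma_k)$, or of a localization of it, so $\overline{P}$ is normal and Cohen--Macaulay. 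The same argument with $m=0$ handles the affine cone over $\Sigma_k$.

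For the Du Bois property, I would use the criterion for cones: the affine cone $\operatorname{Spec} S(X)$ over a projectively normal Du Bois variety $X$ is Du Bois if and only if $H^i(X,\sO_X(\ell))=0$ for all $i>0$ and $\ell>0$. This is the standard consequence of the Du Bois analysis of cones via the blowup at the vertex (Kollár's book, or Graf--Kovács / Ma). By Theorem \ref{p:01}(1),(3), the hypotheses hold for $X=\Sigma_k$. So the affine cone $\operatorname{Spec} S(\Sigma_k)$ is Du Bois. Next, $\operatorname{Spec} S(\overline{P})=\operatorname{Spec} S(\Sigma_k)\times\mathbb{A}^m$, and a product with affine space is Du Bois. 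Since $\overline{P}$ is locally the quotient of its affine cone away from the vertex, i.e.\ the affine cone minus the origin is a $\mathbb{G}_m$-bundle over $\overline{P}$, and Du Bois singularities descend along smooth morphisms, $\overline{P}$ is Du Bois.

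Alternatively, one can argue directly with $\psi\colon P\to\overline P$. $P$ is a projective bundle over $\Sigma_k$, so it is Du Bois, and $E\cong \Sigma_k\times\nP^{m-1}$ and $\Lambda$ are Du Bois too. One then checks that $R\psi_*\sO_E\cong \sO_\Lambda$, which follows from $H^i(\Sigma_k,\sO_{\Sigma_k})=0$ for $i>0$ (Corollary \ref{cor:Sigma=aCM}) together with the $(\ell>0)$ vanishing of Theorem \ref{p:01}. The Du Bois property of $\overline P$ then follows from the standard criterion for the square $(P,E)\to(\overline P,\Lambda)$. This cohomological identification of $R\psi_*\sO_E$ and $R\psi_*\sO_P$ is the step I expect to need the most care.

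Finally, the tangent cones. By Theorem \ref{thm:01} and Corollary \ref{cor:01}, the projectivized tangent cone at $x\in\Sigma_m\setminus\Sigma_{m-1}$ is such a cone $\overline{P}$ over $\Sigma_{k-m-1}(L(-2\xi))$, with $\deg L(-2\xi)\ge 2g+2(k-m-1)+1$. The (affine) tangent cone $\operatorname{Spec}\gr_{\mf m}\sO_{\Sigma_k}$ is the affine cone over it. The previous paragraphs, applied to $\Sigma_{k-m-1}(L(-2\xi))$, give the conclusion.
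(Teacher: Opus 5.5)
Your treatment of normality and (arithmetic) Cohen--Macaulayness matches the paper's: $S(\overline{P})=S(\Sigma_k)[y_1,\dots,y_m]$, and $S(\Sigma_k)$ is normal and Cohen--Macaulay.

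Your first Du Bois argument is correct, but it is organized differently from the paper's. The paper applies the Koll\'ar--Kov\'acs criterion once, to the blow-up $\psi\colon P\to\overline{P}$ along the whole vertex $\Lambda$. It verifies $R^i\psi_*\sO_P(-E)=0$ for $i>0$ using $\sO_P(-E)=\sO_P(-1)\otimes\tau^*\sO_{\Sigma_k}(1)$. This identity turns $H^i(P,\sO_P(\ell H-E))$ into cohomology of a direct sum of $\sO_{\Sigma_k}(j)$ with $j\geq 1$, which vanishes.

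You instead quote the Du Bois criterion for affine cones, whose proof is exactly this computation with $m=1$. You then pass to $\overline{P}$ through two permanence properties: stability under products with $\mathbb{A}^m$, and descent along the smooth surjection $\operatorname{Spec}S(\overline{P})\setminus\{0\}\to\overline{P}$. Equivalently, $\overline{P}$ is covered by charts $(\Sigma_k\cap D_+(x_i))\times\mathbb{A}^m$ and $(\text{affine cone over }\Sigma_k)\times\mathbb{A}^{m-1}$.

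The paper's route is more self-contained. Yours makes the local structure of $\overline{P}$ transparent. It also spells out the deduction for the affine cone and for the tangent cones (via Theorem \ref{thm:01} and Corollary \ref{cor:01}), which the paper leaves implicit.

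Your alternative argument via $\psi$, however, rests on a false claim. Corollary \ref{cor:Sigma=aCM} gives $H^i(\Sigma_k,\sO_{\Sigma_k})=0$ only for $1\leq i\leq 2k$. The top group $H^{2k+1}(\Sigma_k,\sO_{\Sigma_k})\cong H^0(\Sigma_k,\omega_{\Sigma_k})^*$ has dimension $\binom{g+k}{k+1}$, which is nonzero once $g\geq 1$. Since $\psi|_E$ is the projection $\Sigma_k\times\nP^{m-1}\to\Lambda$, you get $R^{2k+1}\psi_*\sO_E\cong H^{2k+1}(\Sigma_k,\sO_{\Sigma_k})\otimes\sO_\Lambda\neq 0$. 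So $R\psi_*\sO_E\not\cong\sO_\Lambda$, and likewise $R^{2k+1}\psi_*\sO_P\neq 0$.

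What the Du Bois criterion for $(P,E)\to(\overline{P},\Lambda)$ actually needs is only that $R^i\psi_*\sO_P\to R^i\psi_*\sO_E$ be an isomorphism for $i\geq 1$. Equivalently, $R^i\psi_*\sO_P(-E)=0$ for $i>0$. This uses only the vanishing for $\ell>0$ and is precisely the paper's proof. That paragraph should be repaired this way or dropped; your first argument stands without it.
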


\begin{proof}
Since $\Sigma_k \subseteq \nP^r$ is arithmetically Cohen--Macaulay, it follows that $\overline{P} \subseteq \nP^{r+m}$ is also arithmetically Cohen--Macaulay. In particular, $\overline{P}$ has normal Cohen--Macaulay singularities. It only remains to show that $\overline{P}$ has Du Bois singularities. Notice that $P, E$, and $\Lambda$ have Du Bois singularities. By Koll\'{a}r--Kov\'{a}cs criterion \cite[Corollary 6.28]{K}, it is enough to check that $R^i \psi_* \sO_P(-E)=0$ for $i > 0$, which is equivalent to that
$H^i(P, \sO_P(\ell H-E))=0$ for $i>0$ and $\ell \gg 0$, where $H$ is a tautological divisor on $P$. Since $\sO_P(-E) = \sO_P(-1) \otimes \tau^* \sO_{\Sigma_k}(1)$ and $H^i(\Sigma_k, \sO_{\Sigma_k}(m))=0$ for $i>0$ and $m >0$, we have
\[
H^i(P, \sO_P(\ell H-E)) = H^i(\Sigma_k, S^{\ell-1} (\sO_{\Sigma_k}(1) \oplus \sO_{\Sigma_k}^{\oplus m}) \otimes \sO_{\Sigma_k}(1)) = 0~~\text{ for $i>0$ and $\ell > 0$}. \qedhere
\]
\end{proof}

\begin{remark}\label{rem:KS}
Karen Smith personally asked the first author of the paper whether the affine cones over the secant varieties of curves have Du Bois singularities. The second statement of Theorem \ref{thm:coneisDB} provides an answer to this question.
\end{remark}

\section{Cohomology of secant sheaves}\label{sec:cohomology}

\noindent In this section, we compute the cohomology groups of symmetric powers of secant sheaves. Recall that the secant sheaf $E_{k+1,L}$ on $C_{k+1}$ associated to $L$ is a locally free sheaf of rank $k+1$. Let us continue to assume that $\deg L\geq 2g+2k+1$. Recall also that there is a birational morhpism $$\beta_k \colon B^k(L) \longrightarrow \Sigma_k\subseteq \nP^r,$$
where $B^k(L)=\nP(E_{k+1,L})$ is the secant bundle over $C_{k+1}$ equipped with a projection $\pi_k \colon B^k(L) \to C_{k+1}$. One has a commutative diagram 
\begin{equation}\label{eq:16}
		\begin{split}
			\xymatrixcolsep{0.7in}
			\xymatrix{
				B^{k-1}(L)\times C \ar[r]^-{\alpha_{k,k-1}} \ar[d]_-{\pr_1} & B^k(L) \ar[d]^-{\beta_k} \\
				B^{k-1}(L) \ar[r]_-{\beta_{k-1}} & \Sigma_{k-1},
			}
		\end{split}
\end{equation}

We start with the following simple observation. 

\begin{lemma}\label{lm:01} 
Consider the morphism
	$$\alpha_{k,k-1} \colon B^{k-1}(L)\times C\longrightarrow B^k(L)$$
	whose image is $Z_{k-1}$. Then $\alpha_{k,k-1}$ is finite  and 
	$$\alpha^*_{k,k-1}\omega_{Z_{k-1}}=\alpha^*_{k,k-1}\omega_{B^k(L)}(Z_{k-1})=\omega_{B^{k-1}(L)}(Z_{k-2})\boxtimes \omega_C.$$
\end{lemma}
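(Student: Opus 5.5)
Finiteness of $\alpha_{k,k-1}$ comes first. It is proper, since $B^{k-1}(L)\times C$ is projective, so it suffices to check quasi-finiteness. A point of $B^{k-1}(L)\times C$ is a pair $(y,p)$ with $y\in\langle\xi'\rangle$, $\xi'\in C_k$, and $p\in C$. Its image lies in the fiber of $\pi_k$ over $\xi'+p$, which is the plane $\nP(H^0(L|_{\xi'+p}))$. So a fixed point $z\in Z_{k-1}$ over $\eta=\pi_k(z)\in C_{k+1}$ has preimages only of the form $\xi'=\eta-p$ with $p\le\eta$. There are at most $k+1$ such $p$, and for each one the point $y$ is determined by $z$, because $\nP(H^0(L|_{\xi'}))\hookrightarrow\nP(H^0(L|_{\eta}))$ is a linear embedding. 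Hence the fibers are finite and $\alpha_{k,k-1}$ is finite.

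The first equality follows from adjunction. The variety $B^k(L)$ is smooth and $Z_{k-1}$ is a prime Cartier divisor on it, so $\omega_{Z_{k-1}}=\omega_{B^k(L)}(Z_{k-1})|_{Z_{k-1}}$. Pulling back along $\alpha_{k,k-1}$, which factors through $Z_{k-1}$, gives the equality.

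For the second equality I would use the formula $\omega_{B^k(L)}(Z_{k-1})=\pi_k^*T_{k+1}(\omega_C)$ recalled in Section \ref{sec:prelim}, and likewise $\omega_{B^{k-1}(L)}(Z_{k-2})=\pi_{k-1}^*T_k(\omega_C)$. The map $\alpha_{k,k-1}$ lies over $\sigma_{k+1}\colon C_k\times C\to C_{k+1}$, in the sense that
\[
\pi_k\circ\alpha_{k,k-1}=\sigma_{k+1}\circ(\pi_{k-1}\times\id_C).
\]
So it suffices to prove the line bundle identity $\sigma_{k+1}^*T_{k+1}(\omega_C)=T_k(\omega_C)\boxtimes\omega_C$ on $C_k\times C$. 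This identity holds for any line bundle $M$ in place of $\omega_C$. To see it, pull both sides back to $C^{k+1}$ along the quotient $C^{k}\times C\to C_k\times C$. Both pullbacks become $M^{\boxtimes(k+1)}$, with the same $\mathfrak S_k$-linearization. The bundle $T_{k+1}(M)$ is defined as the invariant descent of $M^{\boxtimes(k+1)}$, and its pullback to $C^{k+1}$ is $M^{\boxtimes(k+1)}$ with its natural linearization. Since descent under $C^k\times C\to C_k\times C$ is unique, the two line bundles on $C_k\times C$ agree. Combining these steps yields $\alpha^*_{k,k-1}\omega_{B^k(L)}(Z_{k-1})=\omega_{B^{k-1}(L)}(Z_{k-2})\boxtimes\omega_C$.

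The step that needs the most care is the compatibility $\pi_k\circ\alpha_{k,k-1}=\sigma_{k+1}\circ(\pi_{k-1}\times\id)$ together with the identity $\sigma_{k+1}^*T_{k+1}(M)=T_k(M)\boxtimes M$. These are checked through the quotient maps from Cartesian products as described above. The rest of the argument is formal.
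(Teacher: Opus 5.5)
Your proof is correct and follows essentially the paper's argument: adjunction on $Z_{k-1}$, the formula $\omega_{B^k(L)}(Z_{k-1})=\pi_k^*T_{k+1}(\omega_C)$, the commutative square $\pi_k\circ\alpha_{k,k-1}=\sigma_{k+1}\circ(\pi_{k-1}\times\mathrm{id})$, and the identity $\sigma_{k+1}^*T_{k+1}(\omega_C)=T_k(\omega_C)\boxtimes\omega_C$, checked on $C^{k+1}$. Your proper-plus-quasi-finite argument for the finiteness of $\alpha_{k,k-1}$ is also sound, and it supplies a point that the paper asserts without proof.
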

\begin{proof} We have a commutative diagram 
\begin{equation}\label{eq:16}
		\begin{split}
			\xymatrixcolsep{0.7in}
			\xymatrix{
				B^{k-1}(L)\times C \ar[r]^-{\alpha_{k,k-1}} \ar[d]_-{\pi_{k-1}\times \id} & B^k(L) \ar[d]^-{\pi_k} \\
				C_{k}\times C \ar[r]_-{\sigma_{k-1,1}} & C_{k+1},
			}
		\end{split}
\end{equation}
By adjunction formula, $\omega_{Z_{k-1}}=\omega_{B^k(L)}(Z_{k-1})|_{Z_{k-1}}$. But we know that $\omega_{B^k(L)}(Z_{k-1})=\pi^*_kT_{k+1}(\omega_C)$. So it is enough to show the equality 
$$(\pi_{k-1}\times \id)^*(\sigma_{k-1,1}^*(T_{k+1}(\omega_C)))=\omega_{B^{k-1}(L)}(Z_{k-2})\boxtimes \omega_C.$$
This is true because 
\[
\sigma_{k-1,1}^*(T_{k+1}(\omega_C))=T_{k}(\omega_C)\boxtimes \omega_C, \text{ and } \pi^*_{k-1}T_{k}(\omega_C)=\omega_{B^{k-1}(L)}(Z_{k-2}).\qedhere
\]	
\end{proof}

The following is the main result of this section, which gives an answer to \cite[Problem 6.2]{ENP}. 

\begin{theorem}\label{thm:cohomologysecantsheaf}
For each integer $0 \leq i \leq k$, let  $\Sigma_i:=\Sigma_{i}(L)$ be the $i$-th secant variety of $C$ in the space $\nP^r = \nP (H^0(L))$, where $L$ is a line bundle on $C$ with $\deg L \geq 2g+2k+1$.
One has the following:
	\begin{enumerate}
		\item $R^i \beta_{k,*} \sO_{B^k(L)} = \begin{cases} \wedge^i H^1(C, \sO_C) \otimes \sO_{\Sigma_{k-i}} & \text{for $0 \leq i \leq k$} \\ 0 & \text{for $i \geq k+1$}. \end{cases}$
		\item For integers $0 \leq i \leq k+1$ and $\ell > 0$, we have
		$$
		H^i(C_{k+1}, S^{\ell} E_{k+1, L}) = H^i(B^k(L), \sO_{B^k}(\ell))=\wedge^i H^1(C, \sO_C) \otimes H^0(\Sigma_{k-i}, \sO_{\Sigma_{k-i}}(\ell)).
		$$
		In particular,  $H^{k+1}(C_{k+1}, S^{\ell} E_{k+1, L}) =0$ for $\ell > 0$. 
	\end{enumerate}
\end{theorem}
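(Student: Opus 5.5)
We argue by induction on $k$; the case $k=0$ is trivial, since then $\beta_0$ is the embedding $C\hookrightarrow\nP^r$. For (1), we push forward along $\beta_k$ the short exact sequence
$$
0\longrightarrow \sO_{B^k}(-Z_{k-1})\longrightarrow \sO_{B^k}\longrightarrow \sO_{Z_{k-1}}\longrightarrow 0 .
$$
By Proposition \ref{p:21}(1), $R\beta_{k,*}\sO_{B^k}(-Z_{k-1})\cong_{\qis} I_{\Sigma_{k-1}/\Sigma_k}$, so this sheaf has no higher direct images. Hence for $i\ge 1$ we get $R^i\beta_{k,*}\sO_{B^k}\cong R^i\beta_{k,*}\sO_{Z_{k-1}}$. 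In degree $0$ we already know $\beta_{k,*}\sO_{B^k}=\sO_{\Sigma_k}$ by normality.

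To compute $R\beta_{k,*}\sO_{Z_{k-1}}$, we use the finite morphism $\alpha:=\alpha_{k,k-1}\colon B^{k-1}\times C\to Z_{k-1}$ together with the diagram (\ref{eq:16}), which gives $\beta_k\circ\alpha=\beta_{k-1}\circ\pr_1$. If $Z_{k-1}$ were the image with $\alpha_*\sO=\sO_{Z_{k-1}}$, we would get
$$
R\beta_{k,*}\sO_{Z_{k-1}}=R\beta_{k-1,*}R\pr_{1,*}\sO_{B^{k-1}\times C}=R\beta_{k-1,*}\sO_{B^{k-1}}\otimes\bigl(\Bbbk\oplus H^1(\sO_C)[-1]\bigr).
$$
Induction would then give the answer: combining $\wedge^j H^1(\sO_C)\otimes\sO_{\Sigma_{k-1-j}}$ with $\wedge^{i-1-j}$ and so on.

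That formula, however, is a tensor product, not an exterior power. So $\alpha$ is not birational onto $Z_{k-1}$ with $\alpha_*\sO=\sO_{Z_{k-1}}$: $Z_{k-1}$ is non-normal along deeper strata. We therefore need the precise relation between $\sO_{Z_{k-1}}$ and $\alpha_*\sO$.

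A cleaner route is to use duality and Lemma \ref{lm:01}. Write $\omega_{Z_{k-1}}$ and apply $R\sHom(-,\omega^\bullet)$. By Grothendieck duality for $\beta_k$, computing $R\beta_{k,*}\sO_{B^k}$ is equivalent to computing $R\beta_{k,*}\omega_{B^k}$, and $\omega_{B^k}=\omega_{B^k}(Z_{k-1})(-Z_{k-1})$. Pushing forward the sequence
$$
0\to\omega_{B^k}\to\omega_{B^k}(Z_{k-1})\to\omega_{Z_{k-1}}\to 0
$$
and using the triangle (\ref{eq:tri}), we identify $R\beta_{k,*}\omega_{B^k}(Z_{k-1})[2k+1]$ with the cone of $\omega^\bullet_{\Sigma_{k-1}}\to\omega^\bullet_{\Sigma_k}$. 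By Corollary \ref{cor:02} both $\Sigma_k$ and $\Sigma_{k-1}$ are Cohen--Macaulay, so their dualizing complexes are the sheaves $\omega_{\Sigma_k}$, $\omega_{\Sigma_{k-1}}$ in a single degree. This shows that $R\beta_{k,*}\omega_{B^k}(Z_{k-1})$ has cohomology only in degrees $0,1$: the first is $\omega_{\Sigma_k}$ and the second is $\coker(\omega_{\Sigma_{k-1}}\to\omega_{\Sigma_k}[\ldots])$, which we track by degree.

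Combining this with Grothendieck–Verdier duality, $R\beta_{k,*}\sO_{B^k}=R\sHom(R\beta_{k,*}\omega_{B^k}[2k+1],\omega^\bullet_{\Sigma_k})$. We then compute $R\beta_{k,*}\omega_{B^k}$ recursively through $\omega_{Z_{k-1}}$, whose pullback by $\alpha$ is $\omega_{B^{k-1}}(Z_{k-2})\boxtimes\omega_C$. For this factor, $R\pr_{1,*}$ produces $\omega_{B^{k-1}}(Z_{k-2})\otimes(H^0(\omega_C)\oplus H^1(\omega_C)[-1])$, and the trace map of the finite map $\alpha$ (which is $\mathfrak S$-symmetric in the extra point) should cut the tensor structure down to the exterior-power structure. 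The resulting formula is
$$
R\beta_{k,*}\sO_{B^k}=\bigoplus_i \wedge^i H^1(\sO_C)\otimes\sO_{\Sigma_{k-i}}[-i],
$$
and we expect to establish it by induction together with an Euler-characteristic or length check along generic points of each stratum $\Sigma_{k-i}\setminus\Sigma_{k-i-1}$. There the fiber is $F\cong C_i$, so by the theorem on formal functions the stalks of $R^j\beta_{k,*}\sO_{B^k}$ at a general point are governed by $H^j(C_i,\sO)=\wedge^jH^1(\sO_C)$ for $j\le i$. This fixes the ranks, and then reflexivity/Cohen--Macaulayness of the target sheaves (supported on the normal CM varieties $\Sigma_{k-i}$) pins down the sheaves themselves. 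Vanishing for $i\ge k+1$ follows because fibers have dimension at most $k$.

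Part (2) follows from (1) by the Leray spectral sequence. We have $H^i(C_{k+1},S^\ell E)=H^i(B^k,\sO(\ell))$, and $\sO_{B^k}(\ell)=\beta_k^*\sO(\ell)$. The relevant terms are $H^p(\Sigma_{k-q},\sO(\ell))\otimes\wedge^qH^1$, and these vanish for $p>0$ by Theorem \ref{p:01}(3) applied to each $\Sigma_{k-q}$. The spectral sequence therefore degenerates, giving $H^i=\wedge^iH^1(\sO_C)\otimes H^0(\Sigma_{k-i},\sO(\ell))$. For $i=k+1$ the term involves $R^{k+1}\beta_{k,*}\sO_{B^k}=0$, so that group vanishes. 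The main obstacle is (1): identifying the higher direct images as honest direct sums of structure sheaves, rather than as extensions or non-reduced sheaves. For this we will rely on the formal-function computation along $F\cong C_{k-m}$, using that the conormal bundle splits and that $H^j(F,S^\ell N^*_{F/B^k})$ can be controlled as in Lemma \ref{lem:gammaisomorphism}.
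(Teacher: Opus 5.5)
Your part (2) is the paper's argument: the Leray spectral sequence plus $H^p(\Sigma_{k-q},\sO(\ell))=0$ for $p,\ell>0$. Your reduction $R^i\beta_{k,*}\sO_{B^k}\cong R^i\beta_{k,*}\sO_{Z_{k-1}}$ for $i\geq 1$ is also correct. But part (1), which carries all the content, is not proved.

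\textbf{Where your route for (1) breaks.}
The duality detour does not close. Knowing $R\beta_{k,*}\omega_{B^k}(Z_{k-1})$ (essentially Theorem \ref{p:41}) does not give $R\beta_{k,*}\sO_{B^k}$. Duality ties the latter to $R\beta_{k,*}\omega_{B^k}=\beta_{k,*}\omega_{B^k}$ (by Grauert--Riemenschneider), so all you obtain is $R^i\beta_{k,*}\sO_{B^k}\cong \mathscr{E}xt^i(\beta_{k,*}\omega_{B^k},\omega_{\Sigma_k})$. Computing these Ext sheaves needs exactly the local information you lack. The claim that a trace map ``should cut the tensor structure down'' to exterior powers is asserted, not argued.

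Your closing strategy also fails as stated, for three reasons:
\begin{enumerate}
\item $H^j(F,\sO_F)$ is only the first term of the inverse system $\varprojlim_\ell H^j(\sO_{B^k}/I_F^\ell)$ that computes the completed stalk. So it determines neither the stalk nor its rank; in top degree $j=\dim F$ you only get a surjection of the fiber onto it.
\item Nothing shows that $R^i\beta_{k,*}\sO_{B^k}$ is annihilated by $I_{\Sigma_{k-i}}$, torsion free, or reflexive on $\Sigma_{k-i}$. Even if it were, a reflexive sheaf of the right rank on a normal Cohen--Macaulay variety need not be free without a map to or from a free sheaf.
\item A check at general points of a stratum says nothing along the deeper strata $\Sigma_m$ with $m<k-i$.
\end{enumerate}

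\textbf{The missing idea: a global comparison map, checked formally at every point.}
You already had the ingredients. The map $\sO_{B^k}\to \alpha_{k,k-1,*}\sO_{B^{k-1}\times C}$ does not need to be an isomorphism to be useful. The paper composes
$R^i\beta_{k,*}\sO_{B^k}\to R^i(\beta_{k-1}\circ\pr_1)_*\sO_{B^{k-1}\times C}\to H^1(C,\sO_C)\otimes R^{i-1}\beta_{k-1,*}\sO_{B^{k-1}}$.
By induction the target is $H^1(C,\sO_C)\otimes\wedge^{i-1}H^1(C,\sO_C)\otimes\sO_{\Sigma_{k-i}}$, and wedging lands in $\wedge^iH^1(C,\sO_C)\otimes\sO_{\Sigma_{k-i}}$.

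At $x\in\Sigma_m\setminus\Sigma_{m-1}$, set $G_x=\beta_{k-i}^{-1}(x)$. The map then becomes $\varprojlim H^i(\sO_{B^k}/I_{F_x}^\ell)\to \wedge^iH^1(C,\sO_C)\otimes\varprojlim H^0(\sO_{B^{k-i}}/I_{G_x}^\ell)$. One compares the graded pieces $H^i(F_x,S^\ell N^*_{F_x})$ and $H^0(G_x,S^\ell N^*_{G_x})$ using $N^*_{F_x}=\sO^{\oplus 2m+1}\oplus E_{k-m,L(-2\xi)}$ and the analogous splitting for $G_x$.

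The needed identity is $H^i(C_{k-m},S^bE_{k-m,L(-2\xi)})\cong \wedge^iH^1(C,\sO_C)\otimes H^0(C_{k-m-i},S^bE_{k-m-i,L(-2\xi)})$. This is part (2) for the smaller secant variety $\Sigma_{k-m-1}(L(-2\xi))$, which is legitimate because $\deg L(-2\xi)\geq 2g+2(k-m-1)+1$. So the induction on $k$ must carry (1) and (2) together, with the line bundle changed to $L(-2\xi)$. Induction on $\ell$ then gives isomorphisms on all thickenings, hence on completions at every point, which your generic-rank argument cannot substitute for.
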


\begin{proof}
	First, we prove that $(1)$ implies $(2)$. It is clear that 
    $$
    H^i(C_{k+1}, S^{\ell} E_{k+1, L}) = H^i(B^k(L), \sO_{B^k}(\ell))~\text{ for $i \geq 0$ and $\ell>0$}.
    $$
    Note that $H^j(\Sigma_{k-i}, \sO_{\Sigma_{k-i}}(\ell))=0$ for $j>0$ and $\ell > 0$. By (1), we have $H^i(B^k(L), \sO_{B^k}(\ell)) = \wedge^i H^1(C, \sO_C) \otimes H^0(\Sigma_{k-i}, \sO_{\Sigma_{k-i}}(\ell))$. 
	
	\medskip
	
	Now, we prove $(1)$. Let $x \in \Sigma_m \setminus \Sigma_{m-1}$ in $\Sigma_k$. There is $\xi_x \in C_{m+1}$ such that $x \in \langle \xi_x \rangle$. Recall that $F_x:=\beta_k^{-1}(x) \cong C_{k-m}$. Then $\Supp (R^i \beta_{k,*} \sO_{B^k(L)})\subseteq \Sigma_{k-i}$. In particular,  $R^i \beta_{k,*} \sO_{B^k(L)} = 0$ for $i \geq k+1$. As $\Sigma_k$ is normal, we get $\beta_{k,*} \sO_{B^k(L)} = \sO_{\Sigma_k}$. We henceforth focus on the cases $1 \leq i \leq k$. We need to assume that $0 \leq m \leq k-i$. We proceed by induction on $k$. The case of $k=1$ is trivial. Assume that $k \geq 2$. Consider the commutative diagram (\ref{eq:16}) in which $\alpha_{k,k-1}$ is a finite by Lemma \ref{lm:01}. As
	$$
	R^i \beta_{k,*} \alpha_{k,k-1,*} \sO_{B^{k-1}(L) \times C} = R^i (\beta_k \circ \alpha_{k,k-1})_* \sO_{B^{k-1}(L) \times C}  = R^i (\beta_{k-1} \circ \pr_1)_* \sO_{B^{k-1}(L) \times C},
	$$
	the map $\sO_{B^k(L)} \rightarrow \alpha_{k,k-1,*} \sO_{B^{k-1}(L) \times C}$ gives rise to a map
	$$
	R^i \beta_{k,*} \sO_{B^k(L)} \longrightarrow R^i (\beta_{k-1} \circ \pr_1)_* \sO_{B^{k-1}(L) \times C}.
	$$
	The Grothendieck spectral sequence then yields a map 
	$$
	R^i (\beta_{k-1} \circ \pr_1)_* \sO_{B^{k-1}(L) \times C} \longrightarrow R^{i-1} \beta_{k-1,*} R^1 \pr_{1,*}  \sO_{B^{k-1}(L) \times C}.
	$$
	But we have
	\begin{align*}
	R^{i-1} \beta_{k-1,*} R^1 \pr_{1,*}  \sO_{B^{k-1}(L) \times C} &= H^1(C, \sO_C) \otimes R^{i-1} \beta_{k-1,*} \sO_{B^{k-1}(L)} \\
    & =  H^1(C, \sO_C) \otimes\wedge^{i-1} H^1(C, \sO_C) \otimes \sO_{\Sigma_{k-i}},
	\end{align*}
	so we obtain a map
	\begin{equation}\label{eq:R^ibetaO_B}
		R^i \beta_{k,*} \sO_{B^k(L)} \longrightarrow \wedge^i H^1(C, \sO_C) \otimes \sO_{\Sigma_{k-i}}
	\end{equation}
	factoring through $H^1(C, \sO_C) \otimes \wedge^{i-1} H^1(C, \sO_C) \otimes \sO_{\Sigma_{k-i}}$.
	By the formal function theorem, this map can be identified with
	$$
	\varprojlim H^i (\sO_{B^k(L)}/I_{F_x}^{\ell}) \longrightarrow \wedge^i H^1(C, \sO_C) \otimes \varprojlim H^0(B^{k-i}(L), \sO_{B^{k-i}(L)}/I_{G_x}^{\ell}),
	$$
	where $G_x:=\beta_{k-i}^{-1}(x) = C_{k-i-m}$. 
	Recall that
	$$
	N_{F_x/B^k(L)}^{*} = \sO_{F_x}^{\oplus 2m+1} \oplus E_{k-m, L(-2\xi_x)}~~\text{ and }~~N_{G_x/B^{k-i}(L)}^{*} = \sO_{G_x}^{\oplus 2m+1} \oplus E_{k-i-m, L(-2\xi_x)}.
	$$
	Consider the commutative diagram\\[-20pt]
	
	\begin{scriptsize}
		$$
		\xymatrixcolsep{0.15in}
		\xymatrix{
			\cdots \ar[r] & H^i(I_{F_x}^{\ell}/I_{F_x}^{\ell+1}) \ar[r] \ar[d] & H^i(\sO_{B^k(L)}/I_{F_x}^{\ell+1}) \ar[r] \ar[d] & H^i(\sO_{B^k(L)}/I_{F_x}^{\ell}) \ar[d] \ar[r] & \cdots \\
			0 \ar[r] &\wedge^i H^1(\sO_C) \otimes H^0(I_{G_x}^{\ell}/I_{G_x}^{\ell+1}) \ar[r] & \wedge^i H^1(\sO_C) \otimes H^0(\sO_{B^{k-i}(L)}/I_{G_x}^{\ell+1}) \ar[r] & \wedge^i H^1(\sO_C) \otimes H^0(\sO_{B^{k-i}(L) }/I_{G_x}^{\ell}) \ar[r] & 0.
		}
		$$
	\end{scriptsize}
	
	\noindent Note that $I_{F_x}^{\ell}/I_{F_x}^{\ell+1} = S^{\ell} N_{F_x/B^k(L)}^{*}$ and $I_{G_x}^{\ell}/I_{G_x}^{\ell+1} = S^{\ell} N^*_{G_x/B^{k-i}(L)}$. By induction,
	\begin{align*}
		H^i(C_{k-m}, S^{\ell} E_{k-m, L(-2\xi_x)}) &= \wedge^i H^1(C, \sO_C) \otimes H^0(\Sigma_{k-m-i}, \sO_{\Sigma_{k-i}}(\ell)) \\
		& = \wedge^i H^1(C, \sO_C) \otimes H^0(C_{k-i-m}, S^{\ell} E_{k-i-m, L(-2\xi_x)}).
	\end{align*}
	Then
	$$
	H^i(I_{F_x}^{\ell}/I_{F_x}^{\ell+1}) = \wedge^i H^1(C, \sO_C) \otimes H^0(I_{G_x}^{\ell}/I_{G_x}^{\ell+1}).
	$$
	This implies by induction on $\ell$ that the map
	$$
	H^i(\sO_{B^k(L)}/I_{F_x}^{\ell+1}) \longrightarrow \wedge^i H^1(C, \sO_C) \otimes H^0(\sO_{B^{k-i}(L) }/I_{G_x}^{\ell+1})
	$$
	is an isomorphism. Hence the map (\ref{eq:R^ibetaO_B}) is an isomorphism.
\end{proof}

While $H^{k+1}(C_{k+1}, S^{\ell} E_{k+1, L}) =0$ for $\ell > 0$, it is possible that $H^{k+1}(C_{k+1}, \sO_{C_{k+1}}) \neq 0$. Indeed,  $H^i(C_{k+1}, \sO_{C_{k+1}})=\wedge^i H^1(C, \sO_C)$ for $0 \leq i \leq k+1$ by \cite[Lemma 3.7]{ENP}, and thus, $H^{k+1}(C_{k+1}, \sO_{C_{k+1}}) = \wedge^{k+1} H^1(C, \sO_C)$. 

\medskip

For a complete answer to \cite[Problem 6.2]{ENP}, we need to know the exact value of the Hilbert function
$$
H_{\Sigma_{k-i}}(\ell):= h^0(\Sigma_{k-i}, \sO_{\Sigma_{k-i}}(\ell))~~\text{ for $\ell > 0$}.
$$
As $H^j(\Sigma_{k-i}, \sO_{\Sigma_{k-i}}(\ell))=0$ for $j>0$ and $\ell > 0$ (see Theorem \ref{p:01}),  the Hilbert function $H_{\Sigma_{k-i}}(\ell)$ coincides with the Hilbert polynomial $\chi(\sO_{\Sigma_{k-i}}(\ell))$ for every $\ell > 0$.

\begin{lemma}\label{lem:longexactsequence}
For an integer $-k \leq \ell \leq -1$, there is an exact sequence
$$
\begin{array}{l}
	0 \longrightarrow \wedge^k H^1(C, \sO_C) \otimes H^1(\Sigma_0, \sO_{\Sigma_0}(\ell)) \longrightarrow \cdots \longrightarrow \wedge^2 H^1(C, \sO_C) \otimes H^{2k-3}(\Sigma_{k-2}, \sO_{\Sigma_{k-2}}(\ell))\\
	\longrightarrow  H^1(C, \sO_C) \otimes H^{2k-1}(\Sigma_{k-1}, \sO_{\Sigma_{k-1}}(\ell)) \longrightarrow H^{2k+1}(\Sigma_{k}, \sO_{\Sigma_{k}}(\ell)) \longrightarrow 0.
\end{array}
$$
\end{lemma}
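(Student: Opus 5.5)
The exact sequence should come out of the Leray spectral sequence for $\beta_k \colon B^k(L) \to \Sigma_k$ applied to $\sO_{B^k(L)}(\ell) = \beta_k^*\sO_{\Sigma_k}(\ell)$. By the projection formula and Theorem \ref{thm:cohomologysecantsheaf}(1),
$$
E_2^{p,q} = H^p(\Sigma_k, R^q\beta_{k,*}\sO_{B^k(L)} \otimes \sO_{\Sigma_k}(\ell)) = \wedge^q H^1(C,\sO_C) \otimes H^p(\Sigma_{k-q}, \sO_{\Sigma_{k-q}}(\ell)) \Longrightarrow H^{p+q}(B^k(L), \sO_{B^k(L)}(\ell)).
$$
Here $0 \le q \le k$, and $E_2^{p,q}=0$ for $q \geq k+1$.

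First, the abutment vanishes. Since $B^k(L) = \nP(E_{k+1,L})$ is a $\nP^k$-bundle over $C_{k+1}$, we have $R\pi_{k,*}\sO_{B^k(L)}(\ell) = 0$ for $-k \le \ell \le -1$. Hence $H^j(B^k(L), \sO_{B^k(L)}(\ell)) = 0$ for all $j$.

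Second, most of the $E_2$ page is zero.
\begin{itemize}
\item For $p=0$: each $\Sigma_{k-q}$ is a positive-dimensional projective variety (normal, by Theorem \ref{p:01}), so $H^0(\Sigma_{k-q},\sO(\ell))=0$ for $\ell<0$.
\item For $1 \le p \le 2(k-q) = \dim \Sigma_{k-q} - 1$: $H^p(\Sigma_{k-q},\sO(\ell))=0$ by Corollary \ref{cor:Sigma=aCM} applied to $\Sigma_{k-q}$. Note $\deg L \ge 2g+2(k-q)+1$, so the corollary applies.
\item For $p > 2(k-q)+1$: the group vanishes for dimension reasons.
\end{itemize}
So the only possibly nonzero terms are $E_2^{2(k-q)+1,\,q} = \wedge^q H^1(C,\sO_C)\otimes H^{2(k-q)+1}(\Sigma_{k-q},\sO_{\Sigma_{k-q}}(\ell))$ for $0\le q\le k$.

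Third, only one differential can act. A differential $d_r \colon E_r^{p,q} \to E_r^{p+r,q-r+1}$ between two surviving terms needs $p = 2(k-q)+1$ and $p + r = 2(k-q+r-1)+1$, which forces $r = 2$. So the only possibly nonzero differentials are
$$
d_2 \colon \wedge^q H^1(C,\sO_C)\otimes H^{2(k-q)+1}(\Sigma_{k-q},\sO(\ell)) \longrightarrow \wedge^{q-1} H^1(C,\sO_C)\otimes H^{2(k-q)+3}(\Sigma_{k-q+1},\sO(\ell)).
$$
These form a single complex as $q$ runs from $k$ down to $0$. It starts at $\wedge^k H^1(C,\sO_C)\otimes H^1(\Sigma_0,\sO_{\Sigma_0}(\ell))$ and ends at $H^{2k+1}(\Sigma_k,\sO_{\Sigma_k}(\ell))$, and it is flanked by zeros because the neighbouring $E_2$ terms vanish. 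Since $E_3 = E_\infty$ and the abutment is zero, all $E_3$ terms vanish. This is exactly exactness of the complex at every spot, including injectivity at the left end and surjectivity at the right end, which gives the lemma.

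The main point needing care is the second step. It relies on the aCM vanishing (Corollary \ref{cor:Sigma=aCM}) holding for every lower secant variety $\Sigma_{k-q}(L)$ in the full range $1\le p\le 2(k-q)$ and all negative twists, which holds because $\deg L$ satisfies the degree bound for each smaller index. The remaining concern is bookkeeping: confirming that no $d_r$ with $r\ge3$ connects surviving terms, which the degree count in the third step settles.
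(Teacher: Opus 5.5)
Your proposal is correct and follows essentially the same route as the paper. Both use the Leray spectral sequence for $\beta_k$, with $E_2$ terms computed from Theorem \ref{thm:cohomologysecantsheaf}(1), the vanishing of $H^\ast(B^k(L),\sO_{B^k(L)}(\ell))$ for $-k\le\ell\le-1$, and the aCM vanishing of Corollary \ref{cor:Sigma=aCM} for the lower secant varieties. Your degree count showing that only $d_2$ can be nonzero, so that $E_3=E_\infty=0$, makes explicit a step the paper leaves implicit.
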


\begin{proof}
Since $-k \leq \ell \leq -1$, it follows that $H^i(B^k(L), \sO_{B^k(L)}(\ell))=0$ for $0 \leq i \leq 2k+1$. By Corollary \ref{cor:Sigma=aCM}, for each $0 \leq m \leq k$, we have $H^i(\Sigma_{k-m}, \sO_{\Sigma_{k-m}}(\ell))=0$ for $0 \leq i \leq 2k-2m$. Then the Leray spectral sequence
$$
E_2^{p,q} = H^p(\Sigma_k, R^q \beta_{k,*} \sO_{B^k(L)}(\ell)) \Longrightarrow H^{p+q}(B^k(L), \sO_{B^k(L)}(\ell))=0
$$
yields that the complex from $E_2$-page, which is the sequence in the statement, is exact.
\end{proof}

We show a recursive formula for the Hilbert polynomial $\chi(\sO_{\Sigma_k}(m))$ of the secant variety $\Sigma_k$. 

\begin{theorem}\label{thm:hilbertpolynomial}
Assume that $d:=\deg L \geq 2g+2k+1$, and let $\Sigma_k:=\Sigma_k(L)$ be the $k$-th secant variety of the curve $C$ in the space $\nP H^0(C, L)$. For any $m \in \mathbb{Z}$, one has
	$$
	\chi(\sO_{\Sigma_k}(m))=\sum_{\ell=-k}^{k+1} (-1)^{k+1-\ell} a_{\ell}{2k+2 \choose k+\ell} {m+k \choose 2k+2}\frac{k+2-\ell}{m-\ell} ,
	$$
where
	$$
	a_{\ell}:=\begin{cases}
		\sum_{i=1}^k (-1)^i {g \choose i} \chi(\sO_{\Sigma_{k-i}}(\ell)) & \text{for $-k \leq \ell \leq -1$}\\
		1 - { g+k \choose k+1} & \text{for $\ell=0$}\\
		{ d-g+\ell \choose \ell} & \text{for $1 \leq \ell \leq k+1$}.
	\end{cases}
	$$
\end{theorem}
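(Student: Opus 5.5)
The idea is to use Lagrange interpolation. By Theorem \ref{p:01} and Corollary \ref{cor:Sigma=aCM}, $\chi(\sO_{\Sigma_k}(m))$ is a polynomial in $m$ of degree $\dim\Sigma_k=2k+1$. It is therefore determined by its values at the $2k+2$ integers $\ell=-k,\dots,k+1$. The Lagrange basis polynomial at the node $\ell$ is $\prod_{j\neq \ell}(m-j)/(\ell-j)$. Its numerator is
$$\prod_{j=-k}^{k+1}(m-j)\big/(m-\ell)=(2k+2)!{m+k\choose 2k+2}\big/(m-\ell),$$
and its denominator is $(-1)^{k+1-\ell}(k+\ell)!\,(k+1-\ell)!$. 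Writing $(2k+2)!/((k+\ell)!(k+1-\ell)!)=(k+2-\ell){2k+2\choose k+\ell}$ gives exactly the coefficient in the statement. So the whole proof reduces to showing $\chi(\sO_{\Sigma_k}(\ell))=a_\ell$ at the nodes, which I would treat in three cases.

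\emph{Case $1\le \ell\le k+1$.} Higher cohomology vanishes, so $\chi=h^0(\Sigma_k,\sO_{\Sigma_k}(\ell))$. By projective normality this equals $\dim S^\ell H^0(L)-\dim (I_{\Sigma_k})_\ell$, and $\dim S^\ell H^0(L)={d-g+\ell\choose \ell}$ since $r=d-g$. It remains to show that no nonzero form $F$ of degree $\ell\le k+1$ vanishes on $\Sigma_k$. Such an $F$ vanishes on every $\ell$-secant $(\ell-1)$-plane, because $\ell-1\le k$. Hence for points $x_1,\dots,x_\ell\in C$ the polynomial $F(t_1x_1+\cdots+t_\ell x_\ell)$ vanishes identically in the $t_i$. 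Its $t_1\cdots t_\ell$ coefficient is (a multiple of) the full polarization of $F$, so the polarization vanishes on $C^\ell$. Since $C$ spans $\nP^r$ and we are in characteristic zero, the multilinear form is zero, and therefore $F=0$.

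\emph{Case $\ell=0$.} By Corollary \ref{cor:Sigma=aCM}, $\chi(\sO_{\Sigma_k})=1-h^{2k+1}(\Sigma_k,\sO_{\Sigma_k})=1-h^0(\Sigma_k,\omega_{\Sigma_k})$. Taking cohomology sheaves of the triangle (\ref{eq:tri}) gives $\omega_{\Sigma_k}=\beta_{k,*}\omega_{B^k}(Z_{k-1})$, exactly as in the proof of Lemma \ref{lem:CM=>aCM}. Then $h^0(\Sigma_k,\omega_{\Sigma_k})=h^0(C_{k+1},T_{k+1}(\omega_C))=\dim S^{k+1}H^0(\omega_C)={g+k\choose k+1}$ by Lemma \ref{lm:03}.

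\emph{Case $-k\le\ell\le-1$.} This is the main step. By Corollary \ref{cor:Sigma=aCM} and the vanishing of $H^0$ in negative degrees, each $\Sigma_{k-i}$ with $0\le i\le k$ has only top cohomology in degree $\ell$. Hence $\chi(\sO_{\Sigma_{k-i}}(\ell))=-h^{2(k-i)+1}(\Sigma_{k-i},\sO_{\Sigma_{k-i}}(\ell))$. Taking the alternating sum of dimensions in the exact sequence of Lemma \ref{lem:longexactsequence}, with $\dim\wedge^iH^1(\sO_C)={g\choose i}$, expresses $h^{2k+1}(\Sigma_k,\sO_{\Sigma_k}(\ell))$ as $\sum_{i=1}^k\pm{g\choose i}h^{2k-2i+1}(\Sigma_{k-i},\sO_{\Sigma_{k-i}}(\ell))$. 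Converting each $h^{\rm top}$ back to $-\chi$ then yields $a_\ell$. The main obstacle I expect is bookkeeping: the signs in this alternating sum, which I would check on $k=1$ against the known Hilbert polynomial of the secant variety of a curve. I would also verify that the Leray argument in Lemma \ref{lem:longexactsequence} really places the terms $\wedge^iH^1(\sO_C)\otimes H^{2k-2i+1}(\Sigma_{k-i},\sO_{\Sigma_{k-i}}(\ell))$ in consecutive positions. With the three cases in hand, the interpolation formula completes the proof.
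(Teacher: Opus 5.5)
Your interpolation set-up is correct: the Lagrange coefficient is exactly the one in the statement. The cases $\ell=0$ and $1\le\ell\le k+1$ are also fine, and they follow the paper's outline. Where the paper cites \cite[Theorem 1.2]{ENP} and Danila's theorem, you give self-contained arguments instead. One is $h^{2k+1}(\Sigma_k,\sO_{\Sigma_k})=h^0(\omega_{\Sigma_k})=h^0(C_{k+1},T_{k+1}(\omega_C))=\binom{g+k}{k+1}$ via (\ref{eq:tri}) and Lemma \ref{lm:03}. The other is the polarization argument showing no form of degree $\le k+1$ vanishes on $\Sigma_k$, which with projective normality gives $\binom{d-g+\ell}{\ell}$. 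Both are correct.

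The real problem is the sign check you postponed: it does not come out as you assert. For $-k\le\ell\le-1$ put $h_j:=h^{2j+1}(\Sigma_j,\sO_{\Sigma_j}(\ell))$. The sequence of Lemma \ref{lem:longexactsequence} is fine: only $d_2$ can be nonzero, since the nonzero $E_2^{p,q}$ sit at $p=2(k-q)+1$. Its exactness gives $h_k=\sum_{i=1}^k(-1)^{i+1}\binom gi h_{k-i}$. Since $\chi(\sO_{\Sigma_j}(\ell))=-h_j$ for every $j$, this yields
\[
\chi(\sO_{\Sigma_k}(\ell))=\sum_{i=1}^k(-1)^{i+1}\binom gi\chi(\sO_{\Sigma_{k-i}}(\ell))=-a_\ell ,
\]
not $a_\ell$. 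So for $g\ge1$ the formula as stated is false, and your last step cannot be repaired.

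Your proposed $k=1$ test detects this. One has $\chi(\sO_{\Sigma_1}(-1))=-g\,h^1(C,L^{-1})=-g(d+g-1)$, whereas $a_{-1}=g(d+g-1)$. Take $g=1$, $d=5$. The stated values $a_{-1},a_0,a_1,a_2=5,0,5,15$ interpolate to a cubic with leading coefficient $-5/6$, which is impossible for a threefold. The correct value $\chi(\sO_{\Sigma_1}(-1))=-5$ gives leading coefficient $5/6$, matching the quintic secant hypersurface of an elliptic normal quintic in $\nP^4$.

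The paper's own proof contains the same slip. Its equality $-h^{2k+1}(\Sigma_k,\sO_{\Sigma_k}(\ell))=\sum_{i}(-1)^i\binom gi\chi(\sO_{\Sigma_{k-i}}(\ell))$ holds only with $h^{2k-2i+1}(\Sigma_{k-i},\sO_{\Sigma_{k-i}}(\ell))$ in place of $\chi$. Its $k=1$ example inherits the wrong sign of $a_{-1}$. What your argument actually proves is the theorem with $a_\ell:=\sum_{i=1}^k(-1)^{i+1}\binom gi\chi(\sO_{\Sigma_{k-i}}(\ell))$ for $-k\le\ell\le-1$. With that correction the $k=1$ case gives $\deg\Sigma_1=\binom{d-1}{2}-g$, as it must.
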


\begin{proof}
Recall that $H^i(\Sigma_k, \sO_{\Sigma_k}(\ell))=0$ for $1 \leq i \leq 2k$ and $\ell \in \nZ$ and $\chi(\sO_{\Sigma_k}(m))$ is a polynomial of degree $2k+1$ in $m$. To compute $\chi(\sO_{\Sigma_k}(m))$, it suffices to calculate $\chi(\sO_{\Sigma_k}(\ell))$ for $2k+2$ many different values of $\ell$. We treat the cases where $-k \leq \ell \leq k+1$. Recall from \cite[Theorem 1.2]{ENP} that $h^{2k+1}(\Sigma_k, \sO_{\Sigma_k}) = h^0(\Sigma_k, \omega_{\Sigma_k}) = {g+k \choose k+1}$.
Then we have
$$
\chi(\sO_{\Sigma_k})=h^0(\Sigma_k, \sO_{\Sigma_k})-h^{2k+1}(\Sigma_k, \sO_{\Sigma_k}) = 1 - { g+k \choose k+1} = a_0.
$$
For $ 1 \leq \ell \leq k+1$, Danila's theorem \cite{Danila} (see also \cite{ENP2}) gives
$$
\chi(\sO_{\Sigma_k}(\ell))=h^0(\Sigma_k, \sO_{\Sigma_k}(\ell)) = { d-g+\ell \choose \ell} = a_{\ell}.
$$
For $-k \leq \ell \leq -1$, Lemma \ref{lem:longexactsequence} gives
$$
\chi(\sO_{\Sigma_k}(\ell)) = -h^{2k+1}(\Sigma_k, \sO_{\Sigma_k}(\ell))=\sum_{i=1}^k (-1)^i {g \choose i} \chi(\sO_{\Sigma_{k-i}}(\ell)) = a_{\ell}.
$$
Then the assertion follows from the Lagrange interpolation formula.
\end{proof}

\begin{example}
We explicitly compute the Hilbert polynomial of $\Sigma_k$ for $k=0$ and $k=1$. 
First, consider the case of $k=0$. We have $a_0 = 1-g$ and $a_1=d-g+1$ Then
$$
\chi(\sO_C(m)) = -a_0 {2 \choose 0} {m \choose 2} \frac{2}{m} + a_1 {2 \choose 1}{ m \choose 2} \frac{1}{m-1} = dm-g+1.
$$
This coincides with Riemann--Roch formula. 
Next, consider the case of $k=1$. We have  
$$
a_{-1} = gd+g^2-g, ~a_0 = -\frac{g^2+g-2}{2},~a_1 = d-g+1,~a_2 = \frac{(d-g+2)(d-g+1)}{2}.
$$
Then
\begin{align*}
\chi(\sO_{\Sigma_1}(m)) = -(gd+g^2-g) \frac{m(m-1)(m-2)}{6} - (g^2+g-2)\frac{(m+1)(m-1)(m-2)}{4} \\ - (d-g+1)\frac{(m+1)m(m-2)}{2} + (d-g+2)(d-g+1) \frac{(m+1)m(m-1)}{12}.
\end{align*}
This was first computed by Sidman--Vermeire \cite[Theorem 1.1]{SV}.
\end{example}

\begin{remark}
From Theorem \ref{thm:hilbertpolynomial}, one can deduce that every coefficient of the Hilbert polynomial $\chi(\sO_{\Sigma_k}(m))$ is a polynomial in $m$ and $d$ once we fix $g$ and $k$. On the other hand, if $d=\deg L \geq 2g+2k+2$, then the defining ideal of $\Sigma_k$ in $\nP^r$ is generated by forms of degree $k+2$ (see \cite[Theorem 1.2]{ENP}). The number of minimal generators of the defining ideal of $\Sigma_k$ in $\nP^r$ is
\begin{align*}
h^0(\nP^r, I_{\Sigma_k/\nP^r}(k+2)) &= h^0(\nP^r, \sO_{\nP^r}(k+2)) - h^0(\Sigma_k, \sO_{\Sigma_k}(k+2))\\
&= {d-g+k+2 \choose k+2} - \chi(\sO_{\Sigma_k}(k+2)),
\end{align*}
which turns out to be a polynomial in $d$ once we fix $g$ and $k$. 
\end{remark}

One may also want to compute the cohomology groups of exterior powers of secant sheaves on $C_{k+1}$. For this purpose, we consider the addition map
$$
\sigma=\sigma_{k+1-\ell, \ell} \colon C_{k+1-\ell} \times C_{\ell} \longrightarrow C_{k+1}
$$
sending $(\xi, \eta)$ to $\xi+\eta$ for $1 \leq \ell \leq k+1$. Note that $\sigma$ is a finite and flat morhpism of degree ${k+1 \choose \ell}$. The pullback of the line bundle $T_{k+1}(L)$ by the map $\sigma$ is 
$$\sigma^*T_{k+1}(L)=T_{k+1-\ell}(L)\boxtimes T_{\ell}(L),$$
which can be verified by pulling back to the Cartesian product $C^{k+1}$ (see \cite[Remark 3.3(5)]{ENP}). We prove the following lemma, whose special case when $C=\nP^1$ was observed by  Raicu--Sam \cite[Theorem 3.1]{RS}.

\begin{lemma}\label{lem:sigma_*N}
For any line bundle $L$ on $C$, we have
$$
\wedge^{\ell} E_{k+1, L} = \sigma_* (\sO_{C_{k+1-\ell}} \boxtimes N_{\ell, L})~~\text{ for $1 \leq \ell \leq k+1$}.
$$
\end{lemma}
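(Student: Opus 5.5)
We want $\wedge^\ell E_{k+1,L}\cong \sigma_*(\sO_{C_{k+1-\ell}}\boxtimes N_{\ell,L})$. The plan is to construct a natural map between these two sheaves, check that both are locally free of the same rank, and then show the map is an isomorphism away from a codimension-two locus (or on fibers).

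\emph{Ranks.} Since $\sigma$ is finite flat of degree $\binom{k+1}{\ell}$ and $N_{\ell,L}$ is a line bundle, the right side is locally free of rank $\binom{k+1}{\ell}$. This equals the rank of $\wedge^\ell E_{k+1,L}$.

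\emph{Constructing the map.} By adjunction it suffices to give a map $\sigma^*\wedge^\ell E_{k+1,L}\to \sO\boxtimes N_{\ell,L}$ on $C_{k+1-\ell}\times C_\ell$. At a point $(\xi,\eta)$ the fiber of $E_{k+1,L}$ is $H^0(L|_{\xi+\eta})$. The natural restriction $H^0(L|_{\xi+\eta})\to H^0(L|_\eta)$ globalizes to a surjection $\sigma^*E_{k+1,L}\to \pr_2^*E_{\ell,L}$. To see this, let $D\subseteq C_{k+1}\times C$ be the universal divisor. Its pullback to $C_{k+1-\ell}\times C_\ell\times C$ contains the pulled-back universal divisor $D_\ell$ of $C_\ell$, and $E_{k+1,L}$ is the pushforward of $L$ restricted to $D$, so restriction to $D_\ell$ gives the surjection. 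Taking $\ell$-th exterior powers gives
$$\sigma^*\wedge^\ell E_{k+1,L}\to \pr_2^*\det E_{\ell,L}=\sO_{C_{k+1-\ell}}\boxtimes N_{\ell,L},$$
and its adjoint is a map $\Phi\colon \wedge^\ell E_{k+1,L}\to\sigma_*(\sO\boxtimes N_{\ell,L})$.

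\emph{Isomorphism.} Both sheaves are locally free of the same rank on the smooth variety $C_{k+1}$. It therefore suffices to show that $\det\Phi$ is nowhere vanishing, and this can be checked off codimension two. The open set $U$ of reduced divisors has complement the diagonal divisor $\delta_{k+1}$, so one has to be more careful than just using $U$.

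On $U$, at $\zeta=p_1+\dots+p_{k+1}$ we have $E\otimes\Bbbk(\zeta)=\oplus_j L_{p_j}$, so $\wedge^\ell$ of the fiber is $\oplus_{|J|=\ell}\otimes_{j\in J}L_{p_j}$. The fiber of $\sigma_*$ is the direct sum over the $\binom{k+1}{\ell}$ preimages $(\zeta-\eta_J,\eta_J)$ of $N_{\ell,L}(\eta_J)=\otimes_{j\in J}L_{p_j}$, and $\Phi$ matches these summands. Hence $\Phi$ is an isomorphism on $U$.

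\emph{Main obstacle: the diagonal.} I expect the main difficulty to be the generic point of $\delta_{k+1}$, where $\sigma$ is ramified and $\det\Phi$ could vanish to some order. I would try two routes.

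First, compare determinants. One has $c_1(\wedge^\ell E)=\binom{k}{\ell-1}c_1(E)$, and $c_1(\sigma_*(\cdots))$ can be computed by Grothendieck--Riemann--Roch or by pulling back to $C^{k+1}$, using $\sigma^*T_{k+1}(L)=T_{k+1-\ell}(L)\boxtimes T_\ell(L)$, $N=T(-\delta)$, and the ramification of $\sigma$. If the two determinants agree, then $\det\Phi$ is a section of a trivial line bundle that is nonzero on $U$, hence nowhere zero.

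Alternatively, work locally near a general point of $\delta_{k+1}$, where only two points collide. This reduces everything to the case $k+1=2$ times an étale factor, and for $C_2$ the map is checked by hand in a local coordinate $t$: with $E$ having the basis given by $1,t$ modulo $t^2-st+p$, one computes $\det\Phi$ directly and sees it is a unit.

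I would present the local computation, since it avoids the determinant bookkeeping.
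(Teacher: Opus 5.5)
Your proposal is correct, but it deals with the ramification of $\sigma$ by a different route than the paper.

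\textbf{Comparison with the paper.} The map is the same: your universal-divisor restriction $\sigma^*E_{k+1,L}\to \sO_{C_{k+1-\ell}}\boxtimes E_{\ell,L}$ is the one the paper imports from Agostini. Both arguments check that the map is an isomorphism over reduced divisors, but the paper extracts only injectivity from this. It gets surjectivity globally:
\begin{itemize}
\item Since $L$ is trivial on an affine open $U\subseteq C$ containing any given $\xi$, $\varphi_L$ agrees with $\varphi_{\sO_C}$ over $U_{k+1}$, so one may assume $L$ very positive.
\item After twisting by $T_{k+1}(M)$ with $M$ very positive, it gets surjectivity on global sections. This uses the embedding $C_{k+1-\ell}\times C_\ell\hookrightarrow C_{k+1}\times C_\ell$, $(\xi,\eta)\mapsto(\xi+\eta,\eta)$, and the vanishing of $H^1$ of its twisted ideal sheaf.
\end{itemize}
You instead use equality of ranks on the smooth variety $C_{k+1}$ and reduce to a codimension-one check at the generic point of the non-reduced locus. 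Your route needs no positivity or cohomology vanishing, and it shows directly why the ramification of $\sigma$ does not make $\det\Phi$ vanish. The paper's route avoids any analysis along that locus.

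\textbf{Points to tighten.}
\begin{itemize}
\item The complement of $U$ is the reduced diagonal $\Delta_{k+1}$. It is irreducible (the image of $C\times C_{k-1}$) and has class $2\delta_{k+1}$, not $\delta_{k+1}$. Irreducibility is what makes a single general point suffice.
\item Your local route lands in a tautological case. Near $2p+q_1+\cdots+q_{k-1}$, $E_{k+1,L}$ splits \'etale-locally as $E_{2,L}\oplus L_{q_1}\oplus\cdots\oplus L_{q_{k-1}}$. Then $\Phi$ is block diagonal, with blocks indexed by how many copies of $p$ lie in $\eta$.
  \begin{itemize}
  \item The blocks with $0$ or $2$ copies lie over an \'etale part of $\sigma$, so they are isomorphisms.
  \item The block with one copy is $\sigma_{1,1}\times\id$. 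There $\Phi$ is the identity of $E_{2,L}=\sigma_{1,1,*}(\sO_C\boxtimes L)$, twisted by the $L_{q_i}$.
  \end{itemize}
  So no coordinate computation is needed, but you must write out the block-diagonality; that is the real content of this route.
\item Your determinant route also works, and numerical equality of $c_1$ suffices. A nonzero section of a numerically trivial line bundle has a numerically trivial effective zero divisor, which must be empty. Write $L_i$ for the pullback of $L$ from the $i$-th factor of $C^{k+1}$ and $\Delta_{ij}$ for the diagonals. Pulled back to $C^{k+1}$, both sides equal $\binom{k}{\ell-1}\bigl(\sum_i L_i-\sum_{i<j}\Delta_{ij}\bigr)$. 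On the $\sigma_*$ side the diagonal coefficient is $\binom{k-1}{\ell-2}$ from $\delta_\ell$ plus $\binom{k-1}{\ell-1}$ from $c_1(\sigma_*\sO)=-\tfrac12\sigma_*R$.
\end{itemize}
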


\begin{proof}
By \cite[Lemma 3.1]{Agostini:SecnatConj}, we have a map
$\sigma^* E_{k+1,L} \to \sO_{C_{k+1-\ell}} \boxtimes E_{\ell, L}$. Taking $\ell$-th exterior power, we get a map $\sigma^* \wedge^{\ell} E_{k+1,L} \to \sO_{C_{k+1-\ell}} \boxtimes N_{\ell, L}$. Pushing forward this map by $\sigma$ and composing with the splitting injective map $\wedge^{\ell} E_{k+1,L} \to \sigma_* \sigma^* \wedge^{\ell} E_{k+1,L}$, we obtain a map
$$
\varphi_L \colon \wedge^{\ell} E_{k+1, L} \longrightarrow \sigma_* (\sO_{C_{k+1-\ell}} \boxtimes N_{\ell, L}).
$$
If $\xi \in C_{k+1}$ consists of $k+1$ distinct points on $C$, then one can easily check that $\varphi_L \otimes \Bbbk(\xi)$ is an isomorphism because 
$$
\wedge^{\ell} E_{k+1, L}\otimes \Bbbk(\xi)=\wedge^\ell H^0(L|_{\xi})=\bigoplus_{\eta\subseteq \xi, \deg \eta = \ell }N_{\ell,L}\otimes \Bbbk(\eta).
$$ 
Thus $\varphi_L$ is generically injective, so in fact, $\varphi_L$ is injective. We need to confirm that $\varphi_L$ is surjective. For any $\xi \in C_{k+1}$, there is an affine open subset $U \subseteq C$ such that $\xi \subseteq U$ and $L|_U = \sO_U$. Then the $(k+1)$-th symmetric product $U_{k+1}$ of $U$ is an open subset of $C_{k+1}$, and $\varphi_L|_{U_{k+1}} = \varphi_{\sO_C}|_{U_{k+1}}$. Therefore, if we prove that $\varphi_L$ is surjective for one line bundle $L$, then the same holds for all line bundles $L$. We henceforth assume that $L$ is sufficiently positive. It suffices to show that the map 
\begin{equation}\label{eq:glmapwedgeEtoN}
H^0(C_{k+1}, \wedge^{\ell} E_{k+1,L} \otimes T_{k+1}(M)) \longrightarrow H^0(C_{k+1-\ell} \times C_{\ell}, T_{k+1-\ell}(M) \boxtimes N_{\ell, L \otimes M})
\end{equation}
is surjective for a sufficiently positive line bundle $M$ on $C$.

To this end, consider $C_{k+1-\ell} \times C_{\ell}$ embedded as a subvariety into $C_{k+1} \times C_{\ell}$ by sending $(\xi, \eta) \mapsto (\xi+\eta, \eta)$.
Notice that $H^0(L)=H^0(C_{k+1},E_{k+1,L})$. So for an integer $\ell\geq 0$, there is a natural evaluation map $\wedge^\ell H^0(L)\otimes \sO_{C_{k+1}}\rightarrow \wedge^\ell E_{k+1,L}$ of sheaves. Working on $C_{k+1}\times C_{\ell}$, we observe that 
$$(\wedge^\ell E_{k+1,L}\otimes T_{k+1}(M))\boxtimes \sO_{C_\ell}|_{C_{k+1-\ell}\times C_{\ell}}=\sigma^*(\wedge^\ell E_{k+1,L} \otimes T_{k+1}(M)), \text{ and}$$
$$T_{k+1,M} \boxtimes N_{\ell, L}|_{C_{k+1-\ell}\times C_{\ell}}=T_{k+1-\ell}(M) \boxtimes N_{\ell, L \otimes M}.$$
Thus we can form the following commutative diagram 
$$
\xymatrix{
\wedge^{\ell} H^0(L) \otimes (T_{k+1}(M) \boxtimes \sO_{C_{\ell}}) \ar[r] \ar[d]  & (\wedge^\ell E_{k+1,L} \otimes T_{k+1}(M)) \boxtimes \sO_{C_{\ell}} \ar[r] \ar[rd] & \sigma^* (\wedge^{\ell}E_{k+1,L} \otimes T_{k+1}(M)) \ar[d] \\
T_{k+1,M} \boxtimes N_{\ell, L} \ar[rr] & & T_{k+1-\ell}(M) \boxtimes N_{\ell, L \otimes M},
}
$$
where the left upper horizontal map and the left vertical map are the evaluation maps and the right upper horizontal map and the lower horizontal maps are the restriction maps to $C_{k+1-\ell} \times C_{\ell}$ . 
As the map (\ref{eq:glmapwedgeEtoN}) is the map on the global sections of the diagonal map, it is enough to check that the map on the global sections of the lower horizontal map is surjective. 
This can be deduced from 
$$
H^1(C_{k+1} \times C_{\ell}, I_{C_{k+1-\ell} \times C_{\ell}/C_{k+1} \times C_{\ell}} \otimes (T_{k+1}(M) \boxtimes N_{\ell, L}) = 0, 
$$
which holds true since both $M$ and $L$ are sufficiently positive.
\end{proof}

\begin{remark}
Assuming that $\varphi_L$ is an isomorphism for a sufficiently positive line bundle $L$, one can alternatively prove that $\varphi_L$ is an isomorphism for every line bundle $L$ as follows.
For a point $p \in C$, consider the commutative diagram
$$
\xymatrix{
0 \ar[r] & \wedge^{\ell} E_{k+1,L} \ar[d]^-{\varphi_L}\ar[r] & \wedge^{\ell} E_{k+1,L(p)} \ar[d]^-{\varphi_{L(p)}}\ar[r] & \wedge^{\ell-1} E_{k, L} \ar[d] \ar[r] & 0\\
0 \ar[r] & \sigma_*(\sO_{C_{k+1-\ell}} \boxtimes N_{\ell, L}) \ar[r] & \sigma_*(\sO_{C_{k+1-\ell}} \boxtimes N_{\ell, L(p)}) \ar[r] & \sigma_*(\sO_{C_{k+1-\ell}} \boxtimes N_{\ell-1, L}) \ar[r] & 0,
}
$$
where the two rows are short exact sequences and the two right-most vector bundles are supported on $X_p:=\{ \xi \in C_{k+1} \mid p \in \xi\} \cong C_k$. By induction on $k$, we may assume that the right-most vertical map is an isomorphism. Then $\varphi_L$ is an isomorphism if and only if $\varphi_{L(p)}$ is an isomorphism. Thus we can increase $\deg L$ as much as we want, and then, the proof is complete by the initial assumption. 
\end{remark}

\begin{theorem}\label{thm:cohwedgesecsheaf}
For line bundles $L$ and $M$ on $C$, we have
\begin{align*}
&H^i(C_{k+1}, \wedge^{\ell} E_{k+1,L} \otimes T_{k+1}(M)) \\
&= \bigoplus_{p+q=i}  \big( S^{k+1-\ell-p}H^0(C, M) \otimes S^q H^1(C, L \otimes M) \otimes \wedge^p H^1(C,M) \otimes \wedge^{\ell-q} H^0(C, L \otimes M) \big)
\end{align*}
for $0 \leq i \leq k+1$ and $1 \leq \ell \leq k+1$. In particular,
$$
H^i(C_{k+1}, \wedge^{\ell} E_{k+1,L}) = \bigoplus_{p+q=i}  \big(  S^q H^1(C, L ) \otimes \wedge^p H^1(C,\sO_C) \otimes \wedge^{\ell-q} H^0(C, L) \big)
$$
for $0 \leq i \leq k+1$ and $1 \leq \ell \leq k+1$.
\end{theorem}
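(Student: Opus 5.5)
The plan is to reduce to a product via Lemma \ref{lem:sigma_*N}, compute by Künneth, and then specialize $M=\sO_C$ for the second formula. By Lemma \ref{lem:sigma_*N}, $\wedge^{\ell} E_{k+1,L} = \sigma_*(\sO_{C_{k+1-\ell}} \boxtimes N_{\ell,L})$ with $\sigma=\sigma_{k+1-\ell,\ell}$ finite. The projection formula together with $\sigma^*T_{k+1}(M)=T_{k+1-\ell}(M)\boxtimes T_\ell(M)$ then gives
$$
\wedge^{\ell} E_{k+1,L}\otimes T_{k+1}(M) = \sigma_*\big(T_{k+1-\ell}(M)\boxtimes (N_{\ell,L}\otimes T_\ell(M))\big).
$$
Since $\sigma$ is finite, higher direct images vanish, and so
$$
H^i(C_{k+1}, \wedge^{\ell} E_{k+1,L}\otimes T_{k+1}(M)) = H^i\big(C_{k+1-\ell}\times C_\ell,\; T_{k+1-\ell}(M)\boxtimes (N_{\ell,L}\otimes T_\ell(M))\big).
$$

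The next step is to identify $N_{\ell,L}\otimes T_\ell(M)$ with $N_{\ell,L\otimes M}$. Since $N_{\ell,L}=T_\ell(L)(-\delta_\ell)$ and $T_\ell$ is multiplicative ($T_\ell(L)\otimes T_\ell(M)=T_\ell(L\otimes M)$, checked by pulling back to $C^\ell$ where it is the box product identity), we get $N_{\ell,L}\otimes T_\ell(M)=T_\ell(L\otimes M)(-\delta_\ell)=N_{\ell,L\otimes M}$. This is also the identification already used in the proof of Lemma \ref{lem:sigma_*N}.

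Applying the Künneth formula, the group becomes $\bigoplus_{p+q=i} H^p(C_{k+1-\ell},T_{k+1-\ell}(M))\otimes H^q(C_\ell, N_{\ell,L\otimes M})$. By Lemma \ref{lm:03}, the first factor is $S^{k+1-\ell-p}H^0(M)\otimes\wedge^pH^1(M)$ and the second is $\wedge^{\ell-q}H^0(L\otimes M)\otimes S^qH^1(L\otimes M)$. Rearranging gives exactly the stated formula; terms with negative exponents are zero by convention. When $\ell=k+1$ the factor $C_0$ is a point, and the formula still holds with $T_0(M)=\sO$, $S^{-p}=0$ for $p>0$.

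For the special case, set $M=\sO_C$: then $T_{k+1}(\sO_C)=\sO_{C_{k+1}}$ and $H^0(\sO_C)=\Bbbk$, so $S^{k+1-\ell-p}H^0(\sO_C)=\Bbbk$ whenever $p\le k+1-\ell$ and is zero otherwise. The one point needing care is that the range of $p$ is automatically restricted, since $\wedge^pH^1(\sO_C)$ arises from $H^p(C_{k+1-\ell},\sO)$, which vanishes for $p>k+1-\ell$. So the stated special formula should be read with that implicit range. The main obstacle is only the bookkeeping in the identification $N_{\ell,L}\otimes T_\ell(M)=N_{\ell,L\otimes M}$ and in the edge case $\ell=k+1$; everything else is formal.
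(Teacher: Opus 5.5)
Your argument is correct and is the paper's proof: Lemma \ref{lem:sigma_*N} with the projection formula (using $\sigma^*T_{k+1}(M)=T_{k+1-\ell}(M)\boxtimes T_\ell(M)$ and $N_{\ell,L}\otimes T_\ell(M)=N_{\ell,L\otimes M}$), then finiteness of $\sigma$, K\"{u}nneth, and Lemma \ref{lm:03}. Your caveat on the special case is also right: with $M=\sO_C$ the factor $S^{k+1-\ell-p}H^0(C,\sO_C)$ vanishes for $p>k+1-\ell$, so the paper's second display must be read with $p\le k+1-\ell$; otherwise spurious summands such as $H^1(C,\sO_C)\otimes\wedge^{k+1}H^0(C,L)$ would appear in $H^1(C_{k+1},N_{k+1,L})$.
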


\begin{proof}
Note that $\sigma^* T_{k+1}(M) = T_{k+1-\ell}(M) \boxtimes T_{\ell}(M)$. 
By Lemma \ref{lem:sigma_*N}, we have
$$
\wedge^{\ell} E_{k+1,L} \otimes T_{k+1}(M) = \sigma_* (T_{k+1-\ell}(M) \boxtimes N_{\ell, L \otimes M}). 
$$
The assertion follows from K\"{u}nneth formula and Lemma \ref{lm:03}.
\end{proof}

We remark that we do not impose any conditions on the line bundles $L$ and $M$ for the above theorem.

\appendix
\section{Cohomological approach to Cohen--Macaulayness}\label{sec:appendix}



\noindent In this appendix, we give a complete proof of \cite[Theorem 5.8]{ENP}, which asserts that the $k$-th secant variety $\Sigma_k \subseteq \nP^r = \nP H^0(L)$ is arithmetically Cohen--Macaulay whenever $\deg L \geq 2g+2k+1$. We closely follow the cohomological approach of \cite{ENP}, but we do not quote \cite[Theorem 10.42]{K}. Of course, we already gave an alternative way to show this result in Section \ref{sec:tangentcones} (see Corollary \ref{cor:Sigma=aCM}). But it would be interesting to see both methods work for the result.

\begin{theorem}\label{p:41} 
Suppose that $\deg L\geq 2g+2k+1$ and consider the exact triangle
		\begin{equation*}
		\omega^\bullet_{\Sigma_{k-1}}\longrightarrow \omega^\bullet_{\Sigma_k}\longrightarrow R\beta_{k,*}\omega_{B^k(L)}(Z_{k-1})[2k+1]\stackrel{+1}{\longrightarrow}
	\end{equation*} 
in the derived category of the $k$-th secant variety $\Sigma_k=\Sigma_k(C, L)$.
	\begin{enumerate}
		\item The induced map on the $(-2k-1)$-th cohomology sheaves $$\omega_{\Sigma_k}\longrightarrow \beta_{k,*}\omega_{B^k(L)}(Z_{k-1})$$ is an isomorphism.
		\item The connection map on the  cohomology sheaves $$R^1\beta_{k,*}\omega_{B^k(L)}(Z_{k-1})\longrightarrow \omega_{\Sigma_{k-1}}$$ is an isomorphism.
		\item $R^i\beta_{k,*}\omega_{B^k(L)}(Z_{k-1})=0$, for $i\geq 2$.
	\end{enumerate}
	In particular, $H^{-i}(\omega^\bullet_{\Sigma_k})=0$ for $i\neq 2k+1$. Therefore $\Sigma_k$ is Cohen--Macaulay, and as a consequence, $\Sigma_k \subseteq \nP^r = \nP H^0(L)$ is arithmetically Cohen--Macaulay.
\end{theorem}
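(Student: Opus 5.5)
We argue by induction on $k$, the case $k=0$ being trivial since $\Sigma_0=C$ is a smooth curve. We assume the statement for all $\Sigma_m(\sL)$ with $m<k$ and $\deg\sL\geq 2g+2m+1$. In particular $\Sigma_{k-1}$ is Cohen--Macaulay, so $\omega^\bullet_{\Sigma_{k-1}}=\omega_{\Sigma_{k-1}}[2k-1]$ is concentrated in degree $-(2k-1)$.

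Taking cohomology sheaves of the triangle (\ref{eq:tri}) then gives an exact sequence
$$0\to H^{-2k-1}(\omega^\bullet_{\Sigma_k})\to \beta_{k,*}\omega_{B^k}(Z_{k-1})\to 0\to H^{-2k}(\omega^\bullet_{\Sigma_k})\to R^1\beta_{k,*}\omega_{B^k}(Z_{k-1})\to \omega_{\Sigma_{k-1}}\to H^{-2k+1}(\omega^\bullet_{\Sigma_k})\to R^2\beta_{k,*}\omega_{B^k}(Z_{k-1})\to 0,$$
together with isomorphisms $H^{-2k-1+i}(\omega^\bullet_{\Sigma_k})\cong R^i\beta_{k,*}\omega_{B^k}(Z_{k-1})$ for $i\geq 3$.

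Statement (1) is immediate from this sequence. Since $H^{-j}(\omega^\bullet_{\Sigma_k})=0$ for $j<\dim\Sigma_{k}$ codimension-wise (more precisely, $\Sigma_k$ is normal and hence $S_2$), the sheaves $H^{-2k}$ and $H^{-2k+1}$ are supported in codimension $\geq 3$. In particular they cannot have support containing $\Sigma_{k-1}$, which has codimension $2$. This observation will be used in the final step.

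For (3), we compute $R^i\beta_{k,*}\omega_{B^k}(Z_{k-1})=R^i\beta_{k,*}\pi_k^*T_{k+1}(\omega_C)$ via the formal function theorem along a fibre $F=F_x\cong C_{k-m}$, as in the proof of Theorem \ref{thm:cohomologysecantsheaf}. The graded pieces are
$$H^i\bigl(F,S^\ell N^*_{F/B^k}\otimes T_{k+1}(\omega_C)|_F\bigr).$$
Here $T_{k+1}(\omega_C)|_F=T_{k-m}(\omega_C)$ up to a trivial twist, and
$$S^\ell N^*_{F/B^k}=\bigoplus_j S^{\ell-j}(\sO^{2m+1})\otimes S^jE_{k-m,L(-2\xi)}.$$
So we need $H^i(C_{k-m},S^jE_{k-m,L'}\otimes T_{k-m}(\omega_C))$ with $L'=L(-2\xi)$ of degree $\geq 2g+2(k-m-1)+1$. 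By Proposition \ref{p:21}(2), this equals $H^{2(k-m-1)+1-i}(\Sigma_{k-m-1}(L'),I(-j))^*$. The induction hypothesis (arithmetic Cohen--Macaulayness of $\Sigma_{k-m-1}(L')$ and of $\Sigma_{k-m-2}(L')$, plus the description of $H^{top}$) should give vanishing of these for $i\geq 2$ and all $j\geq0$. We then conclude by the Mittag-Leffler argument that $R^i=0$ for $i\geq 2$.

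For (2), the same computation for $i=1$ identifies $H^1$ of the graded pieces with duals of $H^{top-1}$ of the ideal sheaf. These are governed by the map $\tau$ of Lemma \ref{lem:CM=>aCM} and match the graded pieces of $\omega_{\Sigma_{k-1}}$ at $x$, computed through $G_x\subseteq B^{k-1}$. Rather than checking compatibility of the maps directly, one can argue as follows. By (3), $R^2=0$, so the sequence ends with $\omega_{\Sigma_{k-1}}\to H^{-2k+1}\to 0$. The sheaf $\omega_{\Sigma_{k-1}}$ is torsion free of rank one on $\Sigma_{k-1}$, so its quotient $H^{-2k+1}$, supported in codimension $\geq 3$ in $\Sigma_k$, i.e. a proper closed subset of $\Sigma_{k-1}$, must vanish provided the map $R^1\to\omega_{\Sigma_{k-1}}$ is nonzero on each component, equivalently generically an isomorphism. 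At a general point of $\Sigma_{k-1}$ (so $m=k-1$, $F\cong C$), we have $R^1\beta_*\pi^*T_{k+1}(\omega_C)$ of rank $h^1(C,\omega_C)=1$, and the map is checked to be an isomorphism there. Once $H^{-2k+1}=0$, the map $R^1\to\omega_{\Sigma_{k-1}}$ is surjective. Since $H^{-2k}$ is torsion (supported in high codimension) and injects into $R^1$, and $R^1$ is torsion-free, being identified via formal functions with a torsion-free rank one sheaf on $\Sigma_{k-1}$, we get $H^{-2k}=0$, which gives (2).

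Combining (1)–(3), $\omega^\bullet_{\Sigma_k}$ is concentrated in degree $-2k-1$, so $\Sigma_k$ is Cohen--Macaulay, and Lemma \ref{lem:CM=>aCM} gives arithmetic Cohen--Macaulayness. The main obstacle is the vanishing in (3) for all $j$ and the exact identification of $R^1$ in (2). This is where a careful induction over the strata $m$, with the correct degree bookkeeping for $L(-2\xi)$, is required.
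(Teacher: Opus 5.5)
\textbf{Parts (1) and (3) follow the paper; part (2) has a genuine gap.} Your derivation of (1) from the long exact sequence is correct. So are your $S_2$ support bounds for $H^{-2k}(\omega^\bullet_{\Sigma_k})$ and $H^{-2k+1}(\omega^\bullet_{\Sigma_k})$. Your treatment of (3) is essentially the paper's: formal functions along $F\cong C_{k-m}$, the splitting of $N^*_F$, Proposition \ref{p:21}(2), and inductive arithmetic Cohen--Macaulayness of $\Sigma_{k-m-1}(L(-2\xi))$ and $\Sigma_{k-m-2}(L(-2\xi))$.

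\textbf{The surjectivity step fails.} You argue that $\omega_{\Sigma_{k-1}}$ is torsion free, so its quotient $H^{-2k+1}(\omega^\bullet_{\Sigma_k})$, supported in high codimension, must vanish once $\theta$ is generically an isomorphism. This is false: torsion-freeness restricts subsheaves, not quotients. For a closed point $y$, the inclusion $\mf{m}_y\,\omega_{\Sigma_{k-1}}\hookrightarrow\omega_{\Sigma_{k-1}}$ is generically an isomorphism, yet its cokernel $\omega_{\Sigma_{k-1}}\otimes\Bbbk(y)$ is nonzero. So a check at a general point of $\Sigma_{k-1}$, combined with $\dim\Supp H^{-2k+1}(\omega^\bullet_{\Sigma_k})\le 2k-3$, says nothing outside a dense open subset. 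In particular it says nothing at the deeper strata $x\in\Sigma_m\setminus\Sigma_{m-1}$ with $m\le k-2$, which is where all the content of (2), and of Cohen--Macaulayness, lies.

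\textbf{The injectivity step is circular.} To kill $H^{-2k}(\omega^\bullet_{\Sigma_k})$ you use that $R^1\beta_{k,*}\omega_{B^k}(Z_{k-1})$ is torsion free on $\Sigma_{k-1}$, ``being identified via formal functions'' with a torsion-free sheaf. You never construct that identification. The graded pieces $H^1(F,S^\ell N^*_F\otimes\cdots)$ alone determine neither $\varprojlim_n H^1(F_n,\cdot)$ (connecting maps from $H^0$ intervene) nor any module structure. Torsion-freeness of $R^1$ is essentially the conclusion $R^1\cong\omega_{\Sigma_{k-1}}$ itself.

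\textbf{What is missing is a comparison map constructed at every point, which the paper supplies.}
\begin{itemize}
\item Use the finite morphism $\alpha_{k,k-1}\colon B^{k-1}(L)\times C\to Z_{k-1}$, which satisfies $\alpha_{k,k-1}^*\omega_{B^k}(Z_{k-1})=\omega_{B^{k-1}}(Z_{k-2})\boxtimes\omega_C$ (Lemma \ref{lm:01}).
\item Restrict to thickenings $F'_n\times C\to F_n$, where $F'=\beta_{k-1}^{-1}(x)$. K\"unneth then gives compatible maps $c_n\colon H^1(F_n,\omega_{B^k}(Z_{k-1})_n)\to H^0(F'_n,\omega_{B^{k-1}}(Z_{k-2})_n)$, which the paper uses to compute $\hat\theta_x$.
\item The map $c_1\colon H^1(C_{k-m},T_{k-m}(\omega_C))\to H^0(C_{k-m-1},T_{k-m-1}(\omega_C))$ is an isomorphism, since both sides are $S^{k-m-1}H^0(\omega_C)$ by Lemma \ref{lm:03}.
\item The graded maps $a_n$ are isomorphisms. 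Both sides reduce to $H^0(\Sigma_{k-m-2}(L(-2\xi)),\omega_{\Sigma_{k-m-2}}(\ell))$, using the inductive hypothesis (including part (1) for $\Sigma_{k-m-2}(L(-2\xi))$) and Serre duality.
\item A five-lemma induction on $n$ then shows every $c_n$ is an isomorphism. It uses the vanishing of $H^2$ of the graded pieces from (3) on the top row, and left exactness of $H^0$ on the bottom row.
\end{itemize}
You had the graded-piece matching in hand but explicitly declined to build the comparison map. Without it, or some genuinely new input such as a depth or torsion-freeness theorem for $R^1\beta_{k,*}\omega_{B^k}(Z_{k-1})$, neither (2) nor Cohen--Macaulayness follows.
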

\begin{proof} 
	We prove the theorem by induction on $k$. If $k=0$, then the zero-th secant variety is exactly the curve $C$ itself and $B^0(L)=C$ so the map $\beta_0$ is just the embedding of $C$ into $\nP^r$. The results (1), (2) and (3) follow immediately. The curve $C$ is arithmetically Cohen--Macaulay since $\deg L\geq 2g+1$. 
	
	Next, we assume $k\geq 1$ and assume that for any secant variety $\Sigma_m(\sL)$ with $m\leq k-1$ associated to a line bundle $\sL$ of degree $\geq 2g+2m+1$, the proposition holds. Now we consider the secant variety $\Sigma_k$ embedded by the line bundle $L$ of $\deg L\geq 2g+2k+1$. Since $\dim\Sigma_{k-1}=2k-1$ and $\Sigma_{k-1}$ is Cohen--Macaulay by the induction hypothesis, we have  $h^i(\omega^\bullet_{\Sigma_{k-1}})=0$ if $i\neq -2k+1$ and $h^{-2k+1}(\omega^\bullet_{\Sigma_{k-1}})=\omega_{\Sigma_{k-1}}$.  We take $(-2k-1)-$th cohomology of the triangle  (\ref{eq:tri}) to get
	$$0\longrightarrow H^{-2k-1}(\omega^\bullet_{\Sigma_k})\longrightarrow H^{-2k-1}(R\beta_{k,*}\omega_{B^k(L)}(Z_{k-1}))\longrightarrow 0,$$
	which gives the isomorphism in (1) that 
	$$\omega_{\Sigma_{k}}\cong \beta_*\omega_{B^k(L)}(Z_{k-1}).$$
	(Here recall $\omega_{\Sigma_{k}}=H^{-2k-1}(\omega^\bullet_{\Sigma_k})$ by definition.)
	We take $(-2k)-$th cohomology of the triangle (\ref{eq:tri}) to get	
	$$0\longrightarrow H^{-2k}(\omega^\bullet_{\Sigma_k})\longrightarrow R^1\beta_{k,*}\omega_{B^k(L)}(Z_{k-1})\longrightarrow \omega_{\Sigma_{k-1}}\longrightarrow H^{-2k+1}(\omega^\bullet_{\Sigma_k})\longrightarrow \cdots.$$ 
	So we obtain a natural connection morphism 
	$$\theta \colon R^1\beta_{k,*}\omega_{B^k(L)}(Z_{k-1})\longrightarrow \omega_{\Sigma_{k-1}}$$
	claimed in (2).
	
	
	In the sequel, we shall use the formal function theorem to establish the isomorphism of $\theta$ in (2) and the statement (3). To this end, we work at a closed point $x\in \Sigma_{k}$. As $\beta_k$ is an isomorphism over the open set $\Sigma_{k} \setminus \Sigma_{k-1}$, we may only consider the point $x\in \Sigma_{k-1}$. Then we must have $x \in \Sigma_m \setminus \Sigma_{m-1}$ for some  $0 \leq m \leq k-1$. Furthermore there is an effective divisor $\xi$ on $C$ of degree $m+1$, i.e., $\xi \in C_{m+1}$, uniquely determined by $x$ such that  $x$ lies in the linear span by $\xi$ in $\nP^r$.  Note that $\deg L(-2\xi) \geq 2g+2(k-m-1)+1$.
	
	Let $F:=\beta_k^{-1}(x)\cong C_{k-m}$ be the fiber of $\beta_k$ over the point $x$ and let $I_F$ be the defining ideal of $F$ in $B^k(L)$. For $n\geq 1$, write $F_n$ for the thickening fibers of $F$, i.e., $F_n$ is a subscheme of $B^k(L)$ defined by the ideal sheaf $I^n_{F}$. We write the sheaves 
	$$\omega_{B^k}(Z_{k-1})_n:=\omega_{B^k}(Z_{k-1})\otimes \sO_{F_n}.$$
	Note that $$\omega_{B^k}(Z_{k-1})_1=\omega_{B^k}(Z_{k-1})\otimes \sO_{F}=T_{k-m}(\omega_C).$$
	By \cite[Proposition 3.13]{ENP}, the conormal sheaf of $F$ inside $B^k(L)$ is 
	\begin{equation}\label{eq:12}
		N_{F/B^k(L)}^{*} =I_F/I_F^2= \sO_F^{\oplus 2m+1} \oplus E_{k-m, L(-2\xi)}.
	\end{equation}
	As $I_F^{\ell}/I_F^{\ell+1} = S^\ell N_{F/B^k(L)}^{*}$, we have a short exact sequence 
	\begin{equation}\label{eq:11}
		\xymatrix{
			0\ar[r]& S^nN^*_F\otimes \omega_{B^k}(Z_{k-1}) \ar[r]  & \omega_{B^k}(Z_{k-1})_{n+1} \ar[r] & \omega_{B^k}(Z_{k-1})_n \ar[r]& 0
		}
	\end{equation}

	Let us first show the statement (3). For $i\geq 2$, the formal function theorem says that the completion of the sheaf $R^i \beta_{k,*} \omega_{B^k(L)}(Z_{k-1})$ at $x$ equals
	$$\varprojlim_n H^i(F_n,  \omega_{B^k(L)}(Z_{k-1})_n).
	$$
	Thus in order to show the vanishing $R^i \beta_{k,*} \omega_{B^k(L)}(Z_{k-1}) = 0$ it is enough to show 
	\begin{equation}\label{eq:13}
		H^i(F_n, \omega_{B^k(L)}(Z_{k-1})_n)=0, \text{ for } i\geq 2.
	\end{equation}
	For this, we do the induction on $n$. So when $n=1$, we have	 
	$$H^i(F_1, \omega_{B^k(L)}(Z_{k-1})_1)=H^i(C_{k-m}, T_{k-m}(\omega_C))=0, \text{ for }i\geq 2,$$
	by Lemma \ref{lm:03}. To proceed for arbitrary $n$, we use the short exact sequence  (\ref{eq:11}) and it suffices to show 
	\begin{equation}\label{eq:14}
		H^i(S^nN^*_F\otimes \omega_{B^k}(Z_{k-1}))=0, \text{  for } i\geq 2,\ n\geq 1.
	\end{equation}
	But because of (\ref{eq:12}), this can be further reduced to show 
	$$
	H^i(C_{k-m}, S^{\ell} E_{k-m, L(-2\xi)} \otimes T_{k-m}(\omega_C))=0~~\text{ for $i \geq 2, \ \ell\geq 1$}.
	$$
	
	To this end, we consider secant varieties 
	\begin{equation}\label{eq:06}
		\Sigma_{k-m-1}:=\Sigma_{k-m-1}(L(-2\xi)) \text{ and }\Sigma_{k-m-2}:=\Sigma_{k-m-2}(L(-2\xi))
	\end{equation}
	in the space $\nP(H^0(L(-2\xi)))$ associated to the curve $C$ embedded by the line bundle $L(-2\xi)$. As $\deg L(-2\xi) \geq 2g+2(k-m-1)+1$,  the inductive hypothesis applies and the proposition holds for both $\Sigma_{k-m-1}$ and $\Sigma_{k-m-2}$. In particular they both are arithmetical Cohen--Macaulay and therefore we  have the vanishing  for all $\ell\in \nZ$, $$H^i(\Sigma_{k-m-1}, \sO_{\Sigma_{k-m-1}}(\ell))=0 \text{ for } 1\leq i\leq 2(k-m-1), \text{ and }$$ 
	$$H^i(\Sigma_{k-m-2}, \sO_{\Sigma_{k-m-2}}(\ell))=0 \text{ for } 1\leq i\leq 2(k-m-2).$$
	Chasing through the short exact sequence 
	$$0\longrightarrow I_{\Sigma_{k-m-2}|\Sigma_{k-m-1}}\longrightarrow \sO_{\Sigma_{k-m-1}}\longrightarrow \sO_{\Sigma_{k-m-2}}\longrightarrow 0$$ 
	we obtain that for all $\ell\in \nZ$
	\begin{equation}\label{eq:05}
		H^{2(k-m-1)+1-i}(\Sigma_{k-m-1}, I_{\Sigma_{k-m-2}|\Sigma_{k-m-1}}(-\ell))^{*} = 0, \text{ for } i \geq 2, \text{ and }
	\end{equation}
	\begin{equation}\label{eq:07}
		H^{2(k-m-1)}(\Sigma_{k-m-1}, I_{\Sigma_{k-m-2}|\Sigma_{k-m-1}}(-\ell))^{*}=H^{2(k-m-1)-1}(\Sigma_{k-m-2}, \sO_{\Sigma_{k-m-2}}(-\ell))^{*}.
	\end{equation}
	Now with Lemma \ref{lm:01}(1) and the vanishing (\ref{eq:05}), we conclude 
	$$
	H^i(C_{k-m}, S^{\ell} E_{k-m, L(-2\xi)} \otimes T_{k-m}(\omega_C)) = H^{2(k-m-1)+1-i}(\Sigma_{k-m-1}, I_{\Sigma_{k-m-2}|\Sigma_{k-m-1}}(-\ell))^{*} = 0
	$$
	for $i \geq 2$ and $\ell\geq 1$, which completes the proof of (3).

	Next we prove the statement (2). Consider the commutative diagram 	
\begin{equation}\label{eq:01}
	\begin{split}
		\xymatrixcolsep{0.7in}
	\xymatrix{
		B^{k-1}(L)\times C \ar[r]^-{\alpha_{k,k-1}} \ar[d]_-{\pr_1} & Z_{k-1} \ar[d]^-{\beta_k} \\
		B^{k-1}(L) \ar[r]_-{\beta_{k-1}} & \Sigma_{k-1}.
	}
\end{split}
\end{equation}
Let $F':=\beta_{k-1}^{-1}(x)$ and let $F'_n$ be the thickening fibers defined by the $n$-th power of the ideal sheaf of $F'$ in $B^{k-1}(L)$. We write the sheaf 
	$$\omega_{B^{k-1}}(Z_{k-2})_n:=\omega_{B^{k-1}}(Z_{k-2})\otimes \sO_{F'_n}$$
	for the restriction of $\omega_{B^{k-1}}(Z_{k-2})$ on $F'_n$.	Let $N^*_{F'}=I_{F'}/I_{F'}$ be the conormal sheaf of $F'$ inside $B^{k-1}(L)$. Let $x_n$ be the fat point defined by the $n$-th power of the defining ideal of the point $x$ in $\Sigma_{k}$. Then induced from (\ref{eq:01}), we have a commutative diagram 
	$$
	\xymatrixcolsep{0.7in}
	\xymatrix{
		F'_n\times C \ar[r]^-{\alpha_{n}} \ar[d]_-{\pr_1} & F_n \ar[d] \\
		F'_n \ar[r] & x_n.
	}
	$$
	in which $\alpha_n$ is finite by Lemma \ref{lm:01}(2). As $\alpha^*_{k,k-1}\omega_{B^k}(Z_{k-1})=\omega_{B^{k-1}}(Z_{k-2})\boxtimes \omega_C,$
	we see that 
	$$\alpha^*\omega_{B^k}(Z_{k-1})_n=\omega_{B^{k-1}}(Z_{k-2})_n\boxtimes \omega_C,$$
	and thus there is a natural map 
	$$H^i(\omega_{B^k}(Z_{k-1})_n)\longrightarrow H^i(\omega_{B^{k-1}}(Z_{k-2})_n\boxtimes \omega_C).$$
	Using K\"{u}nneth formula for $H^i(\omega_{B^{k-1}}(Z_{k-2})_n\boxtimes \omega_C)$, we get a natural map 
	$$H^i(\omega_{B^k}(Z_{k-1})_n)\longrightarrow  H^{i-1}(\omega_{B^{k-1}}(Z_{k-2})_n)\otimes H^1(\omega_C)=H^{i-1}(\omega_{B^{k-1}}(Z_{k-2})_n).$$
	Here we only need these natural maps when $i=1$. To put them in the inverse systems, we pull the short exact sequence 
	$$0\longrightarrow S^nN^*_{F'}\otimes \omega_{B^{k-1}}(Z_{k-2})\longrightarrow \omega_{B^{k-1}}(Z_{k-2})_{n+1}\longrightarrow\omega_{B^{k-1}}(Z_{k-2})_n\longrightarrow 0 $$
	to $B^{k-1}\times C$ by $pr_1$ and then tensor with $pr^*_2\omega_C$ to get
	$$0\longrightarrow S^nN^*_{F'}\otimes \omega_{B^{k-1}}(Z_{k-2})\boxtimes \omega_C\longrightarrow \omega_{B^{k-1}}(Z_{k-2})_{n+1}\boxtimes \omega_C\longrightarrow \omega_{B^{k-1}}(Z_{k-2})_n\boxtimes \omega_C\longrightarrow 0.$$
	Since the left-hand-side two sheaves are $\alpha^*\omega_{B^k}(Z_{k-1})_{n+1}$ and $\alpha^*\omega_{B^k}(Z_{k-1})_n$, we push the above short exact sequence down to $B^k(L)$ by $\alpha$  to get a commutative diagram 
	$$\tiny\xymatrix{
		0\ar[r]& S^nN^*_F\otimes \omega_{B^k}(Z_{k-1}) \ar[r] \ar[d] & \omega_{B^k}(Z_{k-1})_{n+1} \ar[r]\ar[d] & \omega_{B^k}(Z_{k-1})_n \ar[r]\ar[d]& 0\\
		0 \ar[r] &	\alpha_*(S^nN^*_{F'}\otimes \omega_{B^{k-1}}(Z_{k-2})\boxtimes \omega_C) \ar[r] &  \alpha_*(\omega_{B^{k-1}}(Z_{k-2})_{n+1}\boxtimes \omega_C) \ar[r] & \alpha_*(\omega_{B^{k-1}}(Z_{k-2})_n\boxtimes \omega_C)\ar[r] & 0
	}$$
	Taking cohomology and using K\"{u}nneth formula, we then obtain 
	\begin{equation}\label{eq:02}
		\begin{split}\tiny
		\xymatrix{
			\ar[r]& H^1(S^nN^*_F\otimes \omega_{B^k}(Z_{k-1}) )\ar[r] \ar[d]_-{a_n} &H^1( \omega_{B^k}(Z_{k-1})_{n+1}) \ar[r]\ar[d]_{c_{n+1}} & H^1(\omega_{B^k}(Z_{k-1})_n) \ar[r]\ar[d]_{c_n}& 0\\
			0 \ar[r] &	H^0(S^nN^*_{F'}\otimes \omega_{B^{k-1}}(Z_{k-2})) \ar[r] &  H^0(\omega_{B^{k-1}}(Z_{k-2})_{n+1}) \ar[r] & H^0(\omega_{B^{k-1}}(Z_{k-2})_n)\ar[r] & 
		}
		\end{split}		
	\end{equation}
	in which recall that  we knew in (\ref{eq:14}) that $H^i(S^nN^*\otimes \omega_{B^k}(Z_{k-1}) )=0$ for $i\geq 2$. 
	
	In order to show the natural map $\theta$ is isomorphic, we use the formal function theorem to show the induced map on the completion at the point $x$
	$$\hat{\theta}_x:
	\varprojlim H^1(F_n, \omega_{B^k(L)}(Z_{k-1})_n) \longrightarrow \varprojlim H^0(F'_n, \omega_{B^{k-1}(L)}(Z_{k-2})_n) 
	$$
	is an isomorphism. It is then enough to show the map
	$$c_n:H^1(\omega_{B^k}(Z_{k-1})_n)\longrightarrow H^0(\omega_{B^{k-1}}(Z_{k-2})_n)$$
	is an isomorphism for $n\geq 1$. Using  the diagram (\ref{eq:02}) above and proceeding by induction on $n$, we reduce to show the map  $c_1$ and the map $a_n$ for $n\geq 1$ are isomorphism. Notice first that 
	$$\omega_{B^k(L)}(Z_{k-1})_1=T_{k-m}(\omega_C)\text{ and }\omega_{B^{k-1}(L)}(Z_{k-2})_1=T_{k-1-m}(\omega_C).$$
	So the map $c_1$ turns out to be 
	$$c_1:H^1(C_{k-m,}T_{k-m}(\omega_C))\longrightarrow H^0(C_{k-1-m}, T_{k-1-m}(\omega_C))$$
	which is an isomorphism since both $H^1$ and $H^0$ groups involved are the same as  $S^{k-m-1}H^0(\omega_C)$ by Lemma \ref{lm:03}. Finally, to show the map $a_n$ with $n\geq 1$ is isomorphism, we notice that 
	$$N^*_F=\sO_F^{\oplus 2m+1}\oplus E_{k-m,L(-2\xi)}, \text{ and } N^*_{F'}=\sO_{F'}^{\oplus 2m+1}\oplus E_{k-1-m,L(2\xi)}.$$
	So it is enough to show the the isomorphism 
	\begin{equation}\label{eq:03}
			H^1(C_{k-m}, S^{\ell} E_{k-m, L(-2\xi)} \otimes T_{k-m}(\omega_C)) \cong H^0(C_{k-m-1}, S^{\ell} E_{k-m-1, L(-2\xi)} \otimes T_{k-m-1}( \omega_C))
	\end{equation}
	for all $\ell \geq 0$. To this end, once again we consider secant varieties (\ref{eq:06}) in the space $\nP(H^0(L(-2\xi)))$. We show that the spaces in (\ref{eq:03}) are the same as $ H^0(\Sigma_{k-m-2}, \omega_{\Sigma_{k-m-2}}(\ell))$. Indeed, on one hand, recall $B^{k-m-2}:=B^{k-m-2}(L(-2\xi))$ is $\nP(E_{k-m-1,L(-2\xi)})$ over $C_{k-m-1}$ by construction so we have 
	\begin{equation}\label{eq:04}\small
		H^0(C_{k-m-1}, S^{\ell} E_{k-m-1, L(-2\xi)} \otimes T_{k-m-1}(\omega_C))= H^0(B^{k-m-2}, \omega_{B^{k-m-2}}(Z_{k-m-3}) \otimes H_{k-m-2}^{\ell}),
	\end{equation}
	where $H$ is the tautological bundle of $B^{k-m-2}$. We also have a birational map
	$$\beta_{k-m-2}:B^{k-m-2}\longrightarrow \Sigma_{k-m-2}$$
	which satisfies $\beta_{k-m-2,*}\omega_{B^{k-m-2}}(Z_{k-m-3})=\omega_{\Sigma_{k-m-2}}$ by the inductive hypothesis. So we have 
	$$H^0(B^{k-m-2}, \omega_{B^{k-m-2}}(Z_{k-m-3}) \otimes H_{k-m-2}^{\ell})= H^0(\Sigma_{k-m-2}, \omega_{\Sigma_{k-m-2}}(\ell))$$
	and together with (\ref{eq:04}) we obtain 
	$$H^0(C_{k-m-1}, S^{\ell} E_{k-m-1, L(-2\xi)} \otimes T_{k-m-1}(\omega_C))=H^0(\Sigma_{k-m-2}, \omega_{\Sigma_{k-m-2}}(\ell)).$$
	On the other hand, using Serre's duality we have 
	$$H^{2(k-m-1)-1}(\Sigma_{k-m-2}, \sO_{\Sigma_{k-m-2}}(-\ell))^{*}=H^0(\Sigma_{k-m-2}, \omega_{\Sigma_{k-m-2}}(\ell)).$$
	Then it follows from Lemma \ref{lm:01}(1) and (\ref{eq:07}) that 
	$$H^1(C_{k-m}, S^{\ell} E_{k-m, L(-2\xi)} \otimes T_{k-m}(\omega_C))=H^0(\Sigma_{k-m-2}, \omega_{\Sigma_{k-m-2}}(\ell)),$$
	which finishes the proof of (\ref{eq:03}) and completes the proof of (2).
	
	Using (1), (2), (3) and chasing trough the exact triangle (\ref{eq:tri}), we obtain immediately that  $H^{-i}(\omega^\bullet_{\Sigma_k})=0$ for $i\neq 2k+1$. Therefore $\Sigma_k$ is Cohen--Macaulay. Finally, by Lemma \ref{lem:CM=>aCM}, we conclude that $\Sigma_k$ is arithmetially Cohen--Macaulay. 
\end{proof}

\begin{remark}
By the same way as in Section \ref{sec:cohomology}, we can compute the cohomology group $H^i(C_{k+1}, S^{\ell} E_{k+1, L} \otimes T_{k+1}(\omega_C))$ for $\ell > 0$. First, notice that
$$
H^i(C_{k+1}, S^{\ell} E_{k+1, L} \otimes T_{k+1}(\omega_C)) = H^i(B^k(L), \sO_{B^k(L)}(\ell) \otimes \omega_{B^k(L)}(Z_{k-1})).
$$
Theorem \ref{p:41} says that 
$$
R^i \beta_{k,*} \omega_{B^k(L)}(Z_{k-1}) = \begin{cases} \omega_{\Sigma_k} & \text{for $i=0$} \\ \omega_{\Sigma_{k-1}} & \text{for $i=1$} \\ 0 & \text{for $i \geq 2$}. \end{cases}
$$
As $\Sigma_k$ is arithmetically Cohen--Macaulay, we have $H^j(\Sigma_k, \omega_{\Sigma_k}(\ell))=0$ for $j>0$ and $\ell > 0$. Considering the Leray spectral sequence, we obtain
$$
H^i(B^k(L), \sO_{B^k(L)}(\ell) \otimes \omega_{B^k(L)}(Z_{k-1})) = \begin{cases} 
	H^0(\Sigma_k, \omega_{\Sigma_k}(\ell)) & \text{for $i=0$}\\
	H^0(\Sigma_{k-1}, \omega_{\Sigma_{k-1}}(\ell)) & \text{for $i=1$}\\
	0 & \text{for $i \geq 2$}.
\end{cases}
$$
By Serre duality, $H^0(\Sigma_k, \omega_{\Sigma_k}(\ell)) = H^{2k+1}(\Sigma_k, \sO_{\Sigma_k}(-\ell))^{*}$, so $h^0(\Sigma_k, \omega_{\Sigma_k}(\ell)) = -\chi(\sO_{\Sigma_k}(-\ell))$. Similarly, we have $h^0(\Sigma_{k-1}, \omega_{\Sigma_{k-1}}(\ell)) = -\chi(\sO_{\Sigma_{k-1}}(-\ell))$. 
\end{remark}

\bibliographystyle{alpha}
\bibliographystyle{alpha}

\end{document}